\documentclass[a4paper,10pt]{article}
\usepackage[T1]{fontenc}
\usepackage[cp1250]{inputenc}
\usepackage[english]{babel}
\usepackage{amsmath}
\usepackage{amsfonts}
\usepackage{amssymb}
\usepackage{mathrsfs}
\usepackage{amsthm}
\usepackage{graphicx}

\usepackage{color}
\usepackage{euscript}

\DeclareMathOperator{\dist}{dist}
\DeclareMathOperator{\BV}{\mathrm{BV}}
\newcommand{\BMO}{\mathrm{BMO}}
\newcommand{\eps}{\varepsilon}

\DeclareMathOperator{\E}{\mathbb{E}}
\DeclareMathOperator{\I}{I}
\DeclareMathOperator{\J}{J}
\newcommand{\W}{\mathfrak{W}}

\renewcommand{\leq}{\leqslant}
\renewcommand{\geq}{\geqslant}

\newcommand{\F}{\mathcal{F}}
\newcommand{\T}{\mathcal{T}}
\newcommand{\AF}{\mathcal{AF}}
\DeclareMathOperator{\Co}{Co}
\DeclareMathOperator{\Fl}{Fl}
\newcommand{\dH}{\dim_{\mathrm{H}}}

\newcommand{\R}{\mathbb{R}}
\DeclareMathOperator{\Div}{div}

\textwidth=16cm
\oddsidemargin=0pt
\topmargin=0pt

\newtheorem{Le}{Lemma}[section]
\newtheorem{Def}{Definition}[section]

\newtheorem{Th}{Theorem}[section]
\newtheorem{Cor}[Le]{Corollary}
\newtheorem{Rem}[Le]{Remark}
\newtheorem{Conj}{Conjecture}
\numberwithin{equation}{section}

\begin{document}

\title{Martingale approach to Sobolev embedding theorems}

\author{Rami Ayoush
\and 
Dmitriy Stolyarov\thanks{Supported by Russian Foundation for Basic Research grant no. 18-31-00037}
\and
Michal Wojciechowski
}

\maketitle

%\subjclass{Primary 60G46; Secondary 46E35}

\begin{abstract}
We prove a martingale analog of van Schaftingen's theorem and give sharp estimates on the lower Hausdorff dimension of measures in martingale shift invariant spaces. We also provide martingale analogs of trace theorems for Sobolev functions.
%% Keywords are optional
%\keywords{Sobolev embedding, martingale inequalities, Hausdorff dimension of measures, trace theorem}
\end{abstract}

\section{Introduction}
Martingale analogs are ubiquitous in harmonic analysis and often serve effectively as models of theorems on Euclidean spaces. For example, it is a commonplace to treat martingale transforms as models of a Calder\'on--Zygmund operator. There are numerous examples in the literature. In \cite{Janson}, Janson characterized~$H_1$ martingales adapted to an~$m$-regular filtration in terms of boundedness of specific martingale transforms. His theorem might be thought of as a martingale analog of the celebrated Fefferman--Stein theorem on characterization of~$H_1$ by the Riesz transforms. Janson's approach revealed that antisymmetry is a necessary condition for a vector-valued multiplier to describe $H_1(\R^d)$. Later on Uchiyama in~\cite{U} proved sufficiency of this assumption. The martingale model allowed Janson to get rid of special structure of the multiplier, which lead to a general form of the Fefferman--Stein theorem in the Euclidean setting, proved by Uchiyama. In this paper we will employ the same principle to start the investigation of several problems related to Sobolev and~$\mathrm{BV}$-type spaces.

The original approach~\cite{Sobolev} to Sobolev embedding theorems is based on the Hardy--Littlewood--Sobolev inequality. Unfortunately, the latter inequality becomes invalid in the limit case~$p=1$. On the other hand, the embedding
\begin{equation}\label{GagliardoNirenberg}
\dot{W}_1^1(\mathbb{R}^d)\hookrightarrow L_{\frac{d}{d-1}}
\end{equation}
holds true, as it was proved later by Gagliardo~\cite{Gagliardo} and Nirenberg~\cite{Nirenberg}. The simple explanation of this fact is that the gradients of~$\dot{W}_1^1$-functions may be naturally embedded into the space~$L_1(\mathbb{R}^d,\mathbb{R}^d)$ of~$\mathbb{R}^d$-valued summable functions, however, they do not span the whole space~$L_1$. In fact, one can go the reverse direction and use the embedding~\eqref{GagliardoNirenberg} to prove that~$\dot{W}_1^1$ is not isomorphic to~$L_1$ as a Banach space, see~\cite{Kislyakov}. Later, the limiting embedding~\eqref{GagliardoNirenberg} was generalized to the case of higher order Sobolev spaces, fractional derivatives, and anisotropic case (we mention the papers~\cite{BesovIliin},~\cite{Kolyada},~\cite{Solonnikov} among others).

The original Sobolev's approach raises a natural question: for which $L_1$-based spaces is the Hardy--Littlewood--Sobolev inequality true? The embedding theorems cited above provide examples of such spaces (and the space of gradients of~$\dot{W}_1^1$ functions is the first among them). It is natural to assume that our space of functions is shift-invariant (we may shift the function without changing its norm) and also dilation invariant in a certain sense. The second assumption is more delicate since there are various groups of dilations acting on the Euclidean space (as we have mentioned, the limiting embedding theorems are also true in the anisotropic setting), and it is difficult to choose a general definition. For the sake of brevity, let us concentrate on the classical isotropic case. It seems that vector-valued functions provide good formalism. We will consider functions in~$L_1(\mathbb{R}^d,\mathbb{C}^\ell)$, i.e. summable functions in~$d$ variables taking values in~$\mathbb{C}^\ell$. The notation~$G(\ell,k)$ denotes the Grassmanian, i.e. the collection of all~$k$-dimensional linear subspaces of~$\mathbb{C}^\ell$. %By~$\hat{f}$ we denote the Fourier transform of~$f$.
\begin{Def}\label{OurSobolev}
Let~$\Omega\colon S^{d-1}\to G(\ell,k)$ be a smooth function. Define the space~$\mathfrak{W}$ by the rule
\begin{equation*}
\mathfrak{W} = \Big\{g\in L_1(\mathbb{R}^d,\mathbb{C}^\ell)\,\Big|\; \forall \xi \in \mathbb{R}^d\setminus \{0\} \quad \hat{g}(\xi)\in \Omega\Big(\frac{\xi}{|\xi|}\Big)\Big\}.
\end{equation*}
\end{Def}
Here~$S^{d-1}$ is the unit sphere in~$\mathbb{R}^{d}$ and~$\hat{f}$ is the Fourier transform of~$f$ (the specification of factors in the definition of the Fourier transform is not of crucial importance here).
\begin{Conj}\label{MainConjecture}
Let~$\alpha \in (0,d)$. The Riesz potential~$\I_{\alpha}$ maps~$\mathfrak{W}$ to~$L_{\frac{d}{d-\alpha}}$ if and only if
\begin{equation}\label{Cancellation}
\bigcap_{\xi \in S^{d-1}}\Omega(\xi) = \{0\}.
\end{equation}
\end{Conj}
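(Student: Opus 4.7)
The ``only if'' direction is soft. If $v\in \bigcap_{\xi\in S^{d-1}}\Omega(\xi)$ is nonzero, then for every scalar $f\in L_1(\mathbb{R}^d)$ the function $fv$ lies in $\mathfrak{W}$, because $\widehat{fv}(\xi)=\hat f(\xi)\,v\in\Omega(\xi/|\xi|)$ trivially. If $\I_\alpha$ were bounded $\mathfrak{W}\to L_{d/(d-\alpha)}$, composing with $f\mapsto fv$ would give boundedness $\I_\alpha\colon L_1\to L_{d/(d-\alpha)}$. This fails at the endpoint $p=1$; a concentrating sequence of $L_1$-normalized bumps shows the operator is not even of weak type.

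For the ``if'' direction, the plan is to follow the van Schaftingen philosophy. By duality (between $L_{d/(d-\alpha)}$ and $L_{d/\alpha}$) and self-adjointness of $\I_\alpha$, the conjecture is equivalent to
\[
\Bigl|\int \scalprod{g(x)}{\I_\alpha\psi(x)}\,dx\Bigr|\le C\,\|g\|_1\,\|\psi\|_{d/\alpha}
\qquad(g\in \mathfrak{W},\ \psi\in\mathcal{S}(\mathbb{R}^d,\mathbb{C}^\ell)).
\]
The Fourier constraint on $g$ allows one to pre-project the dual variable: choosing, by smoothness of $\Omega$, a smooth family of linear maps $P(\xi)\colon\mathbb{C}^\ell\to\Omega(\xi)$ with $P(\xi)|_{\Omega(\xi)}=\mathrm{id}$, the pairing is unchanged upon replacing $\psi$ by the Fourier multiplier $M\psi$ defined by $\widehat{M\psi}(\xi)=P^*(\xi/|\xi|)\hat\psi(\xi)$. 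The cancellation condition \eqref{Cancellation} is precisely what guarantees that the symbol $P^*(\xi/|\xi|)$ has trivial common kernel over $\xi$, hence the multiplier $M$ improves the integrability/Hardy-space character of $\psi$ in a nontrivial direction.

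Given this reduction, I would decompose $g$ atomically. Using dyadic Calder\'on--Zygmund theory one writes $g=\sum_Q a_Q$ with atoms supported in dyadic cubes $Q$, $\|a_Q\|_\infty\lesssim |Q|^{-1}\lambda_Q$ and $\sum \lambda_Q\lesssim \|g\|_1$. The membership $g\in\mathfrak{W}$ forces additional vanishing moments of $a_Q$ in directions orthogonal to $\Omega$ (this is the real content of the constraint on $\hat g$), and combined with the regularity of $\I_\alpha M\psi$ at scale $\ell(Q)$, each atom is estimated by the John--Nirenberg/Sobolev-type bound
$
|\int \scalprod{a_Q}{\I_\alpha M\psi}|\lesssim \lambda_Q\,\|\psi\|_{d/\alpha},
$
yielding the claim after summation.

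The main obstacle, and the reason \eqref{Cancellation} is \emph{a priori} only conjecturally sufficient, is the implementation of the atomic decomposition compatible with the bundle $\Omega$: the cancellation is \emph{global} on $S^{d-1}$ while the decomposition is \emph{local} in space, and in degenerate geometries (e.g.\ $\Omega(\xi)$ tangent to a fixed subspace on a hypersurface of $S^{d-1}$) the constants in the projector $P(\xi)$ blow up. This is precisely the difficulty that the martingale framework of this paper is designed to bypass: on a regular filtration the projectors become constant on each level and the cancellation identity is a conditional expectation. Accordingly, my plan would be to first establish the result in the martingale setting (the authors' main theorem) and then attempt a transference via the dyadic representation of the Riesz potential as an average of shift operators, which reduces the Euclidean conjecture to its dyadic model up to a harmless averaging over translations and dilations.
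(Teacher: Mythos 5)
The statement you are asked about is Conjecture~\ref{MainConjecture}: it is an \emph{open problem} in this paper, not a theorem. The authors prove only a martingale analog (Theorem~\ref{vanSchaftingen} and inequality~\eqref{MainInequality}); the only known Euclidean case is van Schaftingen's theorem, which requires $\Omega$ to be the symbol of a differential operator (a rational map) and $\alpha\ge 1$. So any complete ``proof'' here would be a major new result, and yours is not one: it is a programme with the decisive step missing. Your ``only if'' direction is essentially right --- $f\mapsto fv$ does embed $L_1$ into $\mathfrak{W}$ when $v$ lies in the common intersection, and the strong-type bound $\I_\alpha\colon L_1\to L_{d/(d-\alpha)}$ fails --- but the parenthetical claim that the operator ``is not even of weak type'' is false: $\I_\alpha$ \emph{is} of weak type $(1,\frac{d}{d-\alpha})$; the concentrating bumps (equivalently, $\I_\alpha[\delta_0]=c|x|^{\alpha-d}\in L_{\frac{d}{d-\alpha},\infty}\setminus L_{\frac{d}{d-\alpha}}$) rule out only the strong bound.

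The genuine gap is in the ``if'' direction, at the step ``the membership $g\in\mathfrak{W}$ forces additional vanishing moments of $a_Q$.'' The condition $\hat g(\xi)\in\Omega(\xi/|\xi|)$ is a global, Fourier-side constraint; it is not inherited by the compactly supported pieces $a_Q$ of a Calder\'on--Zygmund decomposition, and there is no known way to localize it into pointwise moment conditions on cubes. This is precisely why the conjecture is open: the cancellation~\eqref{Cancellation} lives on the sphere while the decomposition lives in physical space, and for general smooth $\Omega$ (non-rational symbol, or $\alpha<1$) no existing technique bridges the two. Your own closing paragraph concedes this, and the proposed fallback --- prove the dyadic/martingale model and transfer --- is exactly what the paper does for the first half, while explicitly leaving the transference to $\mathbb{R}^d$ as a likelihood rather than a theorem. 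In short: the necessity argument stands (modulo the weak-type slip), the sufficiency argument is not a proof, and no proof is currently known.
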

Consider a version of the space~$\BV$ of functions of bounded variation:
\begin{equation}\label{OurBV}
\mathfrak{BV} = \Big\{\mu\ \hbox{is an $\mathbb{C}^\ell$-valued measure of bounded variation}\,\Big|\; \forall \xi \in \mathbb{R}^d\setminus \{0\} \quad \hat{\mu}(\xi)\in \Omega\Big(\frac{\xi}{|\xi|}\Big)\Big\}.
\end{equation}
Conjecture~\ref{MainConjecture} claims that the proper analog of the Sobolev embedding holds if and only if~$\mathfrak{BV}$ does not contain~$\mathbb{C}^\ell$-valued delta-measures. Note that delta measures are the extreme points of the unit ball of the space of measures. The conjecture says that if we exclude the extreme points from our space of functions, then the Hardy--Littlewood--Sobolev inequality becomes true. 

In fact, the case where~$\Omega$ is a rational function and~$\alpha \geq 1$, had been already considered by van Schaftingen in~\cite{vS}. 
\begin{Def}
	Let~$V$ and~$U$ be finite dimensional linear spaces. Suppose that~$A(D)$ is a homogenous differential operator of order~$r$ with constant coefficients on~$\R^{d}$ mapping~$V$-valued functions to~$U$-valued functions, that is
	\begin{equation*}
	A(D)u = \sum\limits_{\genfrac{}{}{0pt}{-2}{\alpha \in \mathbb{N}^{d},}{|\alpha| = r}} A_{\alpha}(\partial^{\alpha}u)
	\end{equation*}
	for~$u \in C^{\infty}(\R^{d}, V)$, where~$A_{\alpha} \in \mathcal{L}(V,U)$. We say that~$A(D)$ is cancelling if
	\begin{equation*}
	\bigcap_{\xi \in \R^{d} \setminus \{0\}} \mathbb{A}[V] = \{0\},
	\end{equation*}
	where~$\mathbb{A}$ is matrix-valued differential symbol corresponding to~$A$. If additionally for~$\xi \neq 0$ the mapping~$\mathbb{A}(\xi)$ is one-to-one, then we refer to~$A$ as to an elliptic operator.
\end{Def}
\begin{Th}[van Schaftingen's theorem]\label{vanthm}
Suppose that~$A(D)$ is a homogenous differential operator of order~$r$ on $\R^{d}$ from~$V$ to~$U$. Then the estimate
	\begin{equation*}
	\lVert D^{r-1} f \rVert_{L_{\frac{d}{d-1}}} \lesssim \lVert A(D)f \rVert_{L_{1}}
	\end{equation*}
	holds for any~$f \in C^{\infty}_{0}(\R^{d};V)$ if and only if~$A(D)$ is elliptic and cancelling.
\end{Th}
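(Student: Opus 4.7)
The proof splits into necessity (the estimate forces ellipticity and cancelling) and sufficiency; the latter is where the real content lies. For necessity of ellipticity, I would argue by anisotropic scaling: suppose $\mathbb{A}(\xi_0)v_0 = 0$ for some $\xi_0 \neq 0$, $v_0 \neq 0$, and after rotation take $\xi_0 = e_1$, so that $A_{(r,0,\ldots,0)}v_0 = 0$. Test the inequality on $f_N(x) = v_0 g(x_1)\eta(x'/N)$ with $x' = (x_2,\ldots,x_d)$ and $g, \eta$ smooth compactly supported. The vanishing of the purely longitudinal coefficient forces every nonzero summand of $A(D)f_N$ to carry at least one $x'$-derivative and hence a factor $N^{-1}$, giving $\|A(D)f_N\|_{L_1} \lesssim N^{d-2}$, while the longitudinal piece of $D^{r-1}f_N$ yields $\|D^{r-1}f_N\|_{L_{d/(d-1)}} \sim N^{(d-1)^2/d}$; the ratio diverges like $N^{1/d}$, contradicting the inequality.

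For necessity of cancelling, suppose there is a nonzero $e \in \bigcap_{\xi \neq 0}\mathbb{A}(\xi)[V]$. Using ellipticity I construct a smooth homogeneous symbol $B(\xi)$ of degree $-r$ with $\mathbb{A}(\xi)B(\xi)e = e$, and test on $f_n = B(D)(e\rho_n)$ (suitably truncated to be compactly supported) with $\rho_n$ an approximate identity. Then $A(D)f_n = e\rho_n$ stays uniformly bounded in $L_1$, whereas $D^{r-1}f_n$ converges, up to $L_p$-bounded degree-zero multipliers, to the Riesz-type distribution $\I_1(e\delta_0) \sim e|x|^{-(d-1)}$, which is not in $L_{d/(d-1)}$.

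For sufficiency, ellipticity lets me reduce to an endpoint estimate for $\I_1$ on a Fourier-constrained subclass. Indeed, when $\mathbb{A}(\xi)$ is injective for $\xi \neq 0$, the symbol $[\mathbb{A}^*\mathbb{A}]^{-1}\mathbb{A}^*$ is smooth and homogeneous of degree $-r$ off the origin, so $D^{r-1}f = M(D)\big(\I_1 A(D)f\big)$ for a homogeneous degree-zero Fourier multiplier $M(D)$, bounded on $L_{d/(d-1)}$ by Calder\'on--Zygmund theory. It therefore suffices to show $\|\I_1 g\|_{L_{d/(d-1)}} \lesssim \|g\|_{L_1}$ for $g \in C_0^\infty(\R^d;U)$ satisfying the Fourier constraint $\hat{g}(\xi) \in \mathbb{A}(\xi)[V]$ for all $\xi \neq 0$.

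This constrained endpoint estimate is the main obstacle. After dualizing, it amounts to bounding
\begin{equation*}
\Big|\int_{\R^d}\scalprod{g(x)}{\varphi(x)}\,dx\Big| \lesssim \|g\|_{L_1}\|\nabla\varphi\|_{L_d}
\end{equation*}
for smooth $\varphi$. The Fourier constraint on $g$ means the pairing is unchanged when we modify $\varphi$ by a field whose Fourier transform lies pointwise in $\mathbb{A}(\xi)[V]^\perp$, and the cancelling hypothesis $\bigcap_{\xi}\mathbb{A}(\xi)[V] = \{0\}$ is exactly what permits a smooth selection of such a correction which removes the $L_\infty$-uncontrollable piece of $\varphi$ (the piece needed for a naive pairing estimate but not governed by $\|\nabla\varphi\|_{L_d}$ in dimension $d\geq 2$). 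Executing this correction by a Bourgain--Brezis-type integration-by-parts combined with a partition of unity on the unit sphere is the delicate technical step, and is the heart of van Schaftingen's original argument in \cite{vS}.
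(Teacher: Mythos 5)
First, a point of reference: the paper does not prove Theorem~\ref{vanthm} at all --- it is quoted from~\cite{vS}, and the body of the paper establishes only its martingale analog (Theorem~\ref{vanSchaftingen}). So your proposal must stand on its own. Its soft parts are fine. The anisotropic scaling argument for necessity of ellipticity is correct: with $\mathbb{A}(e_1)v_0=0$ every surviving term of $A(D)f_N$ carries an $x'$-derivative, so $\|A(D)f_N\|_{L_1}\lesssim N^{d-2}$ while $\|D^{r-1}f_N\|_{L_{d/(d-1)}}\sim N^{(d-1)^2/d}$, and indeed $(d-1)^2/d-(d-2)=1/d>0$. The necessity of cancelling via $f_n=B(D)(e\rho_n)$, with $\mathbb{A}(\xi)B(\xi)$ the projection onto $\mathbb{A}(\xi)[V]$ so that $A(D)f_n=e\rho_n$ while $D^{r-1}f_n$ develops an $|x|^{-(d-1)}\notin L_{d/(d-1)}$ singularity, is also sound (and you correctly order the two necessity claims, since $B$ needs ellipticity). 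The reduction of sufficiency to the constrained estimate $\|\I_1 g\|_{L_{d/(d-1)}}\lesssim\|g\|_{L_1}$ for $\hat g(\xi)\in\mathbb{A}(\xi)[V]$ via a degree-zero Calder\'on--Zygmund multiplier is exactly the point of view this paper promotes (Conjecture~\ref{MainConjecture} with $\Omega(\xi)=\mathbb{A}(\xi)[V]$).

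The genuine gap is that the sufficiency direction is asserted, not proved. The dualized inequality $|\int\scalprod{g}{\varphi}|\lesssim\|g\|_{L_1}\|\nabla\varphi\|_{L_d}$ under the Fourier constraint is the entire analytic content of the theorem --- everything preceding it in your argument is linear algebra and standard multiplier theory --- and you dismiss it as ``the delicate technical step'' that ``is the heart of van Schaftingen's original argument.'' Identifying where the difficulty lives is not the same as resolving it: one must actually construct the correction of $\varphi$ (in~\cite{vS} this is done direction by direction, integrating by parts along a chosen coordinate to trade the uncontrolled component of $\varphi$ for $\nabla\varphi$, with the cancelling condition entering through a compactness argument on $S^{d-1}$ to show that the residual components, which cannot be integrated by parts, pair to zero against $g$ in the limit). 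Without that construction the proof is a road map of~\cite{vS}, not a proof. Since the whole purpose of the present paper is to replace precisely that step by the convex/flat atom decomposition in the martingale model, the omission is not incidental.
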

Apart from implying its classical prototype, this remarkable result leads to  the Korn--Sobolev inequality (\cite{S}). In fact, Theorem~\ref{vanthm} is a result of a long development starting with~\cite{BB} and~\cite{vanSchaftingen1}. Particular cases of cancelling operators were earlier considered in~\cite{BB1},~\cite{BB2}, and~\cite{LanzaniStein} (citation is far from being complete, see the survey~\cite{vS2}).

It is known that the classical Gagliardo--Nirenberg embedding~$\dot{W}_1^1\hookrightarrow L_{\frac{d}{d-1}}$ may be improved to embedding into the best possible Lorentz space~$L_{\frac{d}{d-1},1}$ (we refer the reader to~\cite{BerghLofstrom} for background on Lorentz and Besov spaces). Moreover, one can also ``improve the interpolation parameter'' of smoothness and embed into the Besov--Lorentz space~$B^{0,1}_{\frac{d}{d-1},1}$, see~\cite{Kolyada}. We suggest a stronger form of Conjecture~\ref{MainConjecture}.
\begin{Conj}\label{StrongMainConjecture}
Let~$\alpha \in (0,d)$. The Riesz potential~$\I_{\alpha}$ maps~$\mathfrak{W}$ to~$B^{0,1}_{\frac{d}{d-\alpha},1}$ if and only if
\begin{equation*}
\bigcap_{\xi \in S^{d-1}}\Omega(\xi) = \{0\}.
\end{equation*}
\end{Conj}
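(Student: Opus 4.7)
The conjecture has an easy necessity direction and a hard sufficiency direction; I sketch both.

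For necessity, suppose the cancellation condition fails and pick a nonzero $v \in \bigcap_{\xi \in S^{d-1}} \Omega(\xi)$. For every scalar $f \in L_1(\R^d)$, the vector-valued function $vf$ trivially lies in $\W$, so an embedding of $\W$ into $B^{0,1}_{d/(d-\alpha),1}$ through $\I_\alpha$ would specialize to the scalar estimate $\|\I_\alpha f\|_{B^{0,1}_{d/(d-\alpha),1}} \lesssim \|f\|_{L_1}$. The continuous inclusion $B^{0,1}_{d/(d-\alpha),1} \hookrightarrow L_{d/(d-\alpha),1}$ then contradicts the classical fact that $\I_\alpha$ is not of strong type $(1,d/(d-\alpha))$: dilating a bump $\psi$ as $\psi_\lambda(x)=\lambda^d \psi(\lambda x)$ keeps $\|\psi_\lambda\|_{L_1}$ fixed while $\I_\alpha \psi_\lambda$ tends pointwise to $c\,|x|^{\alpha-d}\int\psi$, a function in $L_{d/(d-\alpha),\infty}$ but not in $L_{d/(d-\alpha),1}$.

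For sufficiency, the plan is to reduce to a single-scale estimate via a Littlewood--Paley / wavelet decomposition. Using the characterization
\begin{equation*}
\|F\|_{B^{0,1}_{p,1}} \sim \sum_{j \in \mathbb{Z}} \|\Delta_j F\|_{L_{p,1}}
\end{equation*}
with $p=d/(d-\alpha)$, and exploiting that $\I_\alpha$ acts as multiplication by a symbol of size $\sim 2^{-\alpha j}$ on the $j$-th Littlewood--Paley piece, the target bound reduces to
\begin{equation*}
\sum_{j \in \mathbb{Z}} 2^{-\alpha j}\|\Delta_j f\|_{L_{d/(d-\alpha),1}} \lesssim \|f\|_{L_1}
\end{equation*}
for $f \in \W$. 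I would first transcribe this into the martingale framework of the paper by expanding $f$ in a wavelet basis adapted to a dyadic filtration and interpreting the increments as martingale differences carrying a fibered cancellation inherited from $\Omega$. Then I would invoke the martingale analog of van Schaftingen's theorem, the principal new tool of this paper, to obtain a single-scale Lorentz estimate on each $\Delta_j f$ that already incorporates the cancellation: the weak-type $(1,d/(d-\alpha))$ bound for $\I_\alpha$ combined with the cancellation should, in martingale language, lift to a strong $L_{d/(d-\alpha),1}$ inequality at each scale. Summation over scales would then be completed via an atomic decomposition of $L_{p,1}$ in the style of Kolyada \cite{Kolyada}, which is precisely what produces the extra smoothness parameter and the index $1$ in the Besov-Lorentz space.

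The main obstacle is the passage between the Fourier-side constraint $\hat{g}(\xi)\in\Omega(\xi/|\xi|)$ and a genuine martingale-side cancellation. The former is a pointwise condition on the frequency cone in each direction, and at the level of wavelet coefficients it becomes a correlation constraint that a dyadic model captures only imperfectly: one can enforce it exactly when $\Omega$ comes from a homogeneous elliptic operator (the setting of Theorem \ref{vanthm}), but new ideas are needed for a general smooth $\Omega$. Even the weaker Conjecture \ref{MainConjecture} remains open beyond the rational case, so one should not expect Conjecture \ref{StrongMainConjecture} to follow from a direct application of the paper's results; the martingale framework is better viewed as a guide to the between-scale cancellation that one must eventually extract in the Euclidean setting.
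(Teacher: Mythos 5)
The statement you were given is Conjecture~\ref{StrongMainConjecture}, which the paper itself leaves open: the authors prove only its martingale analog (Theorem~\ref{vanSchaftingen} together with the stronger inequality~\eqref{MainInequality}) and merely express the expectation that the method transfers to~$\R^d$. There is therefore no proof in the paper to compare yours against, and your proposal, to its credit, does not pretend to close the Euclidean case.

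Your necessity direction is essentially correct and standard: if $v\neq 0$ lies in every $\Omega(\xi)$, then $vf\in\mathfrak{W}$ for every scalar $f\in L_1$, so the embedding would force $\I_\alpha$ to be of strong type $(1,\tfrac{d}{d-\alpha})$ via $B^{0,1}_{\frac{d}{d-\alpha},1}\hookrightarrow L_{\frac{d}{d-\alpha},1}$, which the dilation computation rules out; you should add a line explaining why the pointwise convergence of $\I_\alpha\psi_\lambda$ to $c|x|^{\alpha-d}$ forces the Lorentz norms to blow up (Fatou on distribution functions suffices), but this is routine. The sufficiency part, however, is a program rather than a proof, and you correctly name the obstruction yourself: the constraint $\hat g(\xi)\in\Omega(\xi/|\xi|)$ lives on entire frequency rays, whereas a martingale difference carries only a single-scale, fibered cancellation, and no mechanism is given for transporting the former onto a filtration without loss. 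The machinery the paper actually uses in the model --- the convex/flat atom dichotomy, the compactness argument of Lemma~\ref{InequalityForNon-Intensive2}, and the stopping-time summation over flat trees --- has no Euclidean counterpart written down, so ``invoke the martingale van Schaftingen theorem after a wavelet transcription'' is precisely the step that is missing, not a step that can be taken. In short: your necessity argument is complete modulo routine details; sufficiency remains open, exactly as you acknowledge and exactly as the paper states.
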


The cancelling operators also improve Hardy-type inequalities (see~\cite{BV} and~\cite{MCanc}). In particular, the following result from~\cite{BV} holds.
\begin{Th}
	\label{hardy}
	Suppose that~$A(D)$ is a homogenous differential operator of order~$r$ on $\R^{d}$ acting  from~$V$ to $U$. Let~$s \in \{1, 2, \ldots, \min\{r,d-1\}\}$. The estimate
	\begin{equation*}
	\int\limits_{\mathbb{R}^{d}} \frac{\lvert D^{r-s}f(x)\rvert}{\lvert x\rvert^{s}}\,dx \lesssim \lVert A(D)f \rVert_{L_{1}}
	\end{equation*}
	holds for all~$f \in C^{\infty}_{0}(\R^{d};V)$ if and only if~$A(D)$ is elliptic and cancelling.
\end{Th}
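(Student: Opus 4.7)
The plan is to prove sufficiency and necessity separately. For sufficiency the observation is that the Hardy weight $|x|^{-s}$ lies in the weak Lorentz space $L_{d/s,\infty}(\R^d)$, so Hölder's inequality in Lorentz spaces gives
\begin{equation*}
\int_{\R^{d}} \frac{|D^{r-s}f(x)|}{|x|^{s}}\,dx \lesssim \||x|^{-s}\|_{L_{d/s,\infty}}\,\|D^{r-s}f\|_{L_{d/(d-s),1}},
\end{equation*}
which reduces the problem to the Lorentz endpoint estimate
\begin{equation*}
\|D^{r-s}f\|_{L_{d/(d-s),1}} \lesssim \|A(D)f\|_{L_1}.
\end{equation*}
I would establish this by combining the Lorentz-endpoint strengthening of Theorem~\ref{vanthm}, namely $\|D^{r-1}f\|_{L_{d/(d-1),1}} \lesssim \|A(D)f\|_{L_1}$ (known to be valid under the elliptic-cancelling hypothesis), with the Hardy--Littlewood--Sobolev mapping $\I_{s-1}\colon L_{d/(d-1),1}\to L_{d/(d-s),1}$ applied to the Riesz representation $D^{r-s}f = \I_{s-1}R(D^{r-1}f)$, where $R$ is a matrix of iterated Riesz transforms (bounded on the relevant Lorentz spaces). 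The hypothesis $s \leq d-1$ keeps the target exponent $d/(d-s)$ in the admissible range for Hardy--Littlewood--Sobolev; the hypothesis $s \leq r$ guarantees that $D^{r-s}f$ is a genuine derivative of $f$.

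For necessity, assume the inequality holds and derive each hypothesis on $A(D)$ by contraposition. If ellipticity fails, pick $\xi_0 \in S^{d-1}$ and $v \in V \setminus \{0\}$ with $\mathbb{A}(\xi_0)v = 0$ and test against $f_R(x) = v\,\phi(\xi_0\cdot x / R)\,\psi(P_{\xi_0^\perp}x)$: the zeroth-order Leibniz term in $A(D)f_R$ vanishes by $\mathbb{A}(\xi_0)v = 0$, so every surviving term carries a factor $R^{-1}$ from a derivative that fell on the slow cutoff, hence $\|A(D)f_R\|_{L_1} = O(1)$; meanwhile the weighted integral grows at least like $\log R$ (already visible in the model $A(D) = \partial_1$), contradicting the inequality as $R \to \infty$. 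If cancellation fails, pick a nonzero $u_0 \in \bigcap_{\xi}\mathbb{A}(\xi)[V]$; by ellipticity the equation $A(D)F = u_0\delta_0$ has a tempered fundamental solution $F$ whose derivatives of order $r-s$ are positively homogeneous of degree $s-d$ near the origin (with a logarithmic correction when $r \geq d$), so $|x|^{-s}|D^{r-s}F|\sim |x|^{-d}$ fails to be locally integrable at~$0$. Mollifying $F$ by $\rho_\eps$ and truncating outside a ball yields a sequence $f_\eps \in C_0^\infty(\R^d;V)$ with $\|A(D)f_\eps\|_{L_1}$ uniformly bounded while the weighted left-hand side diverges as $\eps \to 0$.

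The principal obstacle is the Lorentz endpoint in the ``if'' direction: the plain $L_{d/(d-1)}$ conclusion of Theorem~\ref{vanthm} is insufficient, because $|x|^{-s}$ only sits in weak $L_{d/s}$ and the Hölder pairing forces the predual norm $L_{d/(d-s),1}$. Consequently the whole argument hinges on the sharper endpoint version of the cancelling-elliptic embedding, and the iteration via $\I_{s-1}$ has to be carried out uniformly in $s \in \{1,\ldots,\min\{r,d-1\}\}$ within the Lorentz scale without losing logarithmic factors. A secondary technical point is arranging the mollification and cutoff in the non-cancellation counterexample so that the $L_1$-mass of $A(D)f_\eps$ is preserved (up to a constant) while the singularity of $D^{r-s}F$ at the origin is fully seen by the weight $|x|^{-s}$.
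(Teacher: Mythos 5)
First, a point of comparison: the paper does not prove Theorem~\ref{hardy} at all --- it is quoted from~\cite{BV} as background motivation --- so your proposal can only be judged on its own terms. On those terms, the sufficiency half is a reasonable \emph{reduction} but not a proof: after O'Neil's inequality with $|x|^{-s}\in L_{d/s,\infty}$ and the Lorentz-scale Hardy--Littlewood--Sobolev/Riesz-transform bookkeeping (all of which is fine, and the constraints $s\le r$, $s\le d-1$ enter exactly where you say), everything rests on the endpoint estimate $\|D^{r-1}f\|_{L_{d/(d-1),1}}\lesssim\|A(D)f\|_{L_1}$. That estimate is the van Schaftingen--Spector theorem~\cite{vSS}, a result at least as deep as Theorem~\ref{hardy} itself and proved several years after~\cite{BV}; you flag it as the ``principal obstacle'' but offer nothing towards it, so the if-direction is outsourced rather than established.

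The genuine gap is in the necessity of ellipticity. Write $\xi_0=e_1$ and $x=(x_1,x')$. Since $\phi(x_1/R)$ and $\psi(x')$ depend on disjoint groups of variables, there is no Leibniz mixing: the term of $A(D)f_R$ indexed by $\alpha=(\alpha_1,\alpha')$ is $A_\alpha v\,R^{-\alpha_1}\phi^{(\alpha_1)}(x_1/R)\,\partial^{\alpha'}\psi(x')$, whose $L_1$ norm is $\asymp R^{1-\alpha_1}$ because the support is a slab of measure $\asymp R$. The hypothesis $\mathbb{A}(\xi_0)v=0$ kills only the single coefficient $A_{(r,0,\dots,0)}$; the terms with $\alpha_1=0$ carry no factor $R^{-1}$ and contribute $\asymp R$ to $\|A(D)f_R\|_{L_1}$, while the left-hand side is only $O(\log R)$ for $s=1$ and $O(R^{1-s})$ for $s\ge 2$. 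So no contradiction is obtained. The model $A(D)=\partial_1$ is misleading here: for it \emph{every} coefficient other than $A_{(1,0,\dots,0)}$ vanishes identically, which is exactly what fails for a general non-elliptic operator. Nor does the standard oscillatory substitute $f_\lambda=v\,e^{i\lambda\xi_0\cdot x}\eta(x)$ help: it gives left-hand side $\asymp\lambda^{r-s}$ against right-hand side $O(\lambda^{r-1})$, which is inconclusive for every $s\ge 1$. Necessity of ellipticity for the Hardy inequality therefore needs a different mechanism (in~\cite{BV} it is handled by a separate argument), and this step is missing. Your non-cancelling counterexample via the fundamental solution $\hat{E}(\xi)=\mathbb{A}(\xi)^{\dagger}u_0/$(homogeneous of degree $-r$), with $|D^{r-s}E(x)|\asymp|x|^{s-d}$ near the origin, is the standard and correct idea --- note only that it presupposes ellipticity, so the logical order must be ``estimate $\Rightarrow$ elliptic'' first, which is precisely the broken step.
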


We also note that the isotropic nature of homogeneity is not of crucial importance here. See~\cite{KMS} and~\cite{Stolyarov} for anisotropic inequalities in the style of Theorem~\ref{vanthm}. The authors suppose that the nature of the effect discussed above has nothing to do with Euclidean space instruments such as the Newton--Leibniz, Stokes, or coarea formulas, but has a purely harmonic analytic explanation. To find it, we construct a probabilistic model (which is, in fact, very close to Janson's model from~\cite{Janson}), where there is no Euclidean space structure and prove the direct analog of Conjecture~\ref{StrongMainConjecture} there (see Theorem~\ref{vanSchaftingen} and inequality~\eqref{MainInequality} below). It is highly likely that the methods of the present paper can be directly adapted to the Euclidean setting and will lead to the proof of Conjecture~\ref{StrongMainConjecture}.

We leave the Sobolev embedding and its analogs for a while to study yet another property of Sobolev functions that emphasizes the difference between~$\dot{W}_1^1$ and~$L_1$. Consider the classical space~$\BV$. It is known that the (vector-valued) measure~$\nabla f$ has good geometric properties. We concentrate on a very simple principle, which is a direct consequence of the coarea formula for~$\BV$ functions. Define the lower Hausdorff dimension~$\dH$ of a measure~$\mu$ to be the infimum of~$\alpha$ such that there exists a Borel set~$F$ of Hausdorff dimension not exceeding~$\alpha$ such that~$\mu(F) \ne 0$. Then,
\begin{equation*}
\dH \nabla f \geq d-1, \quad f \in \BV(\mathbb{R}^d).
\end{equation*}
We believe that this result may be generalized to the setting of~$\mathfrak{BV}$-spaces defined above. We cite a result from~\cite{RW}, where condition~\eqref{Cancellation} plays the central role.

\begin{Th}[\cite{RW}]\label{rwthm} Let~$\Omega\colon S^{d-1} \to S^{d-1}$ be a mapping. Suppose that~$\Omega$ contains at least two linearly independent vectors in its image. If a finite vector-valued measure~$\mu$ satisfies~$\hat{\mu}(\xi) \parallel \Omega(\frac{\xi}{\lvert \xi \rvert})$ for~$\xi \neq 0$, then
	\begin{equation*}
	\dH(\mu) \geq 1.
	\end{equation*}
\end{Th}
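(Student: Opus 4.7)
The plan is to argue by contradiction using Marstrand's projection theorem together with the F.\ and M.\ Riesz theorem on the line. Assume $\dH(\mu) < 1$, so there exists a Borel set $F \subset \R^{d}$ with $\dH F < 1$ such that $\lvert\mu\rvert(F) > 0$; it is convenient to take $F$ to be the set of points where the lower local dimension of $\mu$ is strictly less than one, which ensures $\mu|_{F^{c}}$ has local dimension at least one everywhere.

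For each $e \in S^{d-1}$, let $\pi_{e}\mu$ denote the push-forward of $\mu$ under the orthogonal projection of $\R^{d}$ onto $\R e$. Then $\widehat{\pi_{e}\mu}(t) = \hat{\mu}(te)$, which is parallel to $\Omega(e)$ for $t > 0$ and to $\Omega(-e)$ for $t < 0$. For any $w \in \R^{d}$ with $w \perp \Omega(e)$, the scalar measure $\scalprod{w}{\pi_{e}\mu}$ has Fourier transform vanishing on $(0, \infty)$, so the F.\ and M.\ Riesz theorem implies that it is absolutely continuous with respect to Lebesgue measure on $\R$. Hence the singular part of $\pi_{e}\mu$ is parallel to $\Omega(e)$, and by the symmetric argument also to $\Omega(-e)$; if these are linearly independent, the singular part must vanish. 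Marstrand's theorem gives $\dH\pi_{e}F \leq \dH F < 1$ for Lebesgue almost every $e$, so $\pi_{e}F$ is Lebesgue-null and $\pi_{e}(\mu|_{F})$ is purely singular; identifying it with the corresponding piece of the singular part of $\pi_{e}\mu$ then yields $\pi_{e}(\mu|_{F}) = 0$ for those $e$.

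From vanishing of $\pi_{e}(\mu|_{F})$ on a large family of directions one concludes $\mu|_{F} = 0$ by Fourier inversion and the continuity of $\widehat{\mu|_{F}}$, contradicting $\lvert\mu\rvert(F) > 0$. The main obstacle is the antipodally symmetric case $\Omega(e) \parallel \Omega(-e)$ for every $e$, in which the above single-direction argument only forces $\pi_{e}\mu$ to be parallel to $\Omega(e)$ rather than zero. To handle this case I would project $\mu$ onto the two-dimensional plane $V = \mathrm{span}(\xi_{1},\xi_{2})$ spanned by preimages of two linearly independent values of $\Omega$, and invoke a two-dimensional analog of F.\ and M.\ Riesz (such as the Helson--Lowdenslager theorem on measures whose Fourier transform vanishes on a half-plane of $V$); the delicate point is the construction of a scalar reduction on $V$ whose Fourier transform vanishes on a full half-plane.
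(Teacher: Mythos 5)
First, a point of orientation: Theorem~\ref{rwthm} is not proved in this paper at all --- it is quoted from~\cite{RW} as motivation, and what the paper actually proves is the martingale analog (Theorem~\ref{Uncertainty}), by entirely different means (the convex/flat decomposition of atoms, Lemma~\ref{InequalityForNon-Intensive2}, and a Frostman-type lemma). Your outline --- one-dimensional projections, the F.~and M.~Riesz theorem, and dimension of projections --- is essentially the strategy of~\cite{RW} itself, so it is the right circle of ideas; but as written it has two genuine gaps.

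The first gap is the cancellation problem in passing from ``the singular part of $\pi_e\mu$ vanishes'' to ``$\pi_e(\mu|_F)=0$''. You have $\pi_e\mu=\pi_e(\mu|_F)+\pi_e(\mu|_{F^c})$, and the singular measure $\pi_e(\mu|_F)$, concentrated on the Lebesgue-null set $\pi_e F$, could in principle be cancelled by a piece of $\pi_e(\mu|_{F^c})$ charging $\pi_e^{-1}(\pi_e F)$. Your proposed remedy --- taking $F$ to be the set of points of lower local dimension $<1$ --- does not close this: a measure with local dimension $\ge 1$ everywhere need not have absolutely continuous projections, and nothing prevents $\mu|_{F^c}$ from giving mass to $\pi_e^{-1}(\pi_e F)$. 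A correct fix is to average over directions: for fixed $x\notin F$ the set $\{e:\pi_e x\in\pi_e F\}$ is contained in $\bigcup_{y\in F}(x-y)^{\perp}\cap S^{d-1}$, which has Hausdorff dimension at most $(d-2)+\dH F<d-1$ and hence surface measure zero; Fubini then gives $|\mu|\bigl(F^c\cap\pi_e^{-1}(\pi_e F)\bigr)=0$ for a.e.~$e$, after which your argument (restriction of an absolutely continuous measure to a null set, then continuity of $\widehat{\mu|_F}$) goes through. The second and more serious gap is that the ``antipodally symmetric case'' you defer to the last sentence is not an edge case but the heart of the theorem: for the motivating example $\mu=\nabla f$ one has $\Omega(e)=e$ and $\Omega(-e)=-e\parallel\Omega(e)$, so the single-direction F.~and M.~Riesz argument constrains nothing. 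Your proposed escape via a Helson--Lowdenslager theorem on a two-plane $V$ requires a vector $w$ with $w\perp\Omega(\xi/|\xi|)$ for \emph{all} $\xi$ in a half-plane of $V$, i.e.\ $w$ orthogonal to the span of $\Omega$ over a whole half-circle; the hypothesis (two independent vectors somewhere in the image) gives no such $w$. So the main case is left unproved, and this is precisely where~\cite{RW} has to work hardest. Until that step is supplied, the proof is incomplete.
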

The appearance of the cancelling condition is not a coincidence here: the method from~\cite{SW} shows a direct connection between the lower Hausdorff dimension estimates for functions in~$\mathfrak{BV}$ and embedding theorems. This connection is, in fact, based on a proper generalization of Frostman's lemma. We cite a theorem from~\cite{SW}, which hints that the isotropic nature of homogeneity is not of crucial importance here as well. In the theorem below,~$\partial_j$ denotes the differentiation with respect to~$j$-th coordinate.
\begin{Th}[\cite{SW}]\label{swthm} 
If a vector measure $\mu = (\partial^{m_{1}}_{1}f,\partial_{2}^{m_2}\dots,\partial^{m_{d}}_{d}f)$ is a measure of finite variation, then
	\begin{equation*}
	\dH(\mu) \geq d-1.
	\end{equation*}
\end{Th}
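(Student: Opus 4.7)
The plan is to prove the theorem by a one-dimensional slicing argument, reducing to classical facts about $\BV$ functions on $\R$. For each coordinate $j\in\{1,\ldots,d\}$, I will write $x=(x',x_j)$ with $x'\in\R^{d-1}$, denote by $\pi_j\colon\R^d\to\R^{d-1}$ the projection that forgets the $j$-th coordinate, and by $L_{x'}=\{x'\}\times\R$ the line through $x'$ parallel to $e_j$. The central point to establish is that the pushforward $(\pi_j)_{\ast}|\mu_j|$ is absolutely continuous with respect to Lebesgue measure on $\R^{d-1}$; once this is in place, the dimension bound drops out of a Fubini-type computation.

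To carry out the disintegration I set $g_j=\partial_j^{m_j-1}f$, so that $\partial_j g_j=\mu_j$ distributionally. I first show, using the full hypothesis that every component of $\mu$ is a finite measure (not merely the $j$-th one), that $g_j$ admits a locally integrable representative on $\R^d$. Given this, the classical one-dimensional $\BV$-slicing theorem applied in the direction $e_j$ yields that for Lebesgue-a.e.\ $x'\in\R^{d-1}$ the function $s\mapsto g_j(x',s)$ is of bounded variation on $\R$, its distributional derivative is a finite measure $\mu_j^{x'}$ on $L_{x'}$, and
\begin{equation*}
|\mu_j|(E) \;=\; \int_{\R^{d-1}} |\mu_j^{x'}|(E_{x'})\,dx' \qquad \text{for every Borel } E\subset\R^d,
\end{equation*}
where $E_{x'}=\{s\in\R:(x',s)\in E\}$. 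Specialising this to $E=\pi_j^{-1}(A')$ for Borel $A'\subset\R^{d-1}$ shows that $(\pi_j)_{\ast}|\mu_j|$ is absolutely continuous with respect to Lebesgue measure on $\R^{d-1}$, with density $x'\mapsto|\mu_j^{x'}|(\R)\in L^1(\R^{d-1})$.

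With this in hand, I argue by contradiction. Suppose $\dH(\mu)<d-1$; then there exists a Borel set $F\subset\R^d$ with $\dH(F)<d-1$ and $\mu(F)\neq 0$, so some component satisfies $\mu_j(F)\neq 0$. Since $\pi_j$ is $1$-Lipschitz, $\dH(\pi_j(F))\leq\dH(F)<d-1$, and therefore $\pi_j(F)$ has $(d-1)$-dimensional Lebesgue measure zero. Consequently
\begin{equation*}
|\mu_j|(F) \;\leq\; |\mu_j|\bigl(\pi_j^{-1}(\pi_j(F))\bigr) \;=\; \int_{\pi_j(F)}|\mu_j^{x'}|(\R)\,dx' \;=\; 0,
\end{equation*}
an integrable function on $\R^{d-1}$ integrated against a null set, contradicting $\mu_j(F)\neq 0$.

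The main obstacle is thus the $L^1_{\mathrm{loc}}$-regularity of $g_j$ in the second paragraph. That one really needs all components of $\mu$ to be measures is illustrated in $\R^2$ by $f(x_1,x_2)=H(x_1)\,\delta_{\{x_2=0\}}$, with $H$ the Heaviside function: here $\partial_1 f=\delta_{(0,0)}$ is a finite measure, yet $(\pi_1)_{\ast}|\partial_1 f|$ is a point mass and the disintegration fails. This example is excluded by the hypothesis only because $\partial_2 f$ is not a measure, so any proof of the required regularity of $g_j$ has to genuinely exploit that every $\partial_k^{m_k}f$ is simultaneously a finite measure --- plausibly through Fourier arguments, writing $\widehat{g_j}$ in terms of the $\widehat{\mu_k}$ via a smooth partition of unity on $\R^d\setminus\{0\}$ adapted to the coordinate hyperplanes and invoking mapping properties of the resulting anisotropic Fourier multipliers on finite measures.
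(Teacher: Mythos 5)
Your slicing strategy is genuinely different from the route taken in \cite{SW} (whose argument this paper transplants to martingales): there the dimension bound is extracted from an embedding theorem for the constrained measure together with a generalization of Frostman's lemma, which is exactly the scheme of the proof of inequality~\eqref{If} here (H\"older's inequality, the $L_p$ bounds of Lemma~\ref{Inductive}, and the Frostman-type hypothesis~\eqref{OurFrostman}). Your argument instead exploits the coordinate structure of $(\partial_1^{m_1},\dots,\partial_d^{m_d})$ through one-dimensional sections. The parts you do carry out are sound: granted that $g_j=\partial_j^{m_j-1}f$ has a locally integrable representative, the directional $\BV$ slicing theorem does give the disintegration of $|\mu_j|$ over lines parallel to $e_j$, hence $(\pi_j)_{*}|\mu_j|\ll\mathcal{L}^{d-1}$; and since a Borel set of Hausdorff dimension $<d-1$ has Lebesgue-null image under every coordinate projection, the contradiction argument correctly yields $\dH(\mu)\ge d-1$. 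What the Frostman route buys in exchange for more machinery is generality (it survives when the constraint is not aligned with coordinate directions, which is the situation of Theorems~\ref{rwthm} and~\ref{Uncertainty}); your route buys elementarity but is tied to this specific operator.

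The proof is nonetheless incomplete at the point you yourself identify as the crux: the local integrability of $g_j$ is asserted as ``plausible'' but never proved, and without it the slicing theorem cannot be invoked at all. This is a genuine gap, and it is precisely the step that consumes the hypothesis that \emph{all} components of $\mu$ are finite measures, as your Heaviside-times-delta example correctly illustrates. The multiplier argument you gesture at can be made to work, but it requires actual anisotropic potential theory: writing $\widehat{g_j}=\sum_k\Phi_k\widehat{\mu_k}$ with $\Phi_k(\xi)=(i\xi_j)^{m_j-1}\overline{(i\xi_k)^{m_k}}\big/\sum_l\xi_l^{2m_l}$, one checks that each $\Phi_k$ is smooth away from the origin and homogeneous of the strictly negative degree $-1/m_j$ under the dilations $\xi_l\mapsto t^{1/m_l}\xi_l$, so that its kernel is locally integrable and its convolution with the finite measures $\mu_k$ lies in $L^1_{\mathrm{loc}}$; one must also justify that the two sides differ only by a polynomial (a distribution with Fourier support at the origin), which is harmless for the slicing. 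Until this lemma is stated and proved, the argument does not close.
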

Here we repeat the approach of~\cite{SW} on the level of martingales, using more precise embedding theorem that we have for $m$-regular models (see Theorem~\ref{Uncertainty} below). From the point of view of Hausdorff dimension estimates, van Schaftingen's condition has also a simple geometric meaning: it prevents measures from concentrating on $(d-1)-$dimensional hyperplanes. This observation was developed in~\cite{AW} and also leads to a martingale result.

Finally, we also investigate a martingale analog of Mazya's trace theorem (see~\cite{Mazja}) in the last section (see Theorems~\ref{TraceTheoremTrivial} and~\ref{TraceTheorem} below).
\begin{Th}[\cite{Mazja}]\label{MazjaThm}
The space~$\dot{W}_1^1(\mathbb{R}^d)$ embeds into~$L_p(\nu)$ if and only if~$\nu(B_r(x))^{\frac{1}{p}} \lesssim r^{d-1}$ for any Euclidean ball~$B_r(x)$.
\end{Th}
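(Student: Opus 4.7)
The plan is to prove the ``only if'' and ``if'' directions separately. The first is a standard test-function argument, while the second is the substantive content, which I would attack by reducing to a weak-type estimate for a Riesz potential.

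For necessity, fix a ball $B_r(x_0)\subset\mathbb{R}^d$ and substitute into the embedding a smooth cut-off $\varphi\in C_0^\infty(\mathbb{R}^d)$ with $\varphi\equiv 1$ on $B_r(x_0)$, $\operatorname{supp}\varphi\subset B_{2r}(x_0)$, and $|\nabla\varphi|\lesssim r^{-1}$. Then $\|\nabla\varphi\|_{L_1}\lesssim r^{d-1}$ and $\|\varphi\|_{L_p(\nu)}\geq\nu(B_r(x_0))^{1/p}$, so the embedding immediately forces $\nu(B_r(x_0))^{1/p}\lesssim r^{d-1}$ with a constant independent of $r$ and $x_0$.

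For sufficiency, the first step is the pointwise representation
\begin{equation*}
|f(x)|\lesssim\int_{\mathbb{R}^d}\frac{|\nabla f(y)|}{|x-y|^{d-1}}\,dy=(\I_1|\nabla f|)(x),
\end{equation*}
obtained from the fundamental theorem of calculus along rays through $x$, averaged over directions. This reduces the embedding to the boundedness $\I_1\colon L_1(\mathbb{R}^d,dx)\to L_p(\nu)$, and by the layer-cake formula it suffices to prove the weak-type inequality
\begin{equation*}
\nu\bigl(\{\I_1 g>t\}\bigr)\lesssim\bigl(\|g\|_{L_1}/t\bigr)^p,\qquad g\geq 0,\ t>0.
\end{equation*}
I would verify this by a Vitali covering: the condition $\I_1 g(x)>t$ forces a scale $r(x)$ at which $\int_{B_{r(x)}(x)} g\gtrsim t\,r(x)^{d-1}$ (split the potential into dyadic annuli and extract the dominant one). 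Covering $\{\I_1 g>t\}$ by a disjoint subcollection $\{B_{r_i}(x_i)\}$ of such balls (with bounded overlap of concentric enlargements) and combining the hypothesis with the elementary inequality $\sum_i b_i^p\leq(\sum_i b_i)^p$ for $b_i\geq 0$, $p\geq 1$ yields
\begin{equation*}
\nu(\{\I_1 g>t\})\lesssim\sum_i r_i^{(d-1)p}\lesssim\sum_i\Bigl(\tfrac{1}{t}\int_{B_{r_i}(x_i)}g\Bigr)^p\lesssim\Bigl(\tfrac{\|g\|_{L_1}}{t}\Bigr)^p.
\end{equation*}

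The hard part is this multi-scale covering step. The ball condition is scale-sensitive in a delicate way, and the exponent $d-1$ matches the scaling of $\I_1$ precisely, so a cruder single-scale argument cannot close the inequality; one really needs bounded overlap across all scales to convert a pointwise lower mass bound into a global $L_1$-control on $g$. An alternative route, closer in geometric spirit to the martingale arguments of the paper, would use the coarea formula: for $p=1$ it reduces the embedding to the isoperimetric-type inequality $\nu(E)\lesssim\operatorname{Per}(E)$ for sets of finite perimeter (proved from the ball condition by a Whitney decomposition of $E$ combined with Federer--Fleming summation), and the case $p>1$ is recovered from $p=1$ by dyadic truncation of $f$ and $\ell^p$-summation over level sets. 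This route confronts the same scale-matching difficulty at its core.
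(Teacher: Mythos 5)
First, a point of context: the paper does not prove Theorem~\ref{MazjaThm} at all --- it is quoted from Maz'ya's book as motivation for the martingale trace theorems --- so your proposal has to be judged on its own terms. Your necessity argument is correct. The sufficiency argument, however, breaks at its pivotal reduction. The statement ``$\I_1$ is bounded from $L_1(\R^d,dx)$ to $L_p(\nu)$'' is \emph{false} even in the model case $\nu=$ Lebesgue measure, $p=\tfrac{d}{d-1}$, where the ball condition holds with equality: this is exactly the failure of the endpoint Hardy--Littlewood--Sobolev inequality that the present paper takes as its starting point (Lemma~\ref{CounterExampleToHLS} is its martingale incarnation; in $\R^d$ take $g$ an approximate identity, so that $\I_1 g\sim |x|^{1-d}$, which fails to lie in $L_{d/(d-1)}$ near the origin). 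Correspondingly, the step ``by the layer-cake formula it suffices to prove the weak-type inequality'' cannot close: inserting $\nu(\{\I_1 g>t\})\lesssim(\|g\|_{L_1}/t)^p$ into $p\int_0^\infty t^{p-1}\nu(\{\I_1 g>t\})\,dt$ produces $\int t^{-1}\,dt$, a logarithmic divergence; weak type at the same exponent never self-improves to strong type. What rescues the embedding is that $|\nabla f|$ is not an arbitrary $L_1$ density: one must apply the weak-type estimate to the truncations $f_k=\min(\max(|f|-2^k,0),2^k)$, use $\nu(\{|f|>2^{k+1}\})\le\nu(\{f_k\ge 2^k\})\lesssim\big(2^{-k}\|\nabla f_k\|_{L_1}\big)^p$, and sum via $\sum_k\|\nabla f_k\|_{L_1}\le\|\nabla f\|_{L_1}$ together with $\ell_1\hookrightarrow\ell_p$. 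This truncation step is where the gradient structure enters, and it is the Euclidean counterpart of the convex/flat decomposition on which the paper's martingale arguments rest.

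There is a secondary gap in your proof of the weak-type bound itself. From $\I_1 g(x)>t$ one cannot extract a single scale with $\int_{B_{r(x)}(x)}g\gtrsim t\,r(x)^{d-1}$ at a uniform constant: the dyadic decomposition of the potential has infinitely many annuli, each of which may contribute an arbitrarily small fraction of $t$. Your Vitali argument proves the weak-type estimate for the fractional maximal function $M_{d-1}g(x)=\sup_r r^{1-d}\int_{B_r(x)}g$, which does not pointwise dominate $\I_1 g$. The weak-type bound for $\I_1$ under the ball condition is true (for $p>1$) but is usually obtained by duality: $t\,\nu(E_t)\le\int g\cdot\I_1[\nu\chi_{E_t}]\le\|g\|_{L_1}\sup_y\I_1[\nu\chi_{E_t}](y)$, and splitting $\I_1[\nu\chi_{E_t}](y)\asymp\int_0^\infty r^{-d}\nu(E_t\cap B_r(y))\,dr$ at the scale where $r^{(d-1)p}\asymp\nu(E_t)$ gives $\I_1[\nu\chi_{E_t}](y)\lesssim\nu(E_t)^{1-1/p}$. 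Your ``alternative route'' via the coarea formula and the boxing-type inequality $\nu(E)^{1/p}\lesssim\operatorname{Per}(E)$ is in fact the standard correct proof; note only that for $p>1$ one needs that inequality with the exponent $1/p$ directly (it does not follow from the case $p=1$), after which the layer-cake representation is closed by Minkowski's integral inequality rather than by truncation.
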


We mention two very recent preprints~\cite{APHR} and~\cite{vSS} that provide new results in the classical Euclidean setting that are closely related to the material of the present paper. 

The authors are grateful to Fedor Nazarov for correcting a mistake in an early version of this paper.

\subsection{The martingale model}

In next subsections we explain how to translate previously mentioned results into the language of $m$-regular martingales. Let us begin with their definition. Suppose that ~$m \geq 3$ is a fixed natural number and~$\F = \{\F_n\}_{n \geq 0}$ is an~$m$-uniform filtration (i.e. each atom in~$\F_n$ is split into~$m$ atoms in~$\F_{n+1}$ having equal masses). The set of atoms of the algebra~$\F_n$ is denoted by~$\AF_n$. There is a natural tree structure on the set of all atoms: the atom~$\omega' \in \AF_{n+1}$ is a son of~$\omega \in \AF_{n}$ if~$\omega' \subset \omega$. In such a case,~$\omega$ is the parent of~$\omega$, and we denote the parent of~$\omega'$ by~$(\omega')^{\uparrow}$.

We may treat our probability space equipped with an~$m$-uniform filtration as a metric space~$\mathbb{T}$. The points of~$\mathbb{T}$ are infinite paths in the tree of atoms (we start from the whole space, then choose an atom in~$\AF_1$, then pass to one of its sons in~$\AF_2$, and so on). The distance between the two paths~$\gamma_1$ and~$\gamma_2$ is defined by the standard formula
\begin{equation*}
\dist(\gamma_1,\gamma_2) = m^{-d}, \quad d = \max\{n\mid \gamma_1(j) = \gamma_2(j) \hbox{ for all } j<n\}.
\end{equation*}  
With this metric,~$\mathbb{T}$ becomes a compact metric space, so we can introduce Hausdorff measures there and define the Hausdorff dimension of sets and measures.   

Consider the space
\begin{equation*}
V = \{v\in\mathbb{R}^m\mid \sum_{1}^m v_j = 0\}.
\end{equation*}
For a martingale~$F$ adapted to~$\F$, let~$\{f_n\}_{n \geq 1}$ be the sequence of its martingale differences:
\begin{equation*}
f_{n+1} = F_{n+1} - F_n.
\end{equation*} 
For each atom~$\omega \in \AF_n$, the function~$f_{n+1}$ attains at most~$m$ values on~$\omega$, and thus, might be identified with an element of~$V$. Namely, for any~$\omega$, there is a bijective map
\begin{equation*}
\J_{\omega}\colon [1..m]\to \{\omega' \in \AF_{n+1}\mid \omega = (\omega')^\uparrow\}
\end{equation*}
which is naturally extended to the mapping between~$V$ and  restrictions of all possible martingale differences~$f_{n+1}$ to~$\omega$. This extended map is also called~$\J_{\omega}$.
For each~$n$ and each~$\omega$, we fix~$\J_{\omega}$. 

If the martingale~$F$ is~$\mathbb{R}^{\ell}$-valued, then~$f_{n+1}|_{\omega}$ might be naturally identified with an element of~$V^{\ell}$ (we apply~$\J_{\omega}$ to each coordinate), here~$\omega \in\AF_n$. Here and in what follows the norm~$\|\cdot\|$ always denotes the Euclidean norm in~$\mathbb{R}^\ell$.

\subsection{Hardy--Littlewood--Sobolev and van Schaftingen's inequalities}
Our main object of study is the Riesz potential:
\begin{equation*}
\Big(\I_{\alpha}[F]\Big)_N = \sum\limits_{n=1}^N m^{-\alpha n}f_n; \quad \hbox{or simply\ } \I_\alpha[f] = \sum\limits_{n\geq 1} m^{-\alpha n}f_n.
\end{equation*} 
Clearly,~$\I_{\alpha}$ maps martingales to martingales. Quantitative bounds for norms are more interesting. %Here and in what follows we always assume~$F_0 = 0$.
\begin{Th}[Hardy--Littlewood--Sobolev]
	For any~$p \in (1,\infty)$ and any~$q \in (p,\infty)$, the operator~$\I_{\frac{q-p}{qp}}$ maps~$L_p$ to~$L_q$.
\end{Th}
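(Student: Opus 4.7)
The plan is to mimic Hedberg's classical proof of the Hardy--Littlewood--Sobolev inequality, replacing integration over Euclidean balls with the dyadic conditional expectation structure native to the $m$-regular filtration $\F$. Writing $\alpha = \tfrac{1}{p}-\tfrac{1}{q}$, the key estimate is the pointwise bound
\begin{equation*}
|\I_\alpha[F](x)| \lesssim \bigl(\mathcal{M}F(x)\bigr)^{p/q} \|F\|_p^{1-p/q},
\end{equation*}
where $\mathcal{M}F = \sup_n |F_n|$ is the martingale maximal function. Once this is in hand, raising to the $q$-th power, integrating over $\T$, and invoking Doob's maximal inequality $\|\mathcal{M}F\|_p \lesssim \|F\|_p$ instantly produces
\begin{equation*}
\|\I_\alpha[F]\|_q^q \lesssim \|F\|_p^{q-p}\,\|\mathcal{M}F\|_p^{p} \lesssim \|F\|_p^q.
\end{equation*}

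To obtain the pointwise bound I would split the defining series at an arbitrary level $N$. For the high-frequency tail $n>N$, each martingale difference obeys $|f_n(x)| \leq |F_n(x)| + |F_{n-1}(x)| \leq 2\mathcal{M}F(x)$, and summing the geometric series in $m^{-\alpha n}$ contributes $\lesssim m^{-\alpha N}\mathcal{M}F(x)$. For the low-frequency head $n \leq N$, the atom $\omega_n(x) \in \AF_n$ containing $x$ has measure $m^{-n}$, so H\"older's inequality yields
\begin{equation*}
|F_n(x)| = \frac{1}{|\omega_n(x)|}\Bigl|\int_{\omega_n(x)} F\, d\mu\Bigr| \leq |\omega_n(x)|^{-1/p}\|F\|_p = m^{n/p}\|F\|_p,
\end{equation*}
so $|f_n(x)| \leq 2 m^{n/p}\|F\|_p$, and summing the resulting geometric series in $m^{n(1/p-\alpha)} = m^{n/q}$ contributes $\lesssim m^{N/q}\|F\|_p$. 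Equalizing the two contributions by choosing $N$ with $m^{N} \sim (\mathcal{M}F(x)/\|F\|_p)^{p}$, which is possible precisely because $\alpha + 1/q = 1/p$, produces the Hedberg estimate.

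The argument is entirely routine and I do not anticipate a substantive obstacle. Essentially, $\T$ is a $1$-dimensional space of homogeneous type (balls of radius $m^{-k}$ have mass comparable to $m^{-k}$), and the discrete kernel $\sum_n m^{-\alpha n}$ behaves like the fractional kernel $\dist(x,y)^{\alpha-1}$ on this space, so the classical HLS machinery goes through verbatim. The only mild subtlety is the bookkeeping of exponents in the balancing step, which is forced by dimensional analysis.
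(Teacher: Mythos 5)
Your argument is correct, but it is worth noting that the paper does not prove this theorem at all: it is stated as a known result and the proof is delegated to Watari's paper on multipliers for Walsh--Fourier series, so there is no internal proof to compare against. Your Hedberg-type argument is a legitimate, self-contained alternative. The two halves of the splitting check out: for $n>N$ the bound $|f_n|\le 2\mathcal{M}F$ and the convergence of $\sum_{n>N}m^{-\alpha n}$ (here $\alpha=\tfrac1p-\tfrac1q>0$) give the tail estimate, and for $n\le N$ the Jensen/H\"older bound $|F_n(x)|\le m^{n/p}\|F\|_p$ (which uses that an $L_p$-bounded martingale with $p>1$ is closable, i.e. $F_n=\E(F_\infty\mid\F_n)$ --- harmless here but worth saying) gives the head estimate, and the exponents balance exactly as you claim since $\alpha+\tfrac1q=\tfrac1p$. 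Two small points of bookkeeping: the optimal $N$ must be a nonnegative integer, so when $\mathcal{M}F(x)\le\|F\|_p$ you should take $N=0$ and observe that the resulting bound $|\I_\alpha[F](x)|\lesssim\mathcal{M}F(x)$ is still dominated by $\mathcal{M}F(x)^{p/q}\|F\|_p^{1-p/q}$ precisely in that regime; and rounding $N$ to an integer otherwise costs only a constant. With those remarks the Doob maximal inequality finishes the proof. What your route buys is elementarity and transparency (no interpolation, no multiplier theory); what the cited route via Watari presumably buys is the sharper endpoint information recorded in the subsequent remark, namely the weak-type $L_1\to L_{p,\infty}$ bound at $p=1$, which your Hedberg argument does not reach because Doob's inequality fails at $L_1$.
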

\begin{Rem}
	The inequality is sharp in the sense that~$\alpha \geq \frac{q-p}{qp}$ provided~$\I_{\alpha}$ is~$L_p\to L_q$ continuous. To see this, we prove the inequality ``on a single scale'' (i.e. when all the martingale differences but one are zero):
	\begin{equation}\label{LocalEmbedding1}
	\|f_n\|_{L_q} = m^{-\frac{n}{q}}\Big(\sum\limits_{w\in \AF_n}|f_n(\omega)|^q\Big)^\frac1q \stackrel{\scriptscriptstyle \ell_p\hookrightarrow \ell_q}{\leq} m^{-\frac{n}{q}}\Big(\sum\limits_{w\in \AF_n}|f_n(\omega)|^p\Big)^\frac1p = m^{n(\frac{1}{p}-\frac1q)}\|f_n\|_{L_p}.
	\end{equation}
	Clearly, these inequalities are sharp (pick~$f_n$ supported on a single atom of~$\F_{n-1}$).
\end{Rem}
\begin{Rem}
	In the limit case~$p=1$, the operator~$\I_{\frac{p-1}{p}}$ maps~$L_1$ to the Lorentz space~$L_{p,\infty}$. 
\end{Rem}
We refer the reader to~\cite{BerghLofstrom} for background on Lorentz spaces. For the proof of the martingale Hardy--Littlewood--Sobolev inequality, see~\cite{Watari}.
\begin{Le}\label{CounterExampleToHLS}
	The operator~$\I_{\frac{p-1}{p}}$ does not map~$L_1$ to~$L_p$.
\end{Le}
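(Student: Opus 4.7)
The plan is to exhibit an explicit family of martingales that are uniformly bounded in $L_1$ but whose Riesz potentials under $\I_{(p-1)/p}$ have $L_p$ norms tending to infinity. Since the single-scale estimate~\eqref{LocalEmbedding1} is tight with no room to spare, a pure dilation cannot work; the counterexample must combine many scales at once. The natural candidate is a ``delta-like'' function spread through all resolutions by its own conditional expectations.

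Concretely, for each $N \geq 1$ I fix an atom $\omega_N \in \AF_N$ and for $k = 0, 1, \dots, N$ let $\omega_k \in \AF_k$ denote its ancestor. Consider the martingale $F^{(N)}$ with $F^{(N)}_k = m^k \chi_{\omega_k}$ for $k \leq N$ (constant thereafter); equivalently, $F^{(N)} = \E[m^N \chi_{\omega_N} \mid \F_\cdot]$. Then $\|F^{(N)}\|_{L_1} = 1$ uniformly in $N$. Its differences are
\begin{equation*}
f_k = m^{k-1}(m-1)\chi_{\omega_k} - m^{k-1}\chi_{\omega_{k-1} \setminus \omega_k}, \qquad 1 \leq k \leq N.
\end{equation*}

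Next, I evaluate $\I_\alpha[F^{(N)}]$ (with $\alpha = (p-1)/p$, so $1-\alpha = 1/p$) on an atom $\omega'_k \in \AF_k$ that is a \emph{sibling} of $\omega_k$, i.e.\ $\omega'_k \subset \omega_{k-1}$ and $\omega'_k \neq \omega_k$. On such an atom, $f_j = m^{j-1}(m-1)$ for $j < k$, $f_k = -m^{k-1}$, and $f_j = 0$ for $j > k$. Summing the geometric series,
\begin{equation*}
\I_\alpha[F^{(N)}](\omega'_k) = \frac{m-1}{m}\sum_{j=1}^{k-1} m^{j/p} - \frac{1}{m}\, m^{k/p} = c_{m,p}\, m^{k/p} + O(1),
\end{equation*}
where
\begin{equation*}
c_{m,p} = \frac{1}{m}\cdot \frac{m - m^{1/p}}{m^{1/p} - 1}
\end{equation*}
is a strictly positive constant for $p > 1$ (this is exactly where the cancellation encoded in $1-\alpha = 1/p > 0$ shows up).

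Finally, at each level $k \in \{1, \dots, N\}$ there are $m-1$ sibling atoms $\omega'_k$, each of mass $m^{-k}$, on which $|\I_\alpha[F^{(N)}]| \gtrsim m^{k/p}$. Hence each level contributes $\gtrsim (m-1) \cdot m^{-k} \cdot (m^{k/p})^p = m-1$ to $\|\I_\alpha[F^{(N)}]\|_{L_p}^p$, and summing,
\begin{equation*}
\|\I_\alpha[F^{(N)}]\|_{L_p}^p \gtrsim (m-1)\, N,
\end{equation*}
while $\|F^{(N)}\|_{L_1} = 1$. Sending $N \to \infty$ shows that $\I_\alpha$ is unbounded from $L_1$ to $L_p$. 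The only delicate point is the geometric-series computation confirming that the leading coefficient $c_{m,p}$ is genuinely nonzero (so that no miraculous cancellation saves the estimate); everything else is bookkeeping.
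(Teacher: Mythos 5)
Your construction is exactly the paper's: the truncated conditional expectations $F^{(N)}_k=m^{k}\chi_{\omega_k}$ of a point mass coincide (up to an additive constant, which does not change the differences $f_k$) with the multiplicative ``delta-measure'' martingale built from $v_\delta=(m-1,-1,\dots,-1)$ in the paper's proof, and the divergence mechanism is the same. Your explicit evaluation on the sibling atoms, with the verification that $c_{m,p}>0$, is a correct and slightly more careful rendering of the paper's ``almost disjoint supports'' step.
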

\begin{proof}
	Consider a special vector~$v_{\delta}\in V$:
	\begin{equation}\label{DeltaVector}
	v_{\delta} = m\cdot(1,0,0,\ldots,0) - (1,1,1,\ldots,1) = (m-1,-1,-1,\ldots,-1).
	\end{equation}
	Consider a martingale~$F$ such that for any~$\omega \in \AF_{n}$, we have the identity
	\begin{equation*}
	f_{n+1}|_{\omega} = F_{n}(\omega)\cdot \J_{\omega}[v_{\delta}].
	\end{equation*}
	In other words, if~$h_n$ is given by the formula
	\begin{equation*}
	h_{n+1} = \sum\limits_{\omega \in \AF_n}\J_{\omega}[v_{\delta}]\cdot\chi_{\omega}, \quad \hbox{then} \quad F_n = \prod\limits_{i=1}^{n} (1+h_i) - 1,
	\end{equation*}
	and therefore,~$f_{n+1}=h_{n+1}F_n$. In particular, it is easy to see that~$\E |F_n| \leq 2$. On the other hand,~$\|f_n\|_{L_p} = m^{\frac{p-1}{p}n}$ and these functions have ``almost disjoint supports'' in the sense that
	\begin{equation*}
	\|\I_{\frac{p-1}{p}}[F]\|_{L_p}^p \asymp \sum\limits_{n=1}^{\infty}\|m^{-\frac{p-1}{p}n}f_n\|_{L_p}^p = \infty.
	\end{equation*}
\end{proof}
The heuristics behind the proof above is that the martingale~$F$ represents a delta-measure via formula~\eqref{MeasureGeneratesAMartingale} below, which is an extremal point of the unit ball in~$L_1$. It seems that if one excludes these ``bad'' points in a uniform way, then the Hardy--Littlewood--Sobolev inequality may become valid. There is an encouraging result.
\begin{Th}
	The operator~$\I_{\frac{p-1}{p}}$ maps~$H_1$ to~$L_{p,1}$.
\end{Th}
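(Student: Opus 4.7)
The plan is to use the atomic decomposition of martingale $H_1$ and reduce the problem to showing that the Riesz potential of a single $H_1$-atom has bounded $L_{p,1}$-norm, with a constant independent of the atom.

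First, I would recall that any $F \in H_1$ admits a decomposition $F = \sum_k \lambda_k a_k$ where each $a_k$ is a martingale $H_1$-atom, i.e., there exists $\omega_k \in \AF_{N_k}$ such that $a_k$ is supported on $\omega_k$, $\E a_k = 0$, and $\|a_k\|_\infty \leq |\omega_k|^{-1} = m^{N_k}$; moreover $\sum_k |\lambda_k| \lesssim \|F\|_{H_1}$.

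Second, I would analyze the martingale differences $\{a_n\}$ of a single atom $a$ supported on $\omega \in \AF_N$. Since $a$ is supported on $\omega$ and has mean zero, $\E(a \mid \F_n) = 0$ for $n \leq N-1$ (the relevant atom either contains $\omega$, in which case the conditional average equals zero, or is disjoint from $\omega$). Hence $a_n \equiv 0$ for $n \leq N$. For $n > N$, each $a_n$ is supported on $\omega$ and satisfies $\|a_n\|_\infty \leq 2 \|a\|_\infty \leq 2 m^N$. Plugging into the definition of the Riesz potential with $\alpha = \frac{p-1}{p}$, so that $1-\alpha = \frac1p$, gives
\begin{equation*}
\|\I_{\alpha}[a]\|_\infty \leq \sum_{n > N} m^{-\alpha n}\|a_n\|_\infty \lesssim m^N\sum_{n > N} m^{-\alpha n} \lesssim m^{N(1-\alpha)} = m^{N/p},
\end{equation*}
and $\I_{\alpha}[a]$ is again supported on $\omega$.

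Third, I would invoke the elementary fact that for any $g$ supported on a set $E$,
\begin{equation*}
\|g\|_{L_{p,1}} \lesssim \|g\|_\infty\, |E|^{1/p},
\end{equation*}
which follows from the definition $\|g\|_{L_{p,1}} = p\int_0^\infty t^{1/p-1} g^*(t)\,dt$ since $g^*(t) \leq \|g\|_\infty$ and $g^*(t) = 0$ for $t > |E|$. Applied to $g = \I_{\alpha}[a]$ with $|E| = |\omega| = m^{-N}$, this yields the uniform bound $\|\I_{\alpha}[a]\|_{L_{p,1}} \lesssim 1$.

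Finally, since $L_{p,1}$ is a Banach space for $p > 1$ (with the equivalent norm obtained by replacing $g^*$ with $g^{**}$), the triangle inequality gives
\begin{equation*}
\|\I_{\alpha}[F]\|_{L_{p,1}} \leq \sum_k |\lambda_k|\,\|\I_{\alpha}[a_k]\|_{L_{p,1}} \lesssim \sum_k |\lambda_k| \lesssim \|F\|_{H_1}.
\end{equation*}
There is no real obstacle here; the only two points that require care are the verification that atoms are annihilated by $\F_n$-conditioning for $n \leq N-1$ (so that the series defining $\I_\alpha[a]$ starts at $n = N+1$ and is effectively geometric) and the triangle inequality in $L_{p,1}$, which for $p > 1$ is classical.
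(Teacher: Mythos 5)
The paper states this theorem without any proof (it appears only as an ``encouraging result'' motivating the later, harder Theorem~\ref{vanSchaftingen}), so there is nothing to compare against; your argument stands on its own, and it is correct. The atomic decomposition, the observation that an atom supported on $\omega\in\AF_N$ with vanishing conditional expectation has $a_n\equiv 0$ for $n\le N$, the resulting bound $\|\I_\alpha[a]\|_\infty\lesssim m^{N/p}$ with support in $\omega$, the elementary estimate $\|g\|_{L_{p,1}}\lesssim\|g\|_\infty|E|^{1/p}$, and the normability of $L_{p,1}$ for $p>1$ all fit together as you describe. The one step you should justify or cite explicitly is the atomic decomposition itself in the form you use it: the general martingale $H_1$ atomic decomposition produces atoms associated with stopping times, supported on sets $\{\tau<\infty\}$ that need not be single atoms of the filtration; to get atoms supported on a single $\omega\in\AF_N$ with $\|a\|_\infty\le m^N$ you need the regularity of the filtration (which holds here, since each atom splits into $m$ children of equal mass --- one simply restricts a stopping-time atom to the atoms of $\F_n$ composing $\{\tau=n\}$ and renormalizes, the coefficients summing correctly because $\sum|\omega'|=P(\tau<\infty)$). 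With that reference (e.g.\ to the $q$-martingale setting of Chao--Janson, which the paper already cites) the proof is complete. It is worth noting that your argument is purely ``single-scale plus triangle inequality'' and makes no use of the structural conditions on $W$; this is consistent with the paper's narrative, where the content of Theorem~\ref{vanSchaftingen} is precisely to replace the $H_1$ hypothesis by the weaker second structural condition via the convex/flat atom decomposition.
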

Here~$H_1$ is the Hardy space, a proper subspace of~$L_1$ consisting of those martingales~$F$, for which the maximal function~$F^*$ is summable. It is easy to see that our ``delta-measure'' constructed in the proof of Lemma~\ref{CounterExampleToHLS} does not lie in the Hardy space. On Euclidean spaces, any measure in the Hardy space is uniformly continuous with respect to the Lebesgue measure. We claim that this is way too strong and the Hardy--Littlewood--Sobolev inequality is true for any but delta measure (in a certain uniform sense). In particular, all fractal measures are welcome.  

Theorem~\ref{vanthm} says that, in the world of Euclidean spaces, the Riesz potentials act on some~$L_1$-based Sobolev spaces. Now we will try to translate this theorem (to be more precise, Conjectures~\ref{MainConjecture} and~\ref{StrongMainConjecture}), to the martingale world.

Let~$W$ be a subspace of~$V^{\ell}$. Consider the subspace~$\W$ of the~$\mathbb{R}^\ell$-valued martingale space~$L_1$:
\begin{equation*}
\W = \Big\{F \in L_1(\mathbb{R}^\ell)\,\Big|\; \forall n,\ \omega \in \AF_n \quad f_{n+1}|_\omega \in \J_{\omega}[W]\Big\}.
\end{equation*}
One should think of~$\W$ as of a Sobolev-type space of zero smoothness (see Section~\ref{S5} below). We introduce two structural conditions.

{\bf The first structural condition} is that~$W$ does not contain the non-zero vectors~$v\otimes a$ where~$v\in V$ and~$a\in \mathbb{R}^\ell$.
\begin{Th}[Janson \cite{Janson}]\label{Janson}
	If~$W$ satisfies the first structural condition, then~$\W = H_1$ in the sense that the norms in these two spaces are equivalent. 
\end{Th}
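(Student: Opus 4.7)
Since the inequality $\|F\|_{L_{1}}\le\|F^{*}\|_{L_{1}}$ is automatic, the content of the theorem is the reverse estimate $\|F^{*}\|_{L_{1}}\lesssim\|F\|_{L_{1}}$ on $\W$. My plan is to make the first structural condition quantitative, to propagate it to a martingale-energy inequality by telescoping, and then to extract a maximal bound by a Calder\'on--Zygmund stopping-time argument.

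The geometric heart of the matter is the following lemma: there exists $c=c(W)>0$ such that
\[
\frac{1}{m}\sum_{j=1}^{m}\|w(j)+u\|\ \ge\ \|u\|+c\,\frac{\|w\|^{2}}{\|w\|+\|u\|}\qquad\text{for every }w\in W,\ u\in\R^{\ell}.
\]
The unweighted inequality $\frac{1}{m}\sum_{j}\|w(j)+u\|\ge\|u\|$ follows from Jensen's inequality together with $\sum_{j}w(j)=0$. Equality forces every $w(j)+u$ to be a non-negative multiple of $u$, i.e.\ $w=v\otimes u$ is a rank-one tensor; the first structural condition rules this out unless $w=0$, and a compactness argument that exploits the joint positive homogeneity of both sides in $(w,u)$ upgrades the strict inequality to the displayed quantitative form. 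Applied on every atom $\omega\in\AF_{n}$ with $u=F_{n}(\omega)$ and $w$ such that $f_{n+1}|_{\omega}=\J_{\omega}[w]$, and telescoped over $n$, the lemma produces the energy estimate
\[
\|F\|_{L_{1}}\ \ge\ \|F_{0}\|_{L_{1}}+c\sum_{n\ge 0}\E\bigg[\frac{\|f_{n+1}\|^{2}}{\|f_{n+1}\|+\|F_{n}\|}\bigg],
\]
which quantifies that every jump of a martingale in $\W$ is paid for in $L_{1}$, in direct contrast with the delta-like construction of Lemma \ref{CounterExampleToHLS}.

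To pass from this energy bound to the $H_{1}$ estimate I would run a stopping time $\tau_{\lambda}=\inf\{n:|F_{n}|>\lambda\}$ and split the bad set $\{\tau_{\lambda}<\infty\}$ according to whether the final jump is comparable to $\lambda$. On $\{\|f_{\tau_{\lambda}}\|\ge\lambda/2\}$ the quadratic term $\|f_{\tau_{\lambda}}\|^{2}/(\|f_{\tau_{\lambda}}\|+\|F_{\tau_{\lambda}-1}\|)$ in the energy inequality is already of order $\lambda$, yielding $\mu\{\|f_{\tau_{\lambda}}\|\ge\lambda/2\}\lesssim \lambda^{-1}\|F\|_{L_{1}}$ with a summable dyadic decay. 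On $\{\|f_{\tau_{\lambda}}\|<\lambda/2\}$ we necessarily have $|F_{\tau_{\lambda}-1}|>\lambda/2$, so this event sits inside $\{\tau_{\lambda/2}<\tau_{\lambda}\}$, and a recursive argument on the layers $\lambda,\lambda/2,\lambda/4,\ldots$ reduces the estimate to the previous large-jump case. The main technical obstacle is this final conversion: the energy bound is quadratic in $\|f_{n+1}\|$ whenever $\|F_{n}\|$ dominates, while the maximal function demands a linear control, so one must arrange the dyadic recursion so that its losses compound to a constant depending only on $W$, not on $F$. Overcoming this requires exhibiting that the structural gap $c(W)$ is preserved under repeated stopping and restriction of the martingale to the bad atoms, which is the genuinely delicate step in the argument.
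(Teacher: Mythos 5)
The paper does not prove this statement at all: it is imported verbatim from Janson \cite{Janson} (see also \cite{ChaoJanson}), so there is no internal argument to compare yours against, and your attempt must be judged on its own. Your key lemma is correct and is the right geometric input: it is the ``first-structural-condition'' analogue of Lemma~\ref{InequalityForNon-Intensive2}, and indeed the decomposition \eqref{FirstDifference}--\eqref{SecondDifference} in that proof shows exactly that $\frac1m\sum_j\|u+w(j)\|-\|u\|\gtrsim\frac1m\sum_j\|\pi_{u^\perp}[w(j)]\|^2/(\|u\|+\|w\|)$, and the first structural condition plus compactness gives $\sum_j\|\pi_{u^\perp}[w(j)]\|^2\gtrsim\|w\|^2$ uniformly because $W$ stays at a positive distance from all rank-one tensors $v\otimes u$. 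The telescoped energy estimate $\sum_n\E\big[\|f_{n+1}\|^2/(\|f_{n+1}\|+\|F_n\|)\big]\lesssim\|F\|_{L_1}$ is therefore legitimate.

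The conversion to $\|F^*\|_{L_1}\lesssim\|F\|_{L_1}$ is where the proof breaks, and not only at the step you flag. First, your claimed bound $\mu\{\|f_{\tau_\lambda}\|\ge\lambda/2\}\lesssim\lambda^{-1}\|F\|_{L_1}$ does \emph{not} have ``summable dyadic decay'': $\sum_k 2^k\cdot 2^{-k}\|F\|_{L_1}$ diverges, so a weak-type bound per level is useless; what saves the big-jump part is that a single point contributes to only geometrically many levels $\lambda\le 2\|f_{\tau_\lambda}\|$, so all levels draw on one energy budget --- you must disjointify, not estimate level by level. Second, and fatally, the recursion for the no-big-jump case does not close: the containment $\{\tau_\lambda<\infty,\ \|f_{\tau_\lambda}\|<\lambda/2\}\subset\{\tau_{\lambda/2}<\infty\}$ iterated over layers gives $\mu\{F^*>2^K\}\le\sum_{k\le K}\mu\{\mathrm{BJ}_{2^k}\}+\ldots$, whose weighted sum over $K$ diverges; the energy inequality simply does not forbid $\|F_n\|$ from creeping from $\lambda/2$ to $\lambda$ in $N$ steps of size $\lambda/N$ at total energy cost $O(\lambda/N)$, so no per-level payment of order $\lambda$ is available. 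A workable repair stays quadratic throughout: since $\|f_{n+1}\|+\|F_n\|\lesssim F_{n+1}^*$, the energy estimate gives $\E\big[S(F)^2/F^*\big]\lesssim\|F\|_{L_1}$ with $S(F)=(\sum_n\|f_n\|^2)^{1/2}$; Cauchy--Schwarz then yields $\|S(F)\|_{L_1}\lesssim\|F\|_{L_1}^{1/2}\|F^*\|_{L_1}^{1/2}$, and Davis's inequality $\|F^*\|_{L_1}\asymp\|S(F)\|_{L_1}$ closes a bootstrap, applied first to the stopped martingales $F_{\cdot\wedge N}$ (which remain in $\W$ and have $F^*$ a priori summable) and then passed to the limit. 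Without this (or an equivalent) final step, the argument is incomplete.
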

We have defined~$H_1$ as a space of scalar functions. The definition of the~$\mathbb{R}^\ell$-valued Hardy space is competely similar. Clearly, an~$\mathbb{R}^\ell$-valued function belongs to~$H_1$ if and only if each of its coordinates lies in the scalar~$H_1$. Our formulation of Theorem~\ref{Janson} slightly differs from the original since we prefer to describe the results in terms of the space~$W$ rather than the matrices (the space~$W$ is the image of matrices which generate martingale transforms in Janson's setting); one can find our definition with the space~$W$ in~\cite{ChaoJanson}.

Theorem~\ref{Janson} is a probabilistic analog of the Fefferman--Stein Theorem in~$\mathbb{R}^d$. To be more precise, the Fefferman--Stein Theorem describes the case of a very specific multiplier. The general Euclidean space case, which is a closer analog of Theorem~\ref{Janson}, was proved in~\cite{U}.

{\bf The second structural condition} is that~$W$ does not contain the non-zero vectors~$v\otimes a$ where~$v$ is of the form~\eqref{DeltaVector} in the sense that it has~$m-1$ equal coordinates. 

\begin{Rem}\label{ComplexScalars}
	Unlike Definition~\ref{OurSobolev} and formula~\eqref{OurBV}, we consider real-valued martingales here for simplicity of the notation. The case of complex-valued martingales does not differ much. Note that, however, the rank-one vectors~$v\otimes a$ considered in the second structural condition and formulas~\eqref{KappaDefinition},~\eqref{FormulaForDerivativeOfKappa} below should still have real-valued~$v$ only when we switch to complex scalars.
\end{Rem}

\begin{Th}\label{vanSchaftingen}
	If ~$W$ satisfies the second structural condition, then~$\I_{\frac{p-1}{p}}$ maps~$\W$ to~$L_{p,1}$ for any~$p > 1$.
\end{Th}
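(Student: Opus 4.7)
The strategy is to exploit the cancellation provided by the second structural condition---which is precisely what rules out the counter-example of Lemma~\ref{CounterExampleToHLS}---combined with interpolation and martingale square-function techniques.

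First, I would reduce the Lorentz target to a family of strong-type bounds. Since $L_{p,1}$ arises as the real-interpolation endpoint $(L_{q_0}, L_{q_1})_{\theta, 1}$ for any $q_0 < p < q_1$, it suffices to establish $\I_{\alpha}\colon \W \to L_{q}$ (with $\alpha$ depending on $q$, and with constants continuous in~$q$) on both sides of the target exponent, and then interpolate. Within each such strong-type bound, Burkholder's square function equivalence gives, for $q > 1$,
\[
\|\I_\alpha F\|_{L_q} \asymp \Bigl\|\bigl(\sum\nolimits_n m^{-2\alpha n}|f_n|^2\bigr)^{1/2}\Bigr\|_{L_q},
\]
reducing the problem to a weighted square function estimate that is transparent at each scale and each atom.

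Second, I would extract a quantitative version of the second structural condition. The analysis in the proof of Lemma~\ref{CounterExampleToHLS} identifies the delta tensors $v_\delta \otimes a$ as the unique (up to permutation) obstructions to the $L_1 \to L_p$ bound for the product-type martingales $F_n = \prod(1+h_i) - 1$, because these vectors are precisely the ones realizing the extremal identity $\|\mathbf{1}+v_\delta\|_{\ell^p}^p = m^p$. Since $W$ avoids every non-zero $v_\delta \otimes a$, a compactness argument on the unit sphere of $W$ produces a uniform gap: there exists $\varepsilon = \varepsilon(W) > 0$ such that, at every atom of $\F$, the contribution of a $W$-constrained martingale difference to the square function is strictly smaller, by a factor $1-\varepsilon$, than the extremal value attained by the corresponding unrestricted product martingale. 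This is the single-atom improvement available at every level of the filtration.

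Third---and this is the main obstacle---one must convert the single-atom gain into a genuine multi-scale $L_q$ (and then $L_{p,1}$) bound. A level-by-level triangle inequality over $n$ is too crude and essentially reproduces the counter-example. Instead, I would carry out a martingale Calder\'on--Zygmund decomposition at height $\lambda$: stop at the maximal atoms on which $|F_n|$ first exceeds $\lambda$, split $F = G + B$, control $\I_\alpha G$ by the strong-type HLS inequality applied to the $L_\infty$-bounded good part, and control $\I_\alpha B$ by a direct summation over stopping atoms in which the cancellation gap $\varepsilon$ is applied to each $W$-constrained difference. The delicate point is that neither $G$ nor $B$ inherits the $\W$-constraint from $F$, so the cancellation cannot be applied naively to the pieces; one must either work within a slightly enlarged class that is closed under such splittings, or design a decomposition adapted to $W$ directly---for instance through a Haar-type expansion in the fibres $\J_\omega[W] \subset V^\ell$ so that each ``atom'' of the expansion automatically satisfies the $\W$-condition. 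This step plays the role of integration by parts in van~Schaftingen's Euclidean proof, and I expect it to be the most technical part of the argument.
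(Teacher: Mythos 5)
Your proposal has the right general ingredients (a compactness-based quantitative form of the second structural condition, a stopping-time decomposition, interpolation to reach the Lorentz scale), but each step has a genuine gap. First, the reduction of the $L_{p,1}$ target to strong-type bounds is invalid as stated: real interpolation gives $(L_{q_0},L_{q_1})_{\theta,1}=L_{p,1}$ for a \emph{single} operator bounded into both endpoints, whereas you let $\alpha$ vary with $q$, so you would be interpolating a family of different operators $\I_{\alpha(q)}$ and cannot conclude anything about the fixed operator $\I_{(p-1)/p}$. The paper instead proves the stronger level-by-level inequality $\sum_n m^{-\frac{p-1}{p}n}\|f_n\|_{L_{p,1}}\lesssim\|F\|_{L_1}$ and sums by the triangle inequality in the normable space $L_{p,1}$; interpolation enters only locally, to bound the $L_{p,1}$-norm of a single level of a tree by a product of two $L_q$-norms. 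Second, the claimed uniform single-atom gain ``at every atom'' is false. The extremality of the delta tensors $v_\delta\otimes a$, and hence the gap produced by compactness, only appears under the normalization $\frac1m\sum_j\|a+b_j\|-\|a\|\le\eps\|a\|$ (the paper's $\eps$-flat atoms, Lemma~\ref{InequalityForNon-Intensive2}); it is this flatness that forces the limiting configuration to have the form $v\otimes a$ with $v\parallel a$ and $v_j\ge-1$, which is where the second structural condition bites. On an atom where the martingale's $L_1$-norm grows substantially no such gain holds, and for the bare ratio $\|b\|_{\ell_p}/\|b\|_{\ell_1}$ on $V^\ell$ the delta vectors are not even extremal, so the gap you describe does not follow from the hypothesis.

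Third, your height-$\lambda$ Calder\'on--Zygmund splitting runs into exactly the obstruction you flag --- neither piece inherits the $\W$-constraint --- and you leave it unresolved, whereas this is the crux. The paper's decomposition avoids the issue entirely: an atom $\omega\in\AF_n$ is called convex if $\E(\|F_{n+1}\|-\|F_n\|)\chi_\omega\ge\eps\,\E\|F_n\|\chi_\omega$ and flat otherwise. The convex part is summable in $B_1^{0,1}$ by the telescoping identity \eqref{StepwiseSplitting} and needs no structural condition at all; the flat atoms organize into trees on which the \emph{original} martingale differences (which genuinely lie in $\J_\omega[W]$) are estimated level by level, giving geometric growth $e^{\alpha(n-n_0)}$ with $\alpha$ close to $\kappa(p^{-1})<\frac{p-1}{p}\log m$, hence a convergent weighted sum controlled by the root value; the roots are absorbed because their parents are convex. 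The missing idea in your write-up is precisely this convex/flat dichotomy, which is what makes the single-scale gain usable without ever applying the structural condition to a modified martingale.
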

This theorem is an analog of Conjecture~\ref{MainConjecture} for functions on~$\mathbb{R}^d$. In fact, we will prove even stronger inequality
\begin{equation}\label{MainInequality}
\sum\limits_{n=1}^{\infty} m^{-\frac{p-1}{p}n}\|f_n\|_{L_{p,1}} \lesssim \|F\|_{\W},
\end{equation}
which immediately implies the theorem above (by the triangle inequality in~$L_{p,1}$). This inequality is a martingale analog of Conjecture~\ref{StrongMainConjecture}.

We will need some Besov spaces:
\begin{equation}\label{BesovSpace}
\|F\|_{B_{p}^{\beta,1}} = \sum\limits_{n=0}^{\infty}m^{\beta n}\|f_n\|_{L_p}.
\end{equation}
The theorem below also follows from~\eqref{MainInequality} since~$L_{p,1} \hookrightarrow L_{p,p} = L_p$.
\begin{Th}
	If ~$W$ satisfies the second structural condition, then~$I_{\frac{p-1}{p}}$ maps~$\W$ to~$B_{p}^{0,1}$ 
	for $p > 1$. In other words,~$\W \hookrightarrow B_{p}^{-\frac{p-1}{p},1}$.
\end{Th}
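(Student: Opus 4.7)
The plan is to derive this theorem as a direct corollary of the stronger estimate~\eqref{MainInequality} from Theorem~\ref{vanSchaftingen}, so there is essentially no new analytic content to produce—only an unpacking of the two norms involved.

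First, I would observe that the Riesz potential~$\I_\alpha$ acts diagonally on the martingale difference decomposition: if~$F$ has martingale differences~$\{f_n\}$, then the~$n$-th martingale difference of~$\I_\alpha[F]$ is~$m^{-\alpha n} f_n$. Hence, plugging~$\alpha = \frac{p-1}{p}$ and~$\beta = 0$ into definition~\eqref{BesovSpace},
\begin{equation*}
\|\I_{\frac{p-1}{p}}[F]\|_{B_p^{0,1}} = \sum_{n\ge 0} \|m^{-\frac{p-1}{p}n} f_n\|_{L_p} = \sum_{n\ge 0} m^{-\frac{p-1}{p}n} \|f_n\|_{L_p}.
\end{equation*}

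Second, I would apply the elementary Lorentz embedding~$L_{p,1} \hookrightarrow L_{p,p} = L_p$, valid for~$p>1$, which gives~$\|f_n\|_{L_p} \lesssim \|f_n\|_{L_{p,1}}$ for each~$n$ with a constant depending only on~$p$. Combining this with the main inequality~\eqref{MainInequality}, which is available because~$W$ satisfies the second structural condition, yields
\begin{equation*}
\|\I_{\frac{p-1}{p}}[F]\|_{B_p^{0,1}} \lesssim \sum_{n\ge 1} m^{-\frac{p-1}{p}n} \|f_n\|_{L_{p,1}} \lesssim \|F\|_{\W},
\end{equation*}
which is the claimed boundedness. The equivalent reformulation~$\W \hookrightarrow B_p^{-\frac{p-1}{p},1}$ is then immediate from definition~\eqref{BesovSpace}, since~$\|F\|_{B_p^{-\frac{p-1}{p},1}}$ is exactly~$\sum_n m^{-\frac{p-1}{p}n}\|f_n\|_{L_p}$.

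There is no real obstacle at this stage: all the work has been absorbed into the proof of Theorem~\ref{vanSchaftingen} (more precisely,~\eqref{MainInequality}). The only point worth checking carefully is that the constant hidden in~$L_{p,1}\hookrightarrow L_p$ is admissible uniformly in~$n$, but this is standard and well known; for~$p>1$ it follows from rearrangement and the defining quasinorms. Thus the theorem is a trivial consequence of the Lorentz-refined statement, recorded separately only because~$B_p^{0,1}$ is a more familiar scale than the Besov--Lorentz space that would naturally appear from the sharper inequality.
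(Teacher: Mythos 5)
Your proposal is correct and follows exactly the paper's route: the authors likewise derive this theorem from the stronger estimate~\eqref{MainInequality} together with the embedding~$L_{p,1}\hookrightarrow L_{p,p}=L_p$, and your unpacking of the~$B_p^{0,1}$ norm of~$\I_{\frac{p-1}{p}}[F]$ via the diagonal action on martingale differences is the intended (and only) content of the deduction.
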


\subsection{An Uncertainty Principle}

Let us observe that each measure~$\mu$ generates a martingale via conditional expectation, that is by the formula
\begin{equation}\label{MeasureGeneratesAMartingale}
F_n = \sum\limits_{\omega\in \AF_n} \mu(\omega)\chi_{\omega}.
\end{equation}
Note that the characteristic functions of cylinders
\begin{equation*}
\{\gamma\mid \gamma(j) \hbox{ is fixed for all } j<n\}
\end{equation*} 
are continuous and they form a total family in the space~$C(\mathbb{T})$ (i.e. each measure on~$\mathbb{T}$ is uniquely defined by its values at cylinders). Using this fact one can finish the proof of one-to-one correspondence via~\eqref{MeasureGeneratesAMartingale} between finite measures on~$\mathbb{T}$ and~$L_1$-martingales adapted to~$\F$. 

We may consider all vector-valued measures~$\mu$ whose martingales lie in~$\W$. We claim that these measures cannot be too singular. We introduce some more notation to formulate our results. 

Let~$v\in V$ be a vector. Consider the function~$\kappa_v\colon[0,\infty)\to \mathbb{R}$ given by the formula
\begin{equation*}
\kappa_v(\theta) = \theta\log\Big(\frac{1}{m}\sum\limits_{j=1}^m|1+v_j|^{\frac{1}{\theta}}\Big) = \log\|{\bf 1}+ v\|_{L_{\frac{1}{\theta}}}.
\end{equation*}
By H\"older's inequality, this function is convex and non-increasing. If, in addition,~$v_j \geq -1$ for any~$j$, this function also satisfies~$\kappa_v(1) = 0$. Consider yet another function~$\kappa$:
\begin{equation}\label{KappaDefinition}
\kappa(\theta) = \sup\Big\{\kappa_v(\theta)\;\Big|\, \exists a\in\mathbb{R}^\ell\setminus\{0\} \hbox{ such that } v\otimes a\in W \hbox{ and } \forall j \quad v_j \geq -1\Big\}.
\end{equation}
The function~$\kappa$ is also convex, non-increasing and satisfies~$\kappa(1) = 0$.
\begin{Rem}\label{StrictInequality}
	The second structural condition in the previous subsection is equivalent to 
	\begin{equation*}
	\kappa\big(p^{-1}\big) < \frac{p-1}{p}\log m
	\end{equation*}
	for any~$p \in (1,\infty]$.
\end{Rem}

The theorem below provides a martingale analog of the results in~\cite{RW} and~\cite{SW}. We recall the definition of the lower Hausdorff dimension of a measure:
\begin{equation*}
\dH \mu = \{\inf \alpha \mid \exists F \ \hbox{such that}\ \mu(F) \ne 0, \dH F \leq \alpha\}.
\end{equation*}
\begin{Th}\label{Uncertainty}
	For any~$\mu \in \W$,
	\begin{equation}\label{If}
	\dH(\mu) \geq 1+\frac{\kappa'(1)}{\log m}.
	\end{equation}
	This inequality is sharp in the sense that for any choice of~$m$ and~$W$, there exists~$\mu$ such that this inequality turns into equality.
\end{Th}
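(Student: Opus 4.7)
The plan is to use the Frostman-type characterization of lower Hausdorff dimension: $\dH(\mu)\ge s$ if and only if the local dimension $d_{|\mu|}(\gamma):=\liminf_n\log|\mu|(\omega_n(\gamma))/\log m^{-n}$ is at least $s$ at $|\mu|$-almost every path $\gamma\in\mathbb{T}$, where $\omega_n(\gamma)\in\AF_n$ is the unique level-$n$ atom containing $\gamma$. The goal therefore reduces to showing that the rescaled variation $m^n|\mu|(\omega_n(\gamma))$ grows at most like $m^{-\kappa'(1)n/\log m}$ along $|\mu|$-typical paths.

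I would treat the model case of a scalar positive $\mu\in\W$ first. Here $F_n\ge 0$, $|\mu|=\mu$, and at each atom $\omega$ the difference is encoded by a vector $v^\omega\in W$ with $v^\omega_j\ge -1$, so $F_n(\gamma)=\prod_{k=1}^n(1+v^{\omega_{k-1}(\gamma)}_{j_k(\gamma)})$, with $j_k(\gamma)\in\{1,\dots,m\}$ the child chosen at step $k$. Setting $X_k:=\log(1+v^{\omega_{k-1}(\gamma)}_{j_k(\gamma)})$ turns $\log F_n$ into the sum $\sum_k X_k$; under $\mu$ the conditional law of $j_k$ given $\F_{k-1}$ assigns probability $(1+v^\omega_j)/m$ to child $j$, so $\E_\mu[X_k\mid\F_{k-1}]=\phi(v^\omega)$ with $\phi(v):=\sum_j\tfrac{1+v_j}{m}\log(1+v_j)$. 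The key analytic input is the uniform estimate $\phi(v)\le -\kappa'(1^-)$ over admissible $v$: direct differentiation gives $\kappa_v'(1)=-\phi(v)$, and since $\kappa=\sup_v\kappa_v$ with $\kappa(1)=\kappa_v(1)=0$, convexity forces $\kappa_v'(1^-)\ge\kappa'(1^-)$. A martingale SLLN applied to $\sum_k(X_k-\E_\mu[X_k\mid\F_{k-1}])$ (whose conditional variances are uniformly bounded because $(\log(1+v_j))^2$ near $v_j=-1$ is tamed by the weight $1+v_j$) then yields $\tfrac1n\log F_n(\gamma)\le -\kappa'(1)+o(1)$ $\mu$-a.e., which translates directly into $d_\mu(\gamma)\ge 1+\kappa'(1)/\log m$.

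The main obstacle is the extension to vector-valued (and signed) $\mu$, because $|\mu|$ does not inherit a clean martingale structure from $F_n$ and the telescoping product expansion breaks. The plan is to work with the positive submartingale $|F_n|$ in an adaptive frame: at each non-zero atom $\omega$ set $a:=F_n(\omega)/|F_n(\omega)|$ and $u_j:=(a\cdot w^\omega_j)/|F_n(\omega)|\in V$, and use the identity $|F_{n+1}(\omega_j)|^2=|F_n|^2(1+u_j)^2+|w^\omega_{j,\perp}|^2$. The delicate point is that the rank-one element $u\otimes a$ generally does not lie in $W$, so $\kappa$ does not apply to it literally; nevertheless one must compare the $|\mu|$-tilted walk with its scalar counterpart and verify that the orthogonal components $w^\omega_{j,\perp}$—while possibly growing $|F_n|$ faster—force $\mu$ to spread at finer scales, so that $|\mu|(\omega_n(\gamma))$ decays no slower than in the rank-one-only scenario. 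This step encapsulates precisely why the rank-one-only definition of $\kappa$ is the correct notion here, and is the heart of the argument.

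Sharpness is easier: compactness of the admissible set of rank-one pairs provides an extremizer $v^*\otimes a^*\in W$ with $\kappa_{v^*}'(1)=\kappa'(1)$; taking every martingale difference to be a rescaled copy of $v^*\otimes a^*$ keeps $F_n$ parallel to $a^*$ throughout, so $\mu=\tilde\mu\cdot a^*$ with $\tilde\mu$ a positive scalar measure to which the argument of the second paragraph applies directly, yielding $d_\mu(\gamma)=1-\phi(v^*)/\log m=1+\kappa'(1)/\log m$ $\mu$-a.e. and realizing equality.
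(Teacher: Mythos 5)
The scalar positive case and the sharpness construction in your proposal are sound (and the sharpness part coincides with the paper's: an i.i.d.\ multiplicative cascade built from an extremal $v^*\otimes a^*$, whose typical local dimension is computed by a strong law of large numbers — the paper phrases this via Eggleston's formula). But the vector-valued lower bound, which is the actual content of the theorem, is left as a ``delicate point'' that you describe rather than prove, and the heuristic you offer for it points in the wrong direction. If the orthogonal components $w^{\omega}_{j,\perp}$ are large, then by your own identity $\|F_{n+1}(\omega_j)\|$ \emph{increases}, so $|\mu|(\omega_j)$ is \emph{larger} and the local dimension along such a path is \emph{smaller}; these components do not ``force $\mu$ to spread''. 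The correct mechanism is a global budget: the telescoping identity $\|F\|_{L_1}=\sum_n\E\bigl(\|F_{n+1}\|-\|F_n\|\bigr)$ shows that atoms on which $\E\|F_{n+1}\|\chi_\omega$ exceeds $(1+\eps)\E\|F_n\|\chi_\omega$ (``convex'' atoms) can contribute at most $\eps^{-1}\|F\|_{L_1}$ in total, so the part of $\mu$ they carry is an $L_1$ function (dimension $1$). On the complementary ``flat'' atoms one needs a quantitative version of your rank-one reduction: the paper proves by a compactness/contradiction argument (Lemma~\ref{InequalityForNon-Intensive2}) that near-equality in $\frac1m\sum_j\|a+b_j\|\ge\|a\|$ forces $\{b_j\}$ to be close to some $v\otimes a$ with $v_j\ge-1$, whence an $L_p$ growth bound $e^{\kappa(p^{-1})}$ per generation. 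Your proposal contains neither the dichotomy nor this compactness step, and without them the claim that $|\mu|(\omega_n(\gamma))$ ``decays no slower than in the rank-one-only scenario'' is unsupported.

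A second, more minor divergence: you reduce $\dH$ to $|\mu|$-a.e.\ local dimensions and then need pathwise (conditional-expectation plus SLLN) control of $\log\|F_n\|$ under the tilted measure; the paper instead verifies a Frostman-type inequality over arbitrary collections of disjoint atoms by combining the flat-tree $L_p$ estimates (Lemma~\ref{Inductive}) with H\"older's inequality and then letting $p\downarrow1$. Your route could in principle work once the flat/convex dichotomy is in place (flat steps give conditional drift $\le-\kappa'(1)+\delta$ for $\log\|F_n\|$, convex steps are controlled by a Borel--Cantelli argument using the $L_1$ budget), but as written the argument for general $\mu\in\W$ is incomplete at its central step.
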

As an elementary computation shows,
\begin{equation}\label{FormulaForDerivativeOfKappa}
\kappa'(1) = \inf\Big\{-\frac{1}{m}\sum\limits_{j=1}^m(1+v_j)\log(1+v_j)\;\Big|\, \exists a\in\mathbb{R}^\ell\setminus \{0\} \hbox{ such that } v\otimes a\in W \hbox{ and }
 \forall j \quad v_j \geq -1\Big\}.
\end{equation}

\subsection{More on $m$-regular model}\label{S5}
We enclose a small dictionary translating martingale theory to Euclidean space notions and theorems.  We also try to justify the first two analogies given below.

\begin{center}
	\begin{tabular}{|r|c|l|} \hline
		$m$-regular martingale & $\R^{d}$ & Remark
		\\ \hline
		first structural assumption & full antisymmetry of the multiplier & \\
		second structural assumption & cancelling condition & \\
		Theorem \ref{vanSchaftingen} & Theorem~\ref{vanthm}, Conjecture \ref{MainConjecture} &  H-L-S  inequality\\
		Theorem \ref{Uncertainty}  & Theorem \ref{rwthm}, Theorem \ref{swthm} &  dimension estimates\\
		Corollary \ref{antisymmetry} & Conjecture \ref{ascon} & partial antisymmetry \\ 
		Theorem \ref{TraceTheorem} & Theorem \ref{MazjaThm} & trace theorem \\ 
		\hline
	\end{tabular}
\end{center}

Let us equip the set~$[1..m]$ with a commutative group structure~$G$. Elements of~$V$ might be thought of as functions on~$G$ with zero mean value. It is more convenient to switch to complex scalars (see Remark~\ref{ComplexScalars}). Let us restrict ourselves to the case where the space~$W$ is shift invariant: if~$f\in V^{\ell}$ belongs to~$W$, then~$f(z\cdot)$ also belongs to~$W$ for any element~$z \in G$; we treat the elements of~$W$ as~$\mathbb{C}^\ell$-valued functions on~$G$. Such spaces can be described in terms of the Fourier transform:
\begin{equation}\label{ShiftInvariant}
W = \Big\{f\colon G\to\mathbb{C}^\ell\,\Big|\, \forall \gamma \in G\setminus\{0\}\quad \hat{f}(\gamma) \in W_{\gamma}\Big\},
\end{equation}
where~$W_{\gamma}$ are fixed subspaces of~$\mathbb{R}^\ell$.
\begin{Le}
	Let~$W$ be given by formula~\eqref{ShiftInvariant}. The first structural assumption is true if and only if~$W_{\gamma} \cap W_{-\gamma} = \{0\}$ for $\gamma \in G\setminus \{0\}$.
\end{Le}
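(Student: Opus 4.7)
The plan is to work entirely Fourier-side on the finite abelian group $G$. For a real $v\in V$ and any $a\in\mathbb{C}^\ell$, the Fourier transform of the tensor $v\otimes a\colon G\to\mathbb{C}^\ell$ is simply $\widehat{v\otimes a}(\gamma)=\hat v(\gamma)\,a$, so by \eqref{ShiftInvariant} the membership $v\otimes a\in W$ is equivalent to $\hat v(\gamma)\,a\in W_\gamma$ for every $\gamma\in G\setminus\{0\}$. Two properties of $v$ will be used throughout: $\hat v(0)=0$ (because $v$ has zero mean) and $\hat v(-\gamma)=\overline{\hat v(\gamma)}$ (because $v$ is real-valued).

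For the implication ``$W_\gamma\cap W_{-\gamma}=\{0\}$ implies the first structural assumption'', I would argue by contradiction. Assume $v\otimes a\in W$ with $a\neq 0$ and pick some $\gamma_0\neq 0$ with $\hat v(\gamma_0)\neq 0$; if no such $\gamma_0$ exists then $v=0$ and we are done. Since $W_{\gamma_0}$ is a subspace, dividing out the nonzero scalar $\hat v(\gamma_0)$ yields $a\in W_{\gamma_0}$. The reality relation forces $\hat v(-\gamma_0)=\overline{\hat v(\gamma_0)}\neq 0$ and the same argument gives $a\in W_{-\gamma_0}$. The hypothesis then forces $a=0$, a contradiction.

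For the converse, I would argue the contrapositive. Given $\gamma_0\neq 0$ and a nonzero $a\in W_{\gamma_0}\cap W_{-\gamma_0}$, I define $v$ by prescribing its Fourier transform: $\hat v(\gamma_0)=\hat v(-\gamma_0)=1$ (or just $\hat v(\gamma_0)=1$ in the exceptional case $2\gamma_0=0$, where the character $\gamma_0$ itself takes values in $\{\pm 1\}$), and $\hat v=0$ elsewhere. The symmetric placement of the Fourier support makes $v$ automatically real, the condition $\hat v(0)=0$ puts $v$ in $V$, and $v\neq 0$. For every $\gamma\neq 0$ the vector $\hat v(\gamma)\,a$ either vanishes or equals $a$ (up to a unimodular scalar) and lies in $W_{\pm\gamma_0}$, so $v\otimes a$ is a nonzero forbidden rank-one element of $W$, contradicting the first structural assumption.

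The proof is a one-page Fourier exercise; there is no substantive obstacle. The only point that warrants care is making sure that the vector $v$ constructed in the converse is genuinely real-valued and has mean zero, and this is precisely what the symmetric choice of Fourier support (together with omitting $\gamma=0$) guarantees.
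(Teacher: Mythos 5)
Your proof is correct and follows the same route the paper intends: the paper omits the proof of this lemma, referring to the proof of the companion lemma on the second structural condition, which is exactly the same Fourier-side computation with rank-one tensors ($\widehat{v\otimes a}(\gamma)=\hat v(\gamma)a$, with the reality constraint $\hat v(-\gamma)=\overline{\hat v(\gamma)}$ supplying the $\pm\gamma$ pairing). Your handling of the edge cases ($v=0$, and $2\gamma_0=0$, where the condition degenerates to $W_{\gamma_0}=\{0\}$) is sound.
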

The proof of this lemma is similar to the proof of the following lemma.
%\begin{proof}
	%Suppose that $\hat{\alpha}(\chi) \neq \hat{\alpha}(-\chi)$ for $\chi \neq 0$ and $T_{\alpha}$ has a real eigenvector $f$. But
	%\[
	%\hat{\alpha} \hat{f} = \lambda  \hat{f}
	%\]
	%cannot hold since $\hat{f}$ is a symmetric function. 
	
	%If there exists $\chi \neq 0$ such that $\hat{\alpha}(\chi) \neq \hat{\alpha}(-\chi)$, then it suffices to take $f = (\delta_{\xi} + \delta_{-\xi} - c\delta_{0})\check{}$ for a suitable $c$.
%\end{proof}

\begin{Le}
	Let~$W$ be given by formula~\eqref{ShiftInvariant}. The second structural assumption on~$W$ is equivalent to the cancellation condition
	\begin{equation}\label{FiniteVSCondition}
	\bigcap_{\gamma \in G\setminus \{0\}}W_{\gamma} = \{0\}.
	\end{equation}
\end{Le}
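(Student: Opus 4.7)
The plan is to use shift-invariance together with elementary Fourier analysis on $G$ to reduce the second structural assumption to a statement about the fibre subspaces $W_{\gamma}$. First, I would observe that a vector $v \in V$ has $m-1$ equal coordinates if and only if, up to a non-zero scalar and a $G$-translation, $v = m\delta_{z_{0}} - \mathbf{1}$ for some $z_{0} \in G$; indeed, the zero-mean condition pins down the ``odd'' coordinate in terms of the common value. Since any commutative group structure on $[1..m]$ acts transitively on itself by translation and $W$ is shift-invariant, the second structural assumption is equivalent to the statement that $v_{\delta} \otimes a \notin W$ for every non-zero $a$, where $v_{\delta}(z) := m\delta_{e}(z) - 1$ and $e$ denotes the identity of $G$.

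Next I would compute the Fourier coefficients of $v_{\delta}$. Using orthogonality of characters, $\sum_{z \in G} \overline{\chi_{\gamma}(z)} = 0$ for every non-trivial $\gamma$, one obtains
$$\hat{v}_{\delta}(\gamma) = \sum_{z \in G}\bigl(m\delta_{e}(z) - 1\bigr)\overline{\chi_{\gamma}(z)} = m, \qquad \gamma \in G \setminus \{0\}.$$
In particular $\hat{v}_{\delta}(\gamma) \neq 0$ for every such $\gamma$. Consequently the Fourier transform of $v_{\delta} \otimes a$ at a non-zero character is the non-zero scalar multiple $m\,a$, so the characterisation \eqref{ShiftInvariant} shows that $v_{\delta} \otimes a \in W$ if and only if $a \in W_{\gamma}$ for every $\gamma \neq 0$, i.e. $a \in \bigcap_{\gamma \neq 0} W_{\gamma}$.

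The two directions now read off immediately. If $\bigcap_{\gamma \neq 0} W_{\gamma} = \{0\}$, then every $v_{\delta} \otimes a$ belonging to $W$ forces $a = 0$, so no non-zero delta-like rank-one tensor lies in $W$ and the second structural assumption holds; conversely, a non-zero vector $a$ in the intersection produces a non-zero tensor $v_{\delta} \otimes a \in W$, violating that assumption. The only point that requires a small check is the reduction from an arbitrary $v$ with $m-1$ equal coordinates to the canonical $v_{\delta}$ via shift invariance — this is routine because the Fourier transform intertwines $G$-translations with multiplication by characters of modulus one, and scalar multiplication by a non-zero constant does not alter membership in the linear subspaces $W_{\gamma}$. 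No genuine obstacle is expected; the whole argument hinges on the elementary fact that a delta-like $v$ has nowhere-vanishing Fourier transform on $G\setminus\{0\}$.
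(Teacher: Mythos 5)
Your proof is correct and follows essentially the same route as the paper's: reduce, via shift invariance and scaling, any rank-one tensor $v\otimes a$ with $v$ having $m-1$ equal coordinates to the canonical $(m\delta_0-\mathbf{1}_G)\otimes a$, and then use that the Fourier transform of $m\delta_0-\mathbf{1}_G$ equals the non-zero constant $m$ on $G\setminus\{0\}$, so that membership in $W$ is equivalent to $a\in\bigcap_{\gamma\neq 0}W_\gamma$. You merely spell out the Fourier computation that the paper leaves implicit.
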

\begin{proof}
	Assume the second structural assumption does not hold. This means there exists a non-zero vector~$v\otimes a \in W$,~$v\in V$ and~$a\in\mathbb{C}^\ell$ such that~$v$ has~$m-1$ equal coordinates. By translation invariance of~$W$, the vector~$(m\delta_0 - {\bf 1}_{G})\otimes a$ also lies in~$W$. Therefore,~$a\in W_{\gamma}$ for all~$\gamma\in G\setminus\{0\}$ and~\eqref{FiniteVSCondition} is violated.
	
	Assume that~\eqref{FiniteVSCondition} does not hold. Let~$a$ be a non-zero vector that belongs to all~$W_{\gamma}$. Clearly,~$(m\delta_0 - {\bf 1}_{G})\otimes a \in W$ in this case, and the second structural assumption is violated.  
\end{proof}

\section{Linear algebraic part}

%In this chapter we prove a subharmonicity lemma simillar to Lemma 10 in \cite{Janson}.
 \begin{Le}\label{InequalityForNon-Intensive2}
	Let~$p \in (1,\infty]$ be fixed. For any~$\delta > 0$, there exists a positive constant~$\eps_0$ such that for any~$\eps < \eps_0$ and any vectors~$a, b_1,b_2,\ldots, b_m$ in~$\mathbb{R}^\ell$ satisfying~$\{b_j\}_j \in W$ and
	\begin{equation}\label{NonIntensivityLinearAlgebra}
	\frac{1}{m}\sum\limits_{j=1}^m\|a+b_j\| - \|a\| \leq \eps \|a\|,
	\end{equation}
	there is the estimate
	\begin{equation*}
	\Big(\frac{1}{m}\sum\limits_{j=1}^m\|a+b_j\|^p\Big)^{\frac{1}{p}} \leq e^{\kappa(p^{-1})+\delta}\|a\|.
	\end{equation*}
\end{Le}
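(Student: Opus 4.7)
The plan is a compactness argument by contradiction that reduces, in the limit $\varepsilon\to 0$, to the equality case of the triangle inequality and then to the definition of $\kappa$. By homogeneity we may normalize $\|a\|=1$ (the case $a=0$ forces all $b_j=0$ via~\eqref{NonIntensivityLinearAlgebra} and is trivial). Since $\{b_j\}\in W\subset V^\ell$, we have $\sum_j b_j = 0$, so convexity gives
\begin{equation*}
1 = \Big\|\tfrac{1}{m}\sum_{j=1}^m (a+b_j)\Big\| \leq \tfrac{1}{m}\sum_{j=1}^m \|a+b_j\| \leq 1+\varepsilon .
\end{equation*}
In particular each $\|b_j\|\leq m(1+\varepsilon)+1$ is uniformly bounded in $\varepsilon<1$, which is what makes the compactness argument work.

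Suppose the lemma fails. Then there exist $\delta_0>0$, $\varepsilon_n\to 0^+$, and tuples $(a^{(n)},b_1^{(n)},\ldots,b_m^{(n)})$ with $\|a^{(n)}\|=1$, $\{b_j^{(n)}\}\in W$, and $\tfrac{1}{m}\sum_j\|a^{(n)}+b_j^{(n)}\|\leq 1+\varepsilon_n$, for which
\begin{equation*}
\Big(\tfrac{1}{m}\sum_j \|a^{(n)}+b_j^{(n)}\|^p\Big)^{1/p} > e^{\kappa(p^{-1})+\delta_0}.
\end{equation*}
By the uniform bound on the $b_j^{(n)}$, pass to a subsequence along which $a^{(n)}\to a$ and $b_j^{(n)}\to b_j$. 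Since $W$ is a closed subspace, the limit satisfies $\|a\|=1$, $\{b_j\}\in W$, and (passing to the limit in the two-sided bound above) $\tfrac{1}{m}\sum_j\|a+b_j\| = 1 = \|a\|$.

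This is the equality case of the triangle inequality: the vectors $a+b_j$ have average $a\neq 0$ and the average of their norms equals $\|a\|$, so each $a+b_j$ must be a nonnegative scalar multiple of $a$. Hence $b_j = v_j a$ for some $v_j\geq -1$, and the constraint $\sum_j b_j=0$ forces $v=(v_j)_{j=1}^m\in V$; by assumption $v\otimes a\in W$. By the very definition~\eqref{KappaDefinition} of $\kappa$,
\begin{equation*}
\Big(\tfrac{1}{m}\sum_j \|a+b_j\|^p\Big)^{1/p} = \Big(\tfrac{1}{m}\sum_j |1+v_j|^p\Big)^{1/p} = e^{\kappa_v(1/p)} \leq e^{\kappa(1/p)} .
\end{equation*}
Continuity of the norm and of the $L_p$-average in the finite collection of limiting vectors forces the left side to equal the limit of the corresponding quantity along the subsequence, contradicting $e^{\kappa(1/p)+\delta_0}$ for large $n$. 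The case $p=\infty$ is identical with $\max_j$ in place of the $L_p$-average, using $\kappa_v(0)=\log\|\mathbf{1}+v\|_\infty$.

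The main point of the argument is the rigidity it exploits: under the martingale condition $\sum_j b_j=0$, asking the average of $\|a+b_j\|$ to equal $\|a\|$ collapses the configuration onto the line through $a$, which is exactly the one-dimensional case controlled by the supremum defining $\kappa$. The quantitative loss $\delta$ in the exponent (and the non-explicit $\varepsilon_0$) is the unavoidable price of turning this qualitative rigidity into a uniform statement via compactness; any attempt at an explicit quantitative argument would need a stability version of the equality case of the triangle inequality in the presence of the constraint $v\otimes a\in W$, and this appears to be the only real obstacle to an alternative proof.
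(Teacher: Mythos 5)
Your proof is correct and follows essentially the same compactness-by-contradiction strategy as the paper's: normalize $\|a\|=1$, extract a convergent subsequence, show the limit configuration collapses onto the line $\mathbb{R}a$ with $v_j\geq -1$, and contradict the definition of $\kappa$ in~\eqref{KappaDefinition}. The only real difference is in the intermediate step — the paper quantifies the deficit via the orthogonal projections onto $\mathbb{R}a_n$ and its complement \emph{before} passing to the limit, whereas you obtain boundedness directly from $\sum_j\|a+b_j\|\leq m(1+\eps)$ and then invoke the equality case of the triangle inequality in the limit, which is a valid and somewhat cleaner shortcut.
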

\begin{Rem}\label{JensenRemark}
	Note that the quantity on the left of~\eqref{NonIntensivityLinearAlgebra} is always non-negative since
	$\sum_j b_j = 0$, because~$\{b_j\}_j \in W \subset V^{\ell}$.
\end{Rem}

\begin{proof}[Proof of Lemma~\ref{InequalityForNon-Intensive2}.]
	Let~$\pi_a$ denote the orthogonal projection onto the line~$\mathbb{R}a \subset \mathbb{R}^\ell$, let~$\pi_{a^\perp}$ denote the projection onto its orthogonal complement. We start with a simple observation:
	\begin{equation*}
	\frac{1}{m}\sum\limits_{j=1}^m\|a+b_j\| \stackrel{\scriptscriptstyle \pi_a[a] = a}{\geq} \frac{1}{m}\sum\limits_{j=1}^m\|a + \pi_a[b_j]\| \geq \|a\|.
	\end{equation*}
	The difference between the first and the second quantities is 
	\begin{equation}\label{FirstDifference}
	\frac{1}{m}\sum\limits_{j=1}^m\big(\|a+b_j\| - \|a + \pi_a[b_j]\|\big) = \frac{1}{m}\sum\limits_{j=1}^m\frac{\|\pi_{a^\perp}[b_j]\|^2}{\|a+b_j\| + \|a + \pi_a[b_j]\|} \asymp \frac{1}{m}\sum\limits_{j=1}^m\frac{\|\pi_{a^\perp}[b_j]\|^2}{\|a+b_j\|}. 
	\end{equation}
	The difference between the second and the third quantities is
	\begin{equation}\label{SecondDifference}
	\begin{split}
	\frac{1}{m}\sum\limits_{j=1}^m(\|a + \pi_a[b_j]\| - \|a\|) =\frac{1}{m}\sum\limits_{j=1}^m\big(|a + \pi_a[b_j]| - (a+\pi_a[b_j])\big) = \\ \frac{2}{m}\sum\limits_{j=1}^m(-a-\pi_a[b_j])\chi_{\{a+\pi_a[b_j] < 0\}}.
	\end{split}
	\end{equation}
	Here we interpret~$a$ and~$\pi_a[b_j]$ as reals in such a way that~$a$ is the positive number~$\|a\|$.

	Assume the contrary. Let there exist a sequence~$\{\eps_n\}_{n}$ tending to zero such that for any~$n$ there exist vectors~$a_n$ and~$\{b_{nj}\}_{j=1}^m$,~$\{b_{nj}\}_j \in W$ such that
	\begin{equation*}
	\Big(\frac{1}{m}\sum\limits_{j=1}^m \|a_n+b_{nj}\|\Big) - \|a_n\| \leq \eps_n\|a_n\|,
	\end{equation*}
	but
	\begin{equation}\label{Contradictory}
	\liminf_n \frac{\Big(\frac{1}{m}\sum_{1}^m\|a_n+b_{nj}\|^p\Big)^{\frac{1}{p}}}{\|a_n\|} \geq e^{\kappa(p^{-1})+\delta}.
	\end{equation}
	
	Without loss of generality, we may assume~$\|a_n\| = 1$. We know that the quantity~\eqref{SecondDifference} tends to zero:
	\begin{equation*}
	\sum\limits_{j=1}^m(-1-\pi_{a_n}[b_{nj}])\chi_{\{1+\pi_{a_n}[b_{nj}] < 0\}} = o(1), \quad n\to \infty.
	\end{equation*}
	In particular, this means~$\pi_{a_n}[b_{nj}] \geq -1 + o(1)$ for any~$j$. On the hand,~$\sum_j \pi_{a_n}[b_{nj}] = 0$, so
	\begin{equation}\label{Projection1}
	-1 + o(1) \leq \pi_{a_n}[b_{nj}] \leq m-1 + o(1),\quad n\to\infty,
	\end{equation}
	for any~$j$.
	
	We also know that the quantity~\eqref{FirstDifference} tends to zero as~$n\to \infty$:
	\begin{equation*}
	\sum\limits_{j=1}^m\frac{\|\pi_{{a_n}^\perp}[b_{nj}]\|^2}{\|a_n+b_{nj}\|} = o(1),\quad n\to \infty.
	\end{equation*}
	We use the triangle inequality in the denominator and also~\eqref{Projection1}:
	\begin{equation*}
	\sum\limits_{j=1}^m\frac{\|\pi_{{a_n}^\perp}[b_{nj}]\|^2}{m+\|\pi_{a_n^\perp}[b_{nj}]\|} = o(1).
	\end{equation*}
	In particular,~$\pi_{{a_n}^\perp}[b_{nj}] = o(1)$ for any~$j$. Thus, by~\eqref{Projection1},~$\|b_{nj}\| \leq m-1 + o(1)$. Passing to a subsequence if needed, we may assume that~$b_{nj} \to b_j$ for each~$j$ and also~$a_n \to a$. Clearly, then~$b_j \parallel a$ for all~$j$, the vector~$\{b_j\}_j$ belongs to~$W$, and for all~$j$
	\begin{equation*}
	-1 \leq \pi_{a}[b_{j}].
	\end{equation*}
	This means~$\{b_j\} = v\otimes a$ with~$v_j \geq -1$ and (by~\eqref{Contradictory})
	\begin{equation*}
	\Big(\frac{1}{m}\sum\limits_{j=1}^m\|1+v_j\|^p\Big)^{\frac{1}{p}} \geq e^{\kappa(p^{-1})+\delta},
	\end{equation*}
	which contradicts~\eqref{KappaDefinition}.
\end{proof}

\section{Combinatorial part}
\begin{Def}\label{DefinitionOfConvexity}
	We say that an atom~$\omega \in \AF_{n}$ is~$\eps$-convex for the martingale~$F \in \W$ if
	\begin{equation*}
	\E(\|F_{n+1}\| - \|F_n\|)\chi_{\omega} \geq \eps \E\|F_n\|\chi_{\omega}.
	\end{equation*}
	If this inequality does not hold, then~$\omega$ is called~$\eps$-flat for~$F$. 
\end{Def}
Since~$F$ and~$\eps$ are fixed, we call atoms simply convex or flat. The set of all convex atoms is denoted by~$\Co$ and the set of all flat atoms is denoted by~$\Fl$.
\begin{Rem}\label{FlatLInfty}
	By Lemma~\ref{InequalityForNon-Intensive2}, 
	\begin{equation*}
	\|F_{n+1}\|_{L_p(\omega)} \leq e^{\kappa(p^{-1}) + o_{\eps\to 0,p}(1)}\|F_n\|_{L_p(\omega)}
	\end{equation*}
	for any~$\eps$-flat atom~$\omega \in \AF_n$.
\end{Rem}
Definition~\ref{DefinitionOfConvexity} leads to the natural splitting~$F = F_{\Co} + F_{\Fl}$, where
\begin{equation}\label{ConvexAndFlat}
F_{\Co} = \sum\limits_{n=0}^{\infty}\sum\limits_{\omega \in \Co \cap \AF_n}f_{n+1}\chi_\omega; \quad F_{\Fl} = \sum\limits_{n=0}^{\infty}\sum\limits_{\omega \in \Fl \cap \AF_n}f_{n+1}\chi_\omega.
\end{equation}
We also note a useful identity:
\begin{equation}\label{StepwiseSplitting}
\|F\|_{L_1} = \sum\limits_{n=0}^\infty \E\big(\|F_{n+1}\| - \|F_n\|\big) = \sum\limits_{n=0}^{\infty}\sum\limits_{\omega \in \AF_n}\E(\|F_{n+1}\| - \|F_n\|)\chi_{\omega}.
\end{equation}
By Remark~\ref{JensenRemark}, each summand in this double sum is non-negative.

\subsection{Convex atoms}
\begin{Le}\label{TriangleInequality}
	Let~$\omega \in \AF_n$ be a convex atom. Then,
	\begin{equation*}
	\E\|f_{n+1}\|\chi_{\omega} \lesssim_{\eps} \E(\|F_{n+1}\| - \|F_n\|)\chi_{\omega}.
	\end{equation*}
\end{Le}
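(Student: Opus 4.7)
The plan is a short reduction using the triangle inequality together with the definition of $\eps$-convexity. First I would note that on the atom $\omega$ we have $f_{n+1} = F_{n+1} - F_n$, so pointwise
\begin{equation*}
\|f_{n+1}\| \le \|F_{n+1}\| + \|F_n\|.
\end{equation*}
Integrating against $\chi_\omega$ and rewriting the right-hand side as $\E(\|F_{n+1}\|-\|F_n\|)\chi_\omega + 2\E\|F_n\|\chi_\omega$ gives
\begin{equation*}
\E\|f_{n+1}\|\chi_\omega \le \E(\|F_{n+1}\|-\|F_n\|)\chi_\omega + 2\E\|F_n\|\chi_\omega.
\end{equation*}

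Next I would invoke convexity of $\omega$ (Definition~\ref{DefinitionOfConvexity}) in the form $\E\|F_n\|\chi_\omega \le \eps^{-1}\E(\|F_{n+1}\|-\|F_n\|)\chi_\omega$, which directly yields
\begin{equation*}
\E\|f_{n+1}\|\chi_\omega \le \bigl(1 + 2\eps^{-1}\bigr)\E(\|F_{n+1}\|-\|F_n\|)\chi_\omega,
\end{equation*}
so the implied constant is of order $\eps^{-1}$.

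There is really no obstacle here; the lemma is a formal consequence of the definition, and the only point worth double-checking is that each term in the telescoping sum~\eqref{StepwiseSplitting} is indeed nonnegative (as noted after that identity via Remark~\ref{JensenRemark}), which guarantees that the bound on $\E\|F_n\|\chi_\omega$ extracted from convexity is meaningful and that the right-hand side is comparable to what later summation over convex atoms will produce. I would expect this lemma to be used in tandem with Remark~\ref{FlatLInfty} so that the convex part $F_{\Co}$ in the decomposition~\eqref{ConvexAndFlat} is controlled by $\|F\|_{L_1}$ via~\eqref{StepwiseSplitting}, while the flat part is handled separately through the $L_p$-contraction estimate.
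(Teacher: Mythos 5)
Your argument is correct and is essentially identical to the paper's own proof: both combine the pointwise triangle inequality $\|f_{n+1}\|\le\|F_{n+1}\|+\|F_n\|$ with the definition of $\eps$-convexity to bound $\E\|F_n\|\chi_\omega$ by $\eps^{-1}\E(\|F_{n+1}\|-\|F_n\|)\chi_\omega$, arriving at the same constant $(\eps+2)/\eps$. Nothing further is needed.
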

\begin{proof}
	We restate the definition of the convexity of~$\omega$:
	\begin{equation*}
	\E\|F_n\|\chi_\omega \leq \frac{1}{\eps}\E(\|F_{n+1}\| - \|F_n\|)\chi_{\omega}.
	\end{equation*}
	Thus,
	\begin{equation*}
	\E\|F_{n+1}\|\chi_{\omega} \leq \frac{\eps + 1}{\eps}\E(\|F_{n+1}\| - \|F_n\|)\chi_{\omega}
	\end{equation*}
	and finally
	\begin{equation*}
	\E\|f_{n+1}\|\chi_\omega \leq \E\big(\|F_n\|+\|F_{n+1}\|\big)\chi_{\omega} \leq \frac{\eps + 2}{\eps}\E(\|F_{n+1}\| - \|F_n\|)\chi_{\omega}.
	\end{equation*}
\end{proof}
\begin{Cor}\label{ToBesov}
	We have a very nice bound
	\begin{equation*}
	\sum\limits_{n=0}^{\infty}\Big\|\sum\limits_{\omega \in \Co \cap \AF_n}f_{n+1}\chi_\omega\Big\|_{L_1} \lesssim \sum\limits_{n=0}^{\infty}\sum\limits_{\omega \in \Co \cap\AF_n}\E(\|F_{n+1}\| - \|F_n\|)\chi_{\omega} \stackrel{\scriptscriptstyle\eqref{StepwiseSplitting}}{\leq} \|F\|_{L_1}.
	\end{equation*}
	In other words, we have proved
	\begin{equation*}
	\|F_{\Co}\|_{B_1^{0,1}} \lesssim \|F\|_{L_1}, 
	\end{equation*}
	where~$F_{\Co}$ is given by~\eqref{ConvexAndFlat}.
\end{Cor}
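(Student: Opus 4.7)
The plan is to read off both inequalities as a direct assembly of Lemma~\ref{TriangleInequality} with the telescoping identity~\eqref{StepwiseSplitting}; there is essentially no obstacle here beyond bookkeeping, so I will focus on making sure the pieces fit together cleanly.

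First I would fix $n$ and observe that since the atoms in $\Co \cap \AF_n$ are pairwise disjoint, the triangle inequality actually becomes an equality at the level of $L_1$:
\begin{equation*}
\Big\|\sum_{\omega \in \Co \cap \AF_n} f_{n+1}\chi_\omega\Big\|_{L_1} = \sum_{\omega \in \Co \cap \AF_n}\E\|f_{n+1}\|\chi_\omega.
\end{equation*}
Now Lemma~\ref{TriangleInequality} applies to each convex atom individually, giving $\E\|f_{n+1}\|\chi_\omega \lesssim_\eps \E(\|F_{n+1}\|-\|F_n\|)\chi_\omega$. Summing over $\omega \in \Co \cap \AF_n$ and then over $n$ yields the first displayed inequality in the statement.

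For the second displayed inequality I would invoke the telescoping identity~\eqref{StepwiseSplitting}:
\begin{equation*}
\|F\|_{L_1} = \sum_{n=0}^{\infty}\sum_{\omega \in \AF_n}\E(\|F_{n+1}\|-\|F_n\|)\chi_\omega.
\end{equation*}
By Remark~\ref{JensenRemark}, every summand on the right is non-negative, so throwing away the contribution of the flat atoms (those in $\Fl \cap \AF_n$) can only decrease the total; this gives exactly $\sum_n \sum_{\omega \in \Co \cap \AF_n} \E(\|F_{n+1}\|-\|F_n\|)\chi_\omega \leq \|F\|_{L_1}$.

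Finally, to identify the left-hand side with $\|F_{\Co}\|_{B_1^{0,1}}$, I would note that the blocks $g_{n+1} := \sum_{\omega \in \Co \cap \AF_n} f_{n+1}\chi_\omega$ are in fact the martingale differences of $F_{\Co}$, since for each $\omega \in \AF_n$ the conditional expectation $\E(f_{n+1}\chi_\omega \mid \F_n) = \chi_\omega \E(f_{n+1}\mid\F_n) = 0$. Hence the Besov norm~\eqref{BesovSpace} with $\beta=0$, $p=1$ reads
\begin{equation*}
\|F_{\Co}\|_{B_1^{0,1}} = \sum_{n=0}^\infty \|g_{n+1}\|_{L_1},
\end{equation*}
which matches the left-hand side and completes the chain. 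The only point requiring care is that the implicit constant depends on $\eps$ through Lemma~\ref{TriangleInequality}, but $\eps$ will be fixed from the outset, so this dependence is harmless.
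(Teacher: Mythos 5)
Your proof is correct and follows essentially the same route as the paper: the first inequality is exactly Lemma~\ref{TriangleInequality} summed over the disjoint convex atoms, and the second is the telescoping identity~\eqref{StepwiseSplitting} together with the non-negativity of each summand noted in Remark~\ref{JensenRemark}. The extra check that the blocks $\sum_{\omega \in \Co\cap\AF_n} f_{n+1}\chi_\omega$ are genuine martingale differences of $F_{\Co}$ is a welcome (if routine) addition that the paper leaves implicit.
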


\subsection{Estimate for flat atoms}
We will need a tiny portion of combinatorics here. We connect two flat atoms~$\omega$ and~$\omega'$ by an edge if either~$\omega =  (\omega')^\uparrow$ or~$\omega' = (\omega)^\uparrow$. We get a graph whose vertices are flat atoms. Clearly, it has no cycles. Therefore, it is a union of trees (we split our graph into trees that are maximal by inclusion, i.e. they are connectivity components of our graph). Each tree~$\T$ has a root, which is a flat atom itself. This provides us with the decomposition
\begin{equation}\label{SplittingIntoTrees}
F_{\Fl} = \sum\limits_{\T}F_{\T}, \quad \hbox{where} \quad F_{\T} = \sum\limits_{n\geq 0}\sum\limits_{\omega \in \T\cap\AF_n}f_{n+1}\chi_\omega,
\end{equation}
and the summation in the first formula is over the set of all flat trees.
By the Stopping Time Theorem,
\begin{equation}\label{StoppingTime}
\|F_{\T}\|_{L_1} \lesssim \|F_{\Fl}\|_{L_1} \stackrel{\hbox{\tiny Cor. }\scriptscriptstyle{\ref{ToBesov}}}{\lesssim_{\eps}} \|F\|_{L_1}
\end{equation}
for each flat tree~$\T$. 

The inductive in~$n$ application of Remark~\ref{FlatLInfty} leads to the following lemma.
\begin{Le}\label{Inductive}
	Fix any~$\alpha > \kappa(p^{-1})$. Then, for any tree~$\T$ whose vertices are flat, whose root is~$\omega_0 \in \AF_{n_0}$, and for any~$n \geq n_0$, the inequality
	\begin{equation*}
	\Big\|\sum\limits_{\omega \in \T\cap\AF_n}F_{n+1}\chi_{\omega}\Big\|_{L_p} \lesssim e^{\alpha(n-n_0)}\|F_{n_0}\|_{L_p(\omega_0)}
	\end{equation*}
	is true provided~$\eps$ is sufficiently small.
\end{Le}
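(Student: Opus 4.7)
The plan is to prove the estimate by induction on $n \geq n_0$, consuming one application of Remark~\ref{FlatLInfty} at each level and exploiting pairwise disjointness of atoms inside the $L_p$ norm to keep the bookkeeping clean. First I would fix an auxiliary exponent $\alpha' \in (\kappa(p^{-1}), \alpha)$ and then choose $\eps$ small enough (depending only on $\alpha'$ and $p$) that the $o_{\eps\to 0,p}(1)$ error in Remark~\ref{FlatLInfty} satisfies $\kappa(p^{-1}) + o(1) \leq \alpha'$ uniformly over all flat atoms appearing. With this choice frozen, I aim to establish the slightly sharper inductive bound
\begin{equation*}
\Big\|\sum_{\omega \in \T \cap \AF_n} F_{n+1}\chi_{\omega}\Big\|_{L_p} \leq e^{\alpha'(n-n_0+1)}\,\|F_{n_0}\|_{L_p(\omega_0)},
\end{equation*}
which in turn yields the lemma because the constant factor $e^{\alpha'}$ and the slack $e^{(\alpha-\alpha')(n-n_0)} \geq 1$ get absorbed into $\lesssim$.

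The base case $n = n_0$ is immediate: $\T \cap \AF_{n_0} = \{\omega_0\}$, and a single application of Remark~\ref{FlatLInfty} at the flat atom $\omega_0$ gives $\|F_{n_0+1}\|_{L_p(\omega_0)} \leq e^{\alpha'}\|F_{n_0}\|_{L_p(\omega_0)}$. For the inductive step I would combine three observations. First, pairwise disjointness of atoms at a fixed level rewrites the $L_p$ norm as a power sum:
\begin{equation*}
\Big\|\sum_{\omega' \in \T \cap \AF_{n+1}} F_{n+2}\chi_{\omega'}\Big\|_{L_p}^p = \sum_{\omega' \in \T \cap \AF_{n+1}} \|F_{n+2}\|_{L_p(\omega')}^p.
\end{equation*}
Second, applying Remark~\ref{FlatLInfty} to each flat $\omega'$ gains a uniform factor $e^{p\alpha'}$ per term. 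Third, because $\T$ is a connectivity component of the flat graph, every $\omega' \in \T \cap \AF_{n+1}$ has its parent $(\omega')^{\uparrow}$ in $\T \cap \AF_n$, and since $F_{n+1}$ is constant on each $\omega'$ while the children of any atom partition it,
\begin{equation*}
\sum_{\omega' \in \T \cap \AF_{n+1}} \|F_{n+1}\|_{L_p(\omega')}^p \leq \sum_{\omega \in \T \cap \AF_n} \|F_{n+1}\|_{L_p(\omega)}^p = \Big\|\sum_{\omega \in \T \cap \AF_n} F_{n+1}\chi_{\omega}\Big\|_{L_p}^p.
\end{equation*}
The inductive hypothesis controls the right-hand side, and taking $p$-th roots closes the loop.

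The only subtlety — really a matter of bookkeeping rather than a genuine obstacle — is that each of the $n - n_0$ levels contributes its own $o_{\eps\to 0}(1)$ error from Remark~\ref{FlatLInfty}, so in principle these errors could compound unboundedly. This is exactly why I fix a single auxiliary $\alpha'$ strictly between $\kappa(p^{-1})$ and $\alpha$ at the outset and then calibrate $\eps$ once and for all: the gap $\alpha - \alpha' > 0$ supplies uniform slack, every application of the remark inflates by at most $e^{\alpha'}$, and the total geometric factor never exceeds $e^{\alpha'(n-n_0+1)}$, which is dominated by $C\,e^{\alpha(n-n_0)}$ as required.
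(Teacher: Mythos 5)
Your proof is correct and is exactly the argument the paper intends: the paper dispatches this lemma with the single line ``inductive application of Remark~\ref{FlatLInfty}'', and you have filled in the details faithfully --- the uniform calibration of $\eps$ via Lemma~\ref{InequalityForNon-Intensive2} so the per-level loss is at most $e^{\alpha'}$ with $\kappa(p^{-1})<\alpha'<\alpha$, the $p$-th-power additivity of the $L_p$ norm over disjoint atoms, and the fact that every tree atom at level $n+1$ has its parent in the tree so the level-$(n+1)$ sum is dominated by the level-$n$ one. No gaps.
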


\begin{Cor}\label{SummationOverTree}
	Let~$\T$ be a tree whose vertices are flat and let~$\omega_0 \in \AF_{n_0}$ be its root. Then,
	\begin{equation*}
	\sum\limits_{n \geq n_0}m^{-\frac{p-1}{p}n}\Big\|\sum\limits_{\omega \in \T\cap\AF_n}f_{n+1}\chi_{\omega}\Big\|_{L_{p,1}} \lesssim_p \E \|F_{n_0}\|\chi_{\omega_0}
	\end{equation*}
	provided~$W$ satisfies the second structural condition.
\end{Cor}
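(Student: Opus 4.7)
Denote $g_n := \sum_{\omega\in\T\cap\AF_n} f_{n+1}\chi_{\omega}$. The goal is to bound $\sum_{n\geq n_0}m^{-\frac{p-1}{p}n}\|g_n\|_{L_{p,1}}$ by $\E\|F_{n_0}\|\chi_{\omega_0}$. The plan is to trade the $L_{p,1}$-norm for an $L_q$-norm at a slightly larger exponent $q>p$, estimate the latter via Lemma~\ref{Inductive}, and exploit the fact that $g_n$ is supported inside the single atom $\omega_0$ of measure $m^{-n_0}$.

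The second structural condition is equivalent, by Remark~\ref{StrictInequality}, to $\kappa(p^{-1})<\frac{p-1}{p}\log m$. Since $\kappa$ is continuous and non-increasing, one can select $q>p$ close enough to $p$ and $\alpha$ with $\kappa(q^{-1})<\alpha<\frac{p-1}{p}\log m$, so that Lemma~\ref{Inductive} at exponent $q$ yields
\[
\Big\|\sum_{\omega \in \T\cap\AF_n}F_{n+1}\chi_{\omega}\Big\|_{L_q}\lesssim e^{\alpha(n-n_0)}\|F_{n_0}\|_{L_q(\omega_0)}.
\]
A matching bound for the sum involving $F_n$ in place of $F_{n+1}$ follows by observing that, since $\omega_0$ is the topmost atom of the tree, each $\omega\in\T\cap\AF_n$ with $n>n_0$ has its parent in $\T\cap\AF_{n-1}$; therefore the corresponding $L_q$-norm is dominated by $\bigl\|\sum_{\omega'\in\T\cap\AF_{n-1}}F_n\chi_{\omega'}\bigr\|_{L_q}$, to which Lemma~\ref{Inductive} (at level $n-1$) applies and produces the same growth rate. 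Combining via the triangle inequality, $\|g_n\|_{L_q}\lesssim e^{\alpha(n-n_0)}\|F_{n_0}\|_{L_q(\omega_0)}$.

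The passage from $L_q$ to $L_{p,1}$ rests on the support restriction $\supp g_n\subseteq\omega_0$. Using the layer-cake representation $\|g\|_{L_{p,1}}=p\int_0^{|\omega_0|}g^*(t)\,t^{1/p-1}\,dt$ and Hölder's inequality with conjugate exponents $q$ and $q/(q-1)$ (the condition $q>p$ guarantees integrability of $t^{(1/p-1)q/(q-1)}$ at the origin), one obtains
\[
\|g_n\|_{L_{p,1}}\lesssim_{p,q} m^{-n_0(1/p-1/q)}\|g_n\|_{L_q}.
\]
Since $F_{n_0}$ is constant on $\omega_0$, there is the identity $\|F_{n_0}\|_{L_q(\omega_0)}=m^{n_0(1-1/q)}\E\|F_{n_0}\|\chi_{\omega_0}$. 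Substituting everything into the previous display and multiplying by $m^{-\frac{p-1}{p}n}$ gives
\[
m^{-\frac{p-1}{p}n}\|g_n\|_{L_{p,1}}\lesssim \bigl(e^{\alpha}m^{-\frac{p-1}{p}}\bigr)^{n-n_0}\,\E\|F_{n_0}\|\chi_{\omega_0}.
\]
Since $\alpha<\frac{p-1}{p}\log m$ was chosen precisely so that the geometric ratio is less than one, the series in $n\geq n_0$ converges to a $p$-dependent constant times $\E\|F_{n_0}\|\chi_{\omega_0}$, which is the desired estimate.

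The main obstacle is the passage from the plain $L_q$-estimate of Lemma~\ref{Inductive} to the Lorentz norm $L_{p,1}$ required on the left-hand side: one must pick $q$ only slightly above $p$, so that the second structural condition still leaves room for Lemma~\ref{Inductive} at exponent $q$, while simultaneously the support constraint on $g_n$ supplies the factor $m^{-n_0(1/p-1/q)}$ needed for the Hölder step to survive summation in $n$.
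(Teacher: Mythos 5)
Your argument is correct, and its skeleton coincides with the paper's: use Remark~\ref{StrictInequality} to pick an exponent slightly above $p$ at which $\kappa$ is still below $\frac{p-1}{p}\log m$, feed that exponent into Lemma~\ref{Inductive}, convert the resulting Lebesgue bound into an $L_{p,1}$ bound, and sum a geometric series whose ratio is $e^{\alpha}m^{-\frac{p-1}{p}}<1$. The one place where you genuinely diverge is the conversion to the Lorentz norm. The paper writes $L_{p,1}$ as the real interpolation space $[L_{\frac{pp'}{2p'-p}},L_{p'}]_{(\frac12,1)}$, applies Lemma~\ref{Inductive} at \emph{both} endpoint exponents, and recombines the two factors into $\|F_{n_0}\|_{L_p(\omega_0)}$ using that $F_{n_0}$ is constant on $\omega_0$. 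You instead apply the lemma at a single exponent $q>p$ and use the support constraint $\supp g_n\subseteq\omega_0$ together with H\"older in the layer-cake formula to get $\|g_n\|_{L_{p,1}}\lesssim_{p,q} m^{-n_0(1/p-1/q)}\|g_n\|_{L_q}$; the lost factor $m^{-n_0(1/p-1/q)}$ is exactly recovered when you rewrite $\|F_{n_0}\|_{L_q(\omega_0)}$ as $m^{n_0(1-1/q)}\E\|F_{n_0}\|\chi_{\omega_0}$, so the two routes produce identical exponents. Your version is more elementary (no interpolation theory, one application of Lemma~\ref{Inductive} instead of two) at the cost of having to track the support of $g_n$; the paper's version never uses the support and is the one that generalizes more painlessly when the function is not localized. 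Your explicit justification of the passage from $F_{n+1}$ to $f_{n+1}$ (dominating the $F_n$ part by the level-$(n-1)$ sum, which is legitimate because every non-root tree atom has its parent in the tree) is a detail the paper leaves implicit, and it is handled correctly.
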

\begin{proof}
	By Remark~\ref{StrictInequality}, we may choose~$p'$ to be slightly larger than~$p$ and such that~$\kappa(\frac{1}{p'})< \frac{p-1}{p}\log m$. Then, by the interpolation formula~$L_{p,1} = [L_{\frac{pp'}{2p'-p}},L_{p'}]_{(\frac12,1)}$ (we refer the reader to~\cite{BerghLofstrom} for the basics of interpolation theory) and Lemma~\ref{Inductive},
	\begin{multline*}
	\Big\|\sum\limits_{\omega \in \T\cap\AF_n}F_{n+1}\chi_{\omega}\Big\|_{L_{p,1}} \lesssim \Big\|\sum\limits_{\omega \in \T\cap\AF_n}F_{n+1}\chi_{\omega}\Big\|_{L_{p'}}^{\frac12}\Big\|\sum\limits_{\omega \in \T\cap\AF_n}F_{n+1}\chi_{\omega}\Big\|_{L_{\frac{pp'}{2p'-p}}}^{\frac12} \lesssim\\ e^{\alpha(n-n_0)}\|F_{n_0}\|_{L_{p'}(\omega_0)}^\frac12\|F_{n_0}\|_{L_{\frac{pp'}{2p'-p}}(\omega_0)}^\frac12 = e^{\alpha(n-n_0)}\|F_{n_0}\|_{L_p(\omega_0)}
	\end{multline*}
	for some~$\alpha \in (\kappa(\frac{1}{p'}),\frac{p-1}{p}\log m)$.
	
	Since this result holds for any~$n$, we may pass from capital~$F$ to the lowercase one:
	\begin{equation*}
	\Big\|\sum\limits_{\omega \in \T\cap \AF_{n}} f_{n+1}\chi_{\omega}\Big\|_{L_{p,1}} \lesssim e^{\alpha(n-n_0)}\|F_{n_0}\|_{L_p(\omega_0)}  =m^{n_{0}\frac{p-1}{p}} e^{\alpha(n-n_0)}\E \|F_{n_0}\|\chi_{\omega_0}.
	\end{equation*}
	It remains to estimate the sum of the geometric series:
	\begin{equation*}
	\sum\limits_{n \geq n_0}m^{-\frac{p-1}{p}n}\Big\|\sum\limits_{\omega \in \T\cap\AF_n}f_{n+1}\chi_{\omega}\Big\|_{L_{p,1}} \lesssim \sum\limits_{n \geq n_0}m^{-\frac{p-1}{p}(n-n_0)}e^{\alpha(n-n_0)}\E\|F_{n_0}\|\chi_{\omega_0} \lesssim \E\|F_{n_0}\|\chi_{\omega_0}.
	\end{equation*}
\end{proof}

\section{Proofs of Theorems~\ref{vanSchaftingen} and~\ref{Uncertainty}}

\subsection{Proof of inequality~\eqref{MainInequality}}
We want to estimate the sum
\begin{equation*}
\begin{split}
\sum\limits_{n=1}^{\infty} m^{-\frac{p-1}{p}n}\|f_n\|_{L_{p,1}} \lesssim \sum\limits_{n=0}^{\infty}m^{-\frac{p-1}{p}n}\Big\|\sum\limits_{\omega \in \Co \cap \AF_n}f_{n+1}\chi_\omega\Big\|_{L_{p,1}} + \\ \sum\limits_{n=0}^{\infty}m^{-\frac{p-1}{p}n}\Big\|\sum\limits_{\omega \in \Fl \cap \AF_n}f_{n+1}\chi_\omega\Big\|_{L_{p,1}}.
\end{split}
\end{equation*}
We will deal with the two sums separately. We need a slight improvement of the local embedding~\eqref{LocalEmbedding1} for the Lorentz scale to cope with the sum over convex atoms.
\begin{Le}\label{LocalEmbedding2}
	For any~$n$ and any~$\F_{n+1}$-measurable function~$g$, 
	\begin{equation*}
	\|g\|_{L_{p,1}} \lesssim m^{\frac{p-1}{p}n}\|g\|_{L_1}
	\end{equation*}
	with a uniform constant.
\end{Le}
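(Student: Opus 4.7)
The plan is to exploit that $g$ is constant on each atom of $\AF_{n+1}$, all of mass $m^{-(n+1)}$, so its decreasing rearrangement is completely explicit. Enumerate the atoms as $\omega_1,\omega_2,\ldots,\omega_{m^{n+1}}$ with $\|g(\omega_j)\|$ non-increasing in $j$; then the rearrangement $\|g\|^*$ is the step function equal to $\|g(\omega_j)\|$ on $\bigl((j-1)m^{-(n+1)},\, jm^{-(n+1)}\bigr]$. Substituting this into the standard representation $\|g\|_{L_{p,1}}\asymp_{p}\int_0^\infty \|g\|^*(t)\,t^{1/p-1}\,dt$ and integrating termwise yields
$$
\|g\|_{L_{p,1}}\;\asymp_p\;m^{-(n+1)/p}\sum_{j=1}^{m^{n+1}}\|g(\omega_j)\|\bigl(j^{1/p}-(j-1)^{1/p}\bigr).
$$

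From here I would use the elementary inequality $j^{1/p}-(j-1)^{1/p}\leq 1$ (valid for $p>1$ and $j\geq 1$) to bound the sum by $\sum_j \|g(\omega_j)\| = m^{n+1}\|g\|_{L_1}$, obtaining
$$
\|g\|_{L_{p,1}}\;\lesssim_p\;m^{-(n+1)/p}\cdot m^{n+1}\|g\|_{L_1}\;=\;m^{(n+1)(p-1)/p}\|g\|_{L_1}\;\lesssim\;m^{n(p-1)/p}\|g\|_{L_1},
$$
which is exactly the desired estimate. A parallel route avoiding rearrangements is to use the triangle inequality in $L_{p,1}$ (a genuine norm for $p>1$, up to an absolute renormalization) applied to the decomposition $g=\sum_{\omega\in\AF_{n+1}}g(\omega)\chi_\omega$, together with the identity $\|\chi_\omega\|_{L_{p,1}}\asymp m^{-(n+1)/p}$; this gives the same bound immediately.

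There is no genuine obstacle here: the lemma is the sharp single-scale embedding $\ell^1\hookrightarrow \ell^{p,1}$ restated in the language of $\AF_{n+1}$-measurable functions, and it refines the scalar embedding~\eqref{LocalEmbedding1} (case $q=p$) only by replacing $L_p$ on the left with the smaller Lorentz space $L_{p,1}$ at no cost in the exponent of $m^n$. The only point to check carefully is the constant $p$ hidden in the normalization of $\|\cdot\|_{L_{p,1}}$, which is harmless since the final bound is written with $\lesssim$.
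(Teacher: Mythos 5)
Your proposal is correct. Your primary argument --- computing the decreasing rearrangement of $g$ explicitly as a step function on intervals of length $m^{-(n+1)}$, writing $\|g\|_{L_{p,1}}\asymp_p m^{-(n+1)/p}\sum_j\|g(\omega_j)\|(j^{1/p}-(j-1)^{1/p})$, and using $j^{1/p}-(j-1)^{1/p}\le 1$ --- is a genuinely different and somewhat more elementary route than the paper's: it works directly with the Lorentz quasi-norm and needs no triangle inequality, hence no appeal to the normability of $L_{p,1}$. The paper instead uses exactly your ``parallel route'': it invokes an equivalent norm $\norm\cdot\norm$ on $L_{p,1}$, observes that the norm of a linear map from $\ell_1$ (here, the space of $\F_{n+1}$-measurable functions in the $L_1$ norm) into a normed space is attained at the extreme points of the unit ball, i.e.\ at normalized indicators $\chi_{\omega'}$ of atoms of $\F_{n+1}$, and computes $\|\chi_{\omega'}\|_{L_{p,1}}\asymp\|\chi_{\omega'}\|_{L_p}=m^{\frac{p-1}{p}(n+1)}\|\chi_{\omega'}\|_{L_1}$. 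The rearrangement computation buys you explicitness and shows the inequality is sharp at the level of a single atom; the paper's extreme-point argument is shorter once normability is granted and generalizes immediately to any normable rearrangement-invariant target. Both yield the constant $m^{(n+1)\frac{p-1}{p}}$, which is absorbed into $\lesssim$ exactly as you note.
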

\begin{proof}
	We use the fact that~$L_{p,1}$,~$p > 1$, is a ``normable'' space in the sense that there exists a norm~$\|\|\cdot\|\|$ on~$L_{p,1}$ that is equivalent to the classical quasi-norm on~$L_{p,1}$ (see~\cite{BerghLofstrom}). The norm of a linear operator acting from an~$\ell_1$-space to a normed space is attained at one of the extremal points of~$\ell_1$ (this is a consequence of the Minkowski inequality). In our case, each such extremal point is a characteristic function of an atom~$\omega'$ in~$\F_{n+1}$. We compute the norm:
	\begin{equation*}
	\|\|\chi_{\omega'}\|\| \lesssim \|\chi_{\omega'}\|_{L_{p,1}} \leq \|\chi_{\omega'}\|_{L_p} = m^{\frac{p-1}{p}(n+1)}\|\chi_{\omega'}\|_{L_1}.
	\end{equation*} 
	We have used the fact that on the set of characteristic functions of sets, the~$L_{p,q}$ norms for different~$q$ are comparable.
\end{proof}
It remains to combine Corollary~\ref{ToBesov} and Lemma~\ref{LocalEmbedding2} to get a half of the desired inequality~\eqref{MainInequality}:
\begin{equation*}
\sum\limits_{n=0}^{\infty}m^{-\frac{p-1}{p}n}\Big\|\sum\limits_{\omega \in \Co \cap \AF_n}f_{n+1}\chi_\omega\Big\|_{L_{p,1}} \lesssim \|F\|_{L_1}.
\end{equation*}

Let~$\T_1,\T_2,\ldots$ be all the trees in our graph, let~$\omega_1,\omega_2,\ldots$ be their roots, and let~$\omega_j \in \AF_{n_j}$. We use the triangle inequality (for that we need to pass from the standard quasi-norm in~$L_{p,1}$ to the norm~$\|\|\cdot\|\|$, use the triangle inequality, and then return back to the classical semi-norm):
\begin{equation*}
\begin{split}
\sum\limits_{n=0}^{\infty}m^{-\frac{p-1}{p}n}\Big\|\sum\limits_{\omega \in \Fl \cap \AF_n}f_{n+1}\chi_\omega\Big\|_{L_{p,1}} \lesssim  \sum\limits_{j}\sum\limits_{n=0}^{\infty}m^{-\frac{p-1}{p}n}\Big\|\sum\limits_{\omega \in \T_j \cap \AF_n}f_{n+1}\chi_\omega\Big\|_{L_{p,1}} \\ \stackrel{\hbox{\tiny Cor.~\ref{SummationOverTree}}}{\lesssim}\sum\limits_{j} \E\|F_{n_j}\|\chi_{\omega_j}.
\end{split}
\end{equation*}
Note that the parent of each atom~$\omega_j$ is convex (otherwise the tree~$\T_j$ is not maximal by inclusion). Thus, by Lemma~\ref{TriangleInequality},
\begin{equation*}
\sum\limits_{j} \E\|F_{n_j}\|\chi_{\omega_j} \lesssim \sum\limits_{j} \E(\|F_{n_j}\| - \|F_{n_j-1}\|)\chi_{(\omega_{j})^\uparrow} \stackrel{\scriptscriptstyle\eqref{StepwiseSplitting}}{\lesssim} \E\|F\|_{L_1}
\end{equation*}
since each atom~$(\omega_j)^\uparrow$ has at most~$m$ children and thus, arises in the sum at most~$m$ times. This finishes the proof of~\eqref{MainInequality}.

\subsection{Proof of inequality~\eqref{If}}
%Assume the contrary, let~$\dH \mu < 1+\frac{\kappa'(1)}{\log m}$ for some~$\mu \in \W$. We pick some~$p$,~$\alpha$, and~$\eps$ to be chosen later and split the martingale~$F$ that represents~$\mu$ into martingales~$F_{\Co}$ and~$F_{\Fl}$. Note that~$F_{\Co}$ is in fact, an ``absolutely continuous'' martingale, i.e. it represents an~$L_1$ function by Corollary~\ref{ToBesov}. Therefore,~$\dH \mu_{\Fl} < 1+\frac{\kappa'(1)}{\log m}$, where~$\mu_{\Fl}$ is the limiting measure for~$F_{\Fl}$.

We need Lemma~$2$ in~\cite{SW}, which, in our setting, reads as follows. 
\begin{Le}
	Suppose that for some fixed number~$\gamma > 0$ and any collection~$C$ of disjoint atoms, the inequality
	\begin{equation}\label{OurFrostman}
	\sum\limits_{n}\sum\limits_{\omega \in C\cap \AF_n}\E|F_n\chi_{\omega}| \lesssim \bigg(\sum\limits_{n}\#\{\omega \in C\cap \AF_n\}m^{-n\beta}\bigg)^{\gamma}
	\end{equation} 
	holds true uniformly. Then,~$\dH \mu \geq \beta$.
\end{Le}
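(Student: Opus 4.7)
The statement is a Frostman-type lemma in the tree metric on $\mathbb{T}$, and I would prove it by the standard contradiction plus covering argument.

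Assume $\dH \mu < \beta$: there is a Borel set $F$ with $\dim_{\mathrm{H}} F < \beta$ and $\mu(F) \neq 0$. Since $|\mu|$ is a finite Borel measure on the compact space $\mathbb{T}$, for every $\eta > 0$ outer regularity furnishes an open $V \supseteq F$ with $|\mu|(V \setminus F) < \eta$. Using the dimension hypothesis I choose atoms $\omega_i \in \AF_{n_i}$ covering $F$, contained in $V$, and satisfying $\sum_i m^{-n_i \beta} < \eta$; inclusion in $V$ is achievable because open sets in the tree metric are disjoint unions of atoms, so every $x \in F \subseteq V$ eventually lies in an atom inside $V$. Since any two atoms are either disjoint or nested, keeping only the maximal members yields a disjoint cover, and their union $U = \bigsqcup_i \omega_i$ is an open set with $F \subseteq U \subseteq V$.

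Applying \eqref{OurFrostman} to $C = \{\omega_i\}$ and using the identity $\E|F_{n_i}\chi_{\omega_i}| = |\mu(\omega_i)|$ (which holds because $F_{n_i}$ is constant on $\omega_i$) yields $\sum_i |\mu(\omega_i)| \lesssim \eta^\gamma$. The triangle inequality then gives $|\mu(U)| = |\sum_i \mu(\omega_i)| \lesssim \eta^\gamma$, whence
\begin{equation*}
|\mu(F)| \le |\mu(U)| + |\mu(U \setminus F)| \le |\mu(U)| + |\mu|(V \setminus F) \lesssim \eta^\gamma + \eta.
\end{equation*}
Sending $\eta \to 0$ forces $\mu(F) = 0$, contradicting the choice of $F$.

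The main obstacle is the joint construction of the cover $\{\omega_i\}$: it must simultaneously cover $F$, lie inside $V$, and have small $\beta$-content. Starting from an $\eta$-efficient Hausdorff cover of $F$ and then refining only the atoms that cross $\partial V$ can inflate the $\beta$-content when $\beta < 1$, so instead one chooses the cover at a uniformly deep level $n_i \geq N_0$ so that it automatically fits inside the open set $V$; matching the two smallness conditions for arbitrary $\beta$ is the only genuine subtlety. The rest is a routine application of the hypothesis together with the dictionary $\E|F_n \chi_\omega| = |\mu(\omega)|$ which is what makes \eqref{OurFrostman} usable as a Frostman-type bound in the vector-valued setting.
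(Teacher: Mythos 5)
Your argument is correct and is essentially the proof the paper does not reproduce: the text only cites Lemma~2 of~\cite{SW} and remarks that the covering theorems transfer to~$\mathbb{T}$, and your outer-regularity-plus-efficient-cover scheme is exactly that argument, with the ultrametric simplification (atoms are nested or disjoint, so passing to maximal atoms gives a disjoint cover for free) correctly exploited. One step should be made explicit: your fix for keeping the cover inside~$V$ --- choosing all atoms at level~$n_i \ge N_0$ --- does not work for a general Borel set~$F$ (e.g.\ a countable dense~$F$ of dimension~$0$ admits no uniform level at which atoms meeting~$F$ sit inside a given open~$V \supsetneq F$). You should first use inner regularity of~$\|\mu\|$ to replace~$F$ by a compact~$K \subseteq F$ with~$\mu(K) \ne 0$ (possible since~$\mu(K) \to \mu(F) \ne 0$ as~$\|\mu\|(F\setminus K) \to 0$); then~$\dist(K, V^c) > 0$ and the uniform-depth choice is legitimate, while~$\dH K \le \dH F < \beta$ still gives covers of arbitrarily small~$\beta$-content and arbitrarily small mesh. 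With that reduction inserted, the rest of your proof --- the identity~$\E\|F_{n_i}\chi_{\omega_i}\| = \|\mu(\omega_i)\|$ (which presumes the conditional-expectation normalization~$F_n = \sum_\omega \mu(\omega)\chi_\omega/|\omega|$, clearly the intended reading of~\eqref{MeasureGeneratesAMartingale}), the application of~\eqref{OurFrostman}, and the estimate~$|\mu(F)| \lesssim \eta^\gamma + \eta$ --- is complete.
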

This version slightly differs from the original one, which was formulated for measures on~$\mathbb{R}^d$ instead of~$\mathbb{T}$. The proof works verbatim, because the Besicovitch and the Vitali Covering Theorems may be transferred to~$\mathbb{T}$. In fact, the geometry of balls in~$\mathbb{T}$ is much simpler than in~$\mathbb{R}^d$, because two balls either do not intersect, or one of them contains the other. In particular, this simple geometric property of balls immediately leads to the weak Besicovitch covering property, which in its turn, provides the proper analogs of Besicovitch and Vitali coverings. See~\cite{Rigot} for the details.

Let us first consider the case of a single tree, i.e. ~$F = F_{\T}$ and~$\mu = \mu_\T$. We need to prove inequality~\eqref{OurFrostman} with~$\beta$ arbitrarily close to~$1+\frac{\kappa'(1)}{\log m}$. We first combine H\"older's inequality with Lemma~\ref{Inductive}:
\begin{multline*}
\sum\limits_{\omega \in C\cap \AF_n}\E|F_n\chi_{\omega}| \leq \Big\|\sum\limits_{\omega \in C\cap\AF_n}F_{n}\chi_{\omega}\Big\|_{L_p}\Big(m^{-n}\#\{\omega \in C \cap \AF_n\}\Big)^{\frac{p-1}{p}} \lesssim\\ e^{\alpha(n-n_0)}m^{-n\frac{p-1}{p}}\Big(\#\{\omega \in C \cap \AF_n\}\Big)^{\frac{p-1}{p}}, \quad \alpha > \kappa(p^{-1}).
\end{multline*}
Therefore, by yet another H\"older's inequality,
\begin{multline*}
\sum\limits_{n}\sum\limits_{\omega \in C\cap \AF_n}\E|F_n\chi_{\omega}| \lesssim \sum\limits_{n}e^{\alpha(n-n_0)}m^{-n\frac{p-1}{p}}\Big(\#\{\omega \in C \cap \AF_n\}\Big)^{\frac{p-1}{p}} \leq\\ \Big(\sum\limits_{n}\big(e^{\alpha(n-n_0)}m^{(\beta-1)n\frac{p-1}{p}}\big)^p\Big)^{\frac{1}{p}}\Big(\sum\limits_{n}\#\{\omega \in C\cap \AF_n\}m^{-n\beta}\Big)^{\frac{p-1}{p}}.
\end{multline*}
We have proved~\eqref{OurFrostman} with~$\gamma = \frac{p-1}{p}$ provided the series in the first multiple converges, that is
\begin{equation*}
\alpha +\frac{p-1}{p}(\beta-1)\log m < 0.
\end{equation*}
Thus, it suffices to choose~$\alpha$ for which this inequality holds true. Since we can pick~$\alpha$ as close to~$\kappa(p^{-1})$ as we want, we are left with proving
\begin{equation}\label{InequalityForp}
\kappa(p^{-1}) + (1 - p^{-1})(\beta-1)\log m < 0.
\end{equation}
This inequality is true when~$\log m(\beta -1) <  \kappa'(1)$ and~$p$ is sufficiently close to~$1$. So, we first choose~$p$ in such a way that~\eqref{InequalityForp} is true, then choose~$\alpha$ sufficiently close to~$\kappa(p^{-1})$, and after that choose~$\eps$ in Lemma~\ref{InequalityForNon-Intensive2}. We have proved that~$\dH \mu_\T \geq 1+\frac{\kappa'(1)}{\log m}$ for a single tree~$\T$.

Before we pass to the case of a general measure, we need to get more information about a single tree. This requires yet another tiny portion of combinatorics. We call a convex atom~$\omega$ a fruit of~$\T$ provided~$\omega^\uparrow \in \mathcal{T}$. We call a point~$x\in \mathbb{T}$ a leaf of~$\T$ provided there is an infinite sequence~$\{\omega_n\}_{n}$ such that~$\omega_n\in \T\cap\AF_n$ and~$x\in\omega_n$. In other words, the leafs are the points that correspond to infinite paths inside~$\T$. It is easy to see that
\begin{equation*}
\omega_0 = \Big(\bigcup_{\genfrac{}{}{0pt}{-2}{\scriptscriptstyle \omega\ \hbox{\tiny is a}}{\hbox{\tiny fruit of }\scriptscriptstyle \T}}\omega\Big) \cup \{x\mid x\ \hbox{is a leaf of }\T\},
\end{equation*}
where~$\omega_0$ is the root of~$\T$.
So, we split the measure~$\mu_\T$ into two parts~$\mu_{\T,c}$ and~$\mu_{\T,s}$ that are the restrictions of~$\mu_\T$ to the union of all fruits and to the set of leafs, correspondingly. 

Now we are ready to cope with the case of a general measure~$\mu \in \W$. We pick some~$\eps$ to be chosen later and split~$\mu$ into convex part~$\mu_{\Co}$ and flat part~$\mu_{\Fl}$. Note that the convex part is an~$L_1$ function by Corollary~\ref{ToBesov}. So, it suffices to deal with the flat part. For any tree~$\T$ in our decomposition,
\begin{equation*}
\|\mu_{\T,c}\|_{L_1} = \sum\limits_{\genfrac{}{}{0pt}{-2}{\scriptscriptstyle \omega\ \hbox{\tiny is a}}{\hbox{\tiny fruit of}\; \scriptscriptstyle \T}} \|\mu_{\T,c}\chi_{\omega}\|_{L_1} \stackrel{\scriptscriptstyle \omega\ \hbox{\tiny is convex}}{\lesssim} \sum\limits_{n=0}^{\infty}\sum\limits_{\genfrac{}{}{0pt}{-2}{\scriptscriptstyle\omega \in \AF_n}{\scriptscriptstyle \omega\; \hbox{\tiny fruit of }\; \T}}\E(\|F_{n+1}\| - \|F_n\|)\chi_{\omega}.
\end{equation*}
So, if~$\T_1,\T_2,\ldots$ are all the trees of our decomposition, then~$\sum_j \mu_{\T_j,c}$ is an~$L_1$-function by formula~\eqref{StepwiseSplitting} and the fact that different trees have different fruits. As for the singular parts, we note that they have disjoint supports. Thus,
\begin{equation*}
\dH\sum\limits_j\mu_{\T_j,s} \geq \inf_j\dH\mu_{\T_j,s},
\end{equation*}
which is not less than any number larger than~$1+\frac{\kappa'(1)}{\log m}$, provided~$\eps$ is sufficiently small, as we proved before.

\subsection{Proof of sharpness in Theorem~\ref{Uncertainty}.}
In fact, the example showing sharpness was constructed in~\cite{Eggleston}. We only provide some details. We start with the Eggleston formula. We may represent points~$x \in [0,1]$ in~$m$-adic system, i.e.
\begin{equation*}
x = \sum\limits_{0}^{\infty}c(x,n)m^{-n},
\end{equation*}
where~$c(x,m) \in [0..m-1]$ are the ``digits'' of~$x$. We should also assume that for any~$x$ the sequence~$c(x,m)$ is not identical~$m-1$ eventually. Let~$p_1,p_2,\ldots,p_m$ be the probability distribution on~$[0..m-1]$. Consider the set
\begin{equation*}
S = \Big\{x\in [0,1]\,\Big|\;\forall j = 0,1,\ldots,m-1 \quad \lim\limits_{n\to \infty}\frac{\#\{k\leq n\mid c(x,k) = j\}}{n} = p_j\Big\}.
\end{equation*}
Eggleston's formula (see~\cite{Eggleston}) says
\begin{equation}\label{Eggleston'sFormula}
\dH S = -\frac{\sum_{j=1}^m p_j \log p_j}{\log m},
\end{equation}
where we measure the Hausdorff dimension of~$S$ as of a subset of~$[0,1]$ %(alternatively, one can rely upon Shannon--McMillan--Breiman theorem). 
In fact, we need the inequality~$\dH S \leq -\frac{\sum_{j=1}^m p_j \log p_j}{\log m}$ only. This inequality is simpler than the reversed one and the argument of~\cite{Eggleston} can be transferred to the case where we replace~$[0,1]$ with~$\mathbb{T}$ with ease (what we need here is that the~$m$-adic subintervals of~$[0,1]$ have the same diameter as the corresponding cylinders in~$\mathbb{T}$).

We note that the infimum in formula~\eqref{FormulaForDerivativeOfKappa} is attained at some~$v\in V$ (since~$t\log t$ is a continuous function). We repeat the construction of Lemma~\ref{CounterExampleToHLS} by writing
\begin{equation*}
h_{n+1} = \sum\limits_{\omega \in \AF_n}\J_{\omega}[v]\cdot\chi_{\omega}
\end{equation*}
and
\begin{equation*}
F_n = \prod\limits_{i=1}^{n} (1+h_i).
\end{equation*}
Then~$F$ is an~$L_1$ martingale. Let~$\mu$ be its limit measure. Put~$p_j = \frac{1+v_j}{m}$ in the definition of set~$S$. It suffices to prove that~$\mu(S) = 1$ since~$\mu \in\W$ and
\begin{equation*}
\dH S \stackrel{\scriptscriptstyle{\eqref{Eggleston'sFormula}}}{=} -\frac{\sum_{j=1}^m p_j \log p_j}{\log m} = -\frac{\sum_{j=1}^m (1+v_j) \big(\log (1+v_j)-\log m\big)}{m\log m} = 1+\frac{\kappa'(1)}{\log m}
\end{equation*} 
since~$v$ is the extremal vector in~\eqref{FormulaForDerivativeOfKappa}. To prove that~$\mu(S) = 1$, we consider yet another probability space~$(\mathbb{T},\mu)$ (clearly,~$\mu$ is a probability measure). Consider the random variables
\begin{equation*}
\xi_{n,j}(x) = \chi_{\{c(x,n) = j\}}.
\end{equation*}
Note that~$\E \xi_{n,j} = p_j$ and these random variables are independent (when~$j$ is fixed). Thus, by the Kolmogorov Strong Law of Large Numbers,
\begin{equation*}
\frac{\#\{k\leq n\mid c(x,k) = j\}}{n} = \frac{\sum_{k \leq n}\xi_{n,j}}{n} \to p_j
\end{equation*}
for any~$j$ almost surely (with respect to~$\mu$). Therefore,~$\mu(S) = 1$ and Theorem~\ref{Uncertainty} is proved.

\section{Comparison with Roginskaya--Wojciechowski conjecture}
Let us consider the translation invariant case again (the space~$W$ is given by formula~\eqref{ShiftInvariant}). We may try to make a conjecture similar to Conjecture~$1$ in~\cite{RW}.
\begin{Conj}
	If~$\dim W_{\gamma} = k$ for all~$\gamma \in G \setminus \{0\}$, then~$\dH \mu \geq \frac{m-k}{m}$.
\end{Conj}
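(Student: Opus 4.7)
The natural strategy is to derive the conjecture from Theorem~\ref{Uncertainty}: it suffices to establish
\[
\kappa'(1)\;\ge\;-\frac{k}{m}\log m
\]
under the hypothesis $\dim W_{\gamma}=k$ for every $\gamma\in G\setminus\{0\}$. Introducing the probability vector $p_{j}:=(1+v_{j})/m$ on $G$, the formula \eqref{FormulaForDerivativeOfKappa} rewrites this as the entropy bound
\[
H(p)\;:=\;-\sum_{j=1}^{m}p_{j}\log p_{j}\;\ge\;\frac{m-k}{m}\log m,
\]
which one must verify whenever the real vector $v=mp-\mathbf{1}_{G}\in V$ satisfies $v\otimes a\in W$ for some $a\ne 0$.

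In the translation-invariant setting, the membership $v\otimes a\in W$ reads, via the Fourier transform on $G$, as $\hat v(\gamma)\, a\in W_{\gamma}$ for every $\gamma\ne 0$, and hence forces $\hat v$ to be supported in
\[
T_{a}\;:=\;\{\gamma\in G\setminus\{0\}\mid a\in W_{\gamma}\}.
\]
The problem therefore splits into two ingredients: first, bound $|T_{a}|$ in terms of $k$, using the assumption $\dim W_{\gamma}=k$ (possibly together with an implicit cancellation condition $\bigcap_{\gamma}W_{\gamma}=\{0\}$); second, convert the Fourier-sparsity $|\supp\hat v|\le |T_{a}|$, coupled with the sign constraint $p\ge 0$, into the desired entropy lower bound for $p$.

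For the combinatorial half, a dimension count gives $\sum_{\gamma\ne 0}\dim W_{\gamma}=k(m-1)$, and since $a\in \bigcap_{\gamma\in T_{a}}W_{\gamma}$ the nonvanishing of $a$ forces a codimension constraint on the subfamily $\{W_{\gamma}\}_{\gamma\in T_{a}}$. For families in sufficiently general position this should yield a pointwise bound of the form $|T_{a}|\le m-k-1$ or better. The linear-algebraic difficulty here is comparable to analogous geometric counting lemmas used in \cite{RW} and \cite{AW}, which I would adapt to the finite-group setting.

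The principal obstacle, however, lies in the analytic half: one needs a Hirschman--Beckner type entropy uncertainty principle on $G$, tailored to the specific quantity $H((\mathbf{1}+v)/m)$ rather than the $L^{2}$-entropies that appear in the classical statements. The Donoho--Stark bound $|\supp v|\cdot|\supp\hat v|\ge m$ captures only the support sizes, whereas the conjecture requires the sharp constant $(m-k)/m\log m$ at the level of Shannon entropy. The sharpness example in the proof of Theorem~\ref{Uncertainty}, based on Eggleston's formula, strongly suggests that the extremal $p$'s are uniform measures on cosets of subgroups of $G$ whose index is dictated by the Fourier support of $v$; matching these extremals against the geometry of the family $\{W_{\gamma}\}$ and verifying the exact constant is the crux. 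In very small cases (for instance $m=4$, $k=1$, $\ell=2$) one can already compute the value of $1+\kappa'(1)/\log m$ furnished by Theorem~\ref{Uncertainty} and compare it with $(m-k)/m$; doing so serves both to calibrate the combinatorial bound on $|T_{a}|$ and to test whether the conjectural constant is truly attained or whether the hypothesis must be strengthened (e.g.\ by a genericity assumption on the $W_{\gamma}$ beyond equality of dimensions) in order for the bound to hold.
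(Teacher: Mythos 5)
The statement you are trying to prove is false, and the paper states it only in order to refute it in one line: ``It is easy to see that such a conjecture cannot hold.'' The hypothesis $\dim W_{\gamma}=k$ for all $\gamma\ne 0$ does not preclude $\bigcap_{\gamma\ne 0}W_{\gamma}\ne\{0\}$ --- take all $W_{\gamma}$ equal to one fixed $k$-dimensional subspace containing some $a\ne 0$. Then $(m\delta_{0}-{\bf 1}_{G})\otimes a\in W$, i.e.\ $W$ contains a rank-one vector $v_{\delta}\otimes a$ with $v_{\delta}$ as in \eqref{DeltaVector}, the second structural condition fails, and the multiplicative martingale of Lemma~\ref{CounterExampleToHLS} produces a genuine delta measure in $\W$, so $\dH\mu=0<\frac{m-k}{m}$. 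In the same situation formula \eqref{FormulaForDerivativeOfKappa} gives $\kappa'(1)=-\log m$ (the infimum is attained at $v=v_{\delta}$, i.e.\ $p=\delta_{0}$ with zero entropy), so Theorem~\ref{Uncertainty} yields only the trivial bound $\dH\mu\ge 0$; there is no hidden entropy inequality to be extracted from the dimension hypothesis alone.

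Your proposal in fact localizes the failure correctly without drawing the conclusion: the ``combinatorial half'' asserting $|T_{a}|\le m-k-1$ holds only ``for families in sufficiently general position,'' and the conjecture carries no such hypothesis --- in the counterexample above $T_{a}=G\setminus\{0\}$, so $\hat v$ may be supported everywhere and $v=v_{\delta}$ is admissible. Your closing remark that the hypothesis ``must be strengthened'' is the right instinct; the paper's substitute is Corollary~\ref{antisymmetry}, which assumes that each set $W^{-1}(a)=\{\gamma\mid a\in W_{\gamma}\cap W_{-\gamma}\}$ is contained in a subgroup of size $K$ and concludes $\dH\mu\ge 1-\frac{\log K}{\log m}$, deduced from Theorem~\ref{Uncertainty} by exactly the route you outline (bounding the entropy of $({\bf 1}+v)/m$ via the Fourier support of $v$, whose extremizers are normalized indicators of cosets of subgroups). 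So the correct deliverable here is the refutation, not a proof.
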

It is easy to see that such a conjecture cannot hold: if~$\cap_{\gamma \ne 0} W_{\gamma} \ne \{0\}$, then the second structural assumption is violated, and we can find a delta-measure in~$\W$. However, we can easily deduce  the statement below from Theorem~\ref{Uncertainty}.
\begin{Cor}\label{antisymmetry}
	For any~$a\in\mathbb{R}^\ell$, consider the set~$W^{-1}(a)$ given by the rule:
	\begin{equation*}
	W^{-1}(a) = \{\gamma \in G \mid a \in W_\gamma\cap W_{-\gamma}\}.
	\end{equation*}
	If, for any~$a \in\mathbb{C}^\ell \setminus \{0\}$, the set~$W^{-1}(a)$ is contained in a subgroup of size~$K$, then~$\dH\mu \geq 1-\frac{\log K}{\log m}$ for any~$\mu \in \W$.
\end{Cor}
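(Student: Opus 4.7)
The plan is to reduce Corollary~\ref{antisymmetry} directly to Theorem~\ref{Uncertainty}: it suffices to show that the structural hypothesis forces $\kappa'(1) \geq -\log K$, since then $1 + \kappa'(1)/\log m \geq 1 - \log K/\log m$. To that end, fix an admissible pair $v\otimes a \in W$ with $v_j \geq -1$ and $a\ne 0$, and set $p_j := (1+v_j)/m$. The mean-zero constraint $\sum_j v_j = 0$ makes $p$ a probability vector on $G$, and a short algebraic rewrite converts~\eqref{FormulaForDerivativeOfKappa} into
\begin{equation*}
-\frac{1}{m}\sum_{j=1}^m (1+v_j)\log(1+v_j) = H(p) - \log m,
\end{equation*}
where $H$ denotes Shannon entropy. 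So the whole problem reduces to proving $H(p) \geq \log(m/K)$ for every admissible $p$.

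The key step is identifying the Fourier support of $v$ on $G$. From the description~\eqref{ShiftInvariant}, the condition $v\otimes a \in W$ reads $\hat v(\gamma)\,a \in W_\gamma$ for every $\gamma\ne 0$, so $\hat v(\gamma)\ne 0$ forces $a \in W_\gamma$. Since $v$ is real-valued (cf.\ Remark~\ref{ComplexScalars}) one has $\hat v(-\gamma) = \overline{\hat v(\gamma)}$, which by the same argument at $-\gamma$ yields $a \in W_{-\gamma}$. Hence $\supp \hat v \subset W^{-1}(a)\cup\{0\}$, which by hypothesis sits inside a subgroup $H \leq G$ with $|H| = K$.

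Now Pontryagin duality on the finite abelian group $G$ says that a function with Fourier support contained in $H$ is constant on cosets of the annihilator $H^\perp \leq G$, and $|H^\perp| = m/K$. Consequently $p$ takes at most $K$ distinct values, each on a block of size $m/K$. Pushing forward to a probability vector $r$ on $\{1,\dots,K\}$ one computes $H(p) = H(r) + \log(m/K)$, and non-negativity of Shannon entropy delivers $H(p) \geq \log(m/K)$. Taking the infimum over admissible $v\otimes a$ then gives $\kappa'(1) \geq -\log K$, and Theorem~\ref{Uncertainty} closes the argument.

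The main obstacle is the Fourier-support reduction: one must genuinely exploit the reality of $v$ fixed in Remark~\ref{ComplexScalars}, which is exactly what matches the symmetric definition of $W^{-1}(a)$ built from both $W_\gamma$ and $W_{-\gamma}$; if $v$ were allowed to be complex, one would only get $\supp \hat v \subset \{\gamma : a\in W_\gamma\}$, which need not lie in any small subgroup. The remaining steps are routine entropy manipulations and standard Pontryagin duality.
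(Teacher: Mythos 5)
Your proof is correct and follows exactly the route the paper indicates: the paper states that Corollary~\ref{antisymmetry} is "easily deduced" from Theorem~\ref{Uncertainty} and omits the details, and your argument — rewriting~\eqref{FormulaForDerivativeOfKappa} as Shannon entropy, using reality of $v$ to place $\supp\hat v$ inside $W^{-1}(a)$ and hence inside a subgroup of size $K$, and then invoking Pontryagin duality to force $p$ to be constant on the $K$ cosets of the annihilator so that $H(p)\geq\log(m/K)$ — is precisely the intended filling-in of those details. No gaps.
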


We can regard it as a model of the following conjecture from~\cite{AW}.
%\begin{Def}
	%For any homogenous bundle $\phi: S^{n-1} \to S^{n-1}$, by $M_{\phi}(\mathbb{R}^{n})$  we denote the set of vector measures such that $\hat{\mu}(\xi) \parallel \phi(\frac{\xi}{|\xi|})$ for $\xi \neq 0$.
%\end{Def}

\begin{Def} We say that a non-constant function~$\Omega\colon S^{d-1} \to G(d,1)$ is $k$-antisymmetric \\ ($k=0,1,...,d-1$), if
\begin{equation*}
\bigcap_{\xi \in S^{d-1}\cap L} \Omega(\xi)= \{0\}
\end{equation*}
for each $(k+1)$-dimensional subspace  $L \subset \R^{d}$. Denote
	\begin{equation*}
	a(\Omega) =\min \{k\mid \Omega \ \ \hbox{is~$k$-antisymmetric}\}.
	\end{equation*}
\end{Def}
\begin{Conj}
	\label{ascon}
	If $\mu \in \mathfrak{BV}$~for a smooth non-constant function $\Omega$, then
	\begin{equation*}
	\dH(\mu) \geq d-a(\Omega).
	\end{equation*}
\end{Conj}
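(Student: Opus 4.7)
The plan is to mirror the strategy behind Theorem~\ref{Uncertainty} in the Euclidean setting: prove a linear-algebraic flatness lemma in the spirit of Lemma~\ref{InequalityForNon-Intensive2}, run a convex/flat dyadic decomposition as in Section~3, and close with the Frostman criterion used in~\cite{SW}. The rigidity driving everything is the following. Suppose a non-zero finite measure $\nu\in\mathfrak{BV}$ is supported on a linear subspace $P\subset\R^d$ of dimension $k$. Then $\hat\nu(\xi)$ depends only on $\pi_P\xi$, so letting $\xi=\xi_P+t\eta$ with $\eta\in P^\perp\cap S^{d-1}$ and $t\to\infty$, continuity of $\Omega$ together with the defining constraint of $\mathfrak{BV}$ forces
\begin{equation*}
\hat\nu(\xi_P)\in\bigcap_{\eta\in P^\perp\cap S^{d-1}}\Omega(\eta).
\end{equation*}
When $k<d-a(\Omega)$, the dimension of $P^\perp$ exceeds $a(\Omega)$, and the intersection above is $\{0\}$ by the very definition of $a(\Omega)$, so $\nu=0$. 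This is the Euclidean analog of the algebraic contradiction obtained at the end of the proof of Lemma~\ref{InequalityForNon-Intensive2}.

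The central step is to promote this rigidity to a quantitative local bound. Work with the dyadic filtration of sidelength $2^{-n}$ and, following Definition~\ref{DefinitionOfConvexity}, declare a cube $Q$ to be flat if $\|\mu\|(Q)-|\mu(Q)|\leq\eps|\mu(Q)|$. A blow-up/compactness argument parallel to the proof of Lemma~\ref{InequalityForNon-Intensive2} should produce, for each $p>1$ sufficiently close to $1$, a constant $\kappa_\Omega(p^{-1})<(d-a(\Omega))\tfrac{p-1}{p}\log 2$ such that every flat cube $Q\in\AF_n$ satisfies
\begin{equation*}
\biggl(\sum_{\substack{Q'\in\AF_{n+1}\\ Q'\subset Q}}|\mu(Q')|^p\biggr)^{1/p}\leq e^{\kappa_\Omega(p^{-1})+o_{\eps\to 0}(1)}|\mu(Q)|.
\end{equation*}
The extremal configuration for this inequality is a Dirac mass collapsed onto a plane of dimension at most $d-a(\Omega)-1$, which is excluded for $\mathfrak{BV}$-measures by the rigidity of the previous paragraph; the exponent $\kappa_\Omega$ quantifies how far short of a full delta any flat $\mathfrak{BV}$-measure must fall.

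Once this local bound is in hand, the scheme of Sections~3--4 transfers without change. Split $\mu=\mu_{\Co}+\mu_{\Fl}$ as in~\eqref{ConvexAndFlat}; the convex part is controlled in $B_1^{0,1}$ by the Euclidean analog of Corollary~\ref{ToBesov} and is therefore absolutely continuous, contributing full dimension~$d$. The flat part organises into maximal flat dyadic trees $\T$, and iterating the one-step bound along a tree gives, as in Lemma~\ref{Inductive},
\begin{equation*}
\biggl(\sum_{Q\in\T\cap\AF_n}|\mu(Q)|^p\biggr)^{1/p}\lesssim e^{\alpha(n-n_0)}|\mu(Q_0)|
\end{equation*}
for any $\alpha>\kappa_\Omega(p^{-1})$ and the root $Q_0\in\AF_{n_0}$ of $\T$. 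Two applications of H\"older's inequality, exactly as in the proof of~\eqref{If}, then verify the testing condition $\sum_{Q\in C}|\mu(Q)|\lesssim\bigl(\sum_{Q\in C}|Q|^{\beta/d}\bigr)^{(p-1)/p}$ for every disjoint dyadic family $C$ and every $\beta<d-a(\Omega)$, and the Euclidean version of Lemma~2 of~\cite{SW} (whose Besicovitch/Vitali ingredients already work on $\R^d$) then yields $\dH\mu\geq d-a(\Omega)$.

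The hard part will be the quantitative flatness lemma. In the martingale setting the constraint $\{b_j\}_j\in W$ lives in the finite-dimensional space $V\otimes\mathbb{C}^\ell$, so a one-line compactness argument extracts a limiting rank-one configuration and yields the bound. In the Euclidean world the constraint $\hat\mu(\xi)\in\Omega(\xi/|\xi|)$ couples all frequencies, and a blow-up of $\mu$ around the centre of a flat cube must be analysed simultaneously in the high-frequency regime (where, by smoothness of $\Omega$ on a small neighbourhood of the cube's frequency scale, $\Omega(\xi/|\xi|)$ is essentially constant in direction and the antisymmetry of $\Omega$ is effective) and in the low-frequency residue (which is smooth and should be absorbed into the convex/absolutely continuous part of the decomposition). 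Separating these regimes and controlling the cross terms is the delicate point, and it is at exactly this juncture that the smoothness hypothesis on $\Omega$ is most heavily used.
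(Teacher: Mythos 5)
You are attempting an open problem: the statement in question is Conjecture~\ref{ascon}, quoted from~\cite{AW}, and the paper contains no proof of it --- it only proves the martingale model (Theorem~\ref{Uncertainty} and Corollary~\ref{antisymmetry}) and expresses the hope that the method can be adapted to $\R^d$. So your text cannot be measured against a proof in the paper; it has to stand on its own, and as written it is a programme rather than a proof. The qualitative rigidity step is sound (a nonzero measure supported on an affine subspace $x_0+P$ with $\dim P<d-a(\Omega)$ would force $\hat\mu(\xi_P)\in\bigcap_{\eta\in P^\perp\cap S^{d-1}}\Omega(\eta)=\{0\}$, using that membership in the line $\Omega$ passes to Grassmannian limits and that translation only introduces a scalar factor), but everything downstream rests on the ``quantitative flatness lemma'', which you introduce with ``should produce'' and never establish. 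That lemma is not a technical footnote: it is exactly the missing ingredient that makes the Euclidean statement a conjecture while its martingale analog is a theorem.

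The concrete obstruction is nonlocality. The martingale proof of Lemma~\ref{InequalityForNon-Intensive2} works because membership in $\W$ is a local, scale-by-scale condition: for every atom $\omega$ the increment $f_{n+1}|_\omega$ lies in the fixed finite-dimensional set $\J_\omega[W]$, so flatness plus compactness in $V^\ell$ produces a limiting rank-one vector $v\otimes a\in W$ and a contradiction. For $\mu\in\mathfrak{BV}$ the constraint $\hat\mu(\xi)\in\Omega(\xi/|\xi|)$ imposes no exact condition on the restriction of a single dyadic increment $\E_{n+1}\mu-\E_n\mu$ to one cube; a blow-up at a flat cube recovers the constraint only asymptotically, with errors from low frequencies, from the variation of $\Omega$ over the relevant cone of directions, and from the part of $\mu$ outside the cube, and these errors must be controlled uniformly and summably along every flat tree for the Lemma~\ref{Inductive}-type iteration and the Frostman testing argument to survive. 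This is precisely the step you defer in your last paragraph, so the claim that ``the scheme of Sections~3--4 transfers without change'' is not justified: even the definition of a flat cube and the one-step $L_p$ inequality require a genuinely new localization/tangent-measure argument. A smaller slip: with your dyadic normalization, reaching dimension $\beta\to d-a(\Omega)$ requires $\kappa_\Omega(p^{-1})<\frac{p-1}{p}(d-\beta)\log 2\to\frac{p-1}{p}\,a(\Omega)\log 2$, not $\kappa_\Omega(p^{-1})<(d-a(\Omega))\frac{p-1}{p}\log 2$ as stated.
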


\section{Analogs of trace theorems}
We have studied the~$L_p$-regularity properties of~$\I_{\alpha}[F]$, where~$F\in \W$. What about its continuity properties? Say, when are we able to define the trace of~$\I_{\alpha}[F]$ on a set~$E$ of fractional dimension? As usual, we will find the inspiration in the classical real-variable case. It is known (see, e.g.~\cite{Mazja}) that~$\dot{W}_1^1(\mathbb{R}^d)$ embeds into~$L_p(\nu)$ if and only if~$\nu(B_r(x))^{\frac{1}{p}} \lesssim r^{d-1}$ for any ball~$B_r(x)$ (the symbol~$B_r(x)$ denotes the Euclidean ball of radius~$r$ centered at~$x$). In particular, a Sobolev function with~$L_1$ derivatives has meaningful traces on~$(d-1)$-dimensional surfaces. We signalize that~$\I_{1}[\nabla f]$ does not necessarily have a meaningful trace on a hyperplane. In fact, this is the first time when our statements really ``feel'' the difference between~$\I_1[\nabla f]$ and~$f$.

\begin{Def}
	We say that a measure~$\nu$ with bounded variation on~$\mathbb{T}$ satisfies the~$(\alpha,p)$ Frostman condition provided
	\begin{equation}\label{Frostman}
	\forall n \geq 0\quad \forall \omega \in \AF_n \quad (\nu(\omega))^{\frac{1}{p}} \lesssim m^{(\alpha - 1)n}.
	\end{equation}
\end{Def}
\begin{Rem}
	The~$(\alpha,p)$ Frostman condition is necessary for the continuity of the mapping~$\I_{\alpha}\colon \W \to L_p(\nu)$.
\end{Rem}
The theorem below is a straightforward generalization of Theorem~\ref{vanSchaftingen}, so we leave its proof to the reader.
\begin{Th}\label{TraceTheoremTrivial}
	Let~$p > 1$. If~$\nu$ satisfies the~$(\alpha,p)$ Frostman condition and~$W$ satisfies the second structural assumption, then the mapping~$\I_{\alpha}\colon \W \to L_p(\nu)$ is continuous.
\end{Th}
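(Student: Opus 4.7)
The plan is to mimic the proof of inequality~\eqref{MainInequality} verbatim, replacing the $L_{p,1}$ norms by $L_p(\nu)$ norms and substituting Lemma~\ref{LocalEmbedding2} by its Frostman analog. Since $p>1$, the space $L_p(\nu)$ is normed, so
$$\|\I_\alpha[F]\|_{L_p(\nu)} \leq \sum_{n\geq 1} m^{-\alpha n}\|f_n\|_{L_p(\nu)},$$
and splitting $F=F_\Co+F_\Fl$ via~\eqref{ConvexAndFlat} reduces the claim to bounding the convex and flat parts separately.

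The key input is a Frostman-based local embedding: for any $\F_N$-measurable vector-valued function $g$, Minkowski combined with~\eqref{Frostman} yields
$$\|g\|_{L_p(\nu)} \leq \sum_{\omega'\in\AF_N}\|g(\omega')\|\,\nu(\omega')^{1/p} \lesssim m^{(\alpha-1)N}\sum_{\omega'\in\AF_N}\|g(\omega')\| = m^{\alpha N}\|g\|_{L_1},$$
and the same computation via $\sum_{\omega'}\|g(\omega')\|^p = m^N\|g\|_{L_p}^p$ gives the companion bound $\|g\|_{L_p(\nu)}\lesssim m^{\alpha N-(1-1/p)N}\|g\|_{L_p}$. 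Applying the first bound with $N=n+1$ to each convex block and invoking Corollary~\ref{ToBesov} controls
$$\sum_{n\geq 0} m^{-\alpha n}\Bigl\|\sum_{\omega\in\Co\cap\AF_n}f_{n+1}\chi_\omega\Bigr\|_{L_p(\nu)}\lesssim \|F_{\Co}\|_{B_1^{0,1}}\lesssim \|F\|_{L_1}.$$

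For the flat part I would decompose $F_\Fl$ into trees via~\eqref{SplittingIntoTrees}. For a tree $\T$ with root $\omega_0\in\AF_{n_0}$, combining the second Frostman embedding with Lemma~\ref{Inductive} and the identity $\|F_{n_0}\|_{L_p(\omega_0)}=m^{n_0(1-1/p)}\,\E\|F_{n_0}\|\chi_{\omega_0}$ yields
$$m^{-\alpha n}\Bigl\|\sum_{\omega\in\T\cap\AF_n}f_{n+1}\chi_\omega\Bigr\|_{L_p(\nu)}\lesssim m^{-(1-1/p)(n-n_0)}e^{\alpha'(n-n_0)}\,\E\|F_{n_0}\|\chi_{\omega_0}$$
for any $\alpha'>\kappa(p^{-1})$. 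By Remark~\ref{StrictInequality}, the second structural assumption permits $\alpha'<(1-1/p)\log m$, so the geometric series in $n\geq n_0$ converges, leaving $\lesssim\E\|F_{n_0}\|\chi_{\omega_0}$. Summing over trees and reusing the final step of the proof of~\eqref{MainInequality}---the parent of each root is convex, appears at most $m$ times, and is controlled via Lemma~\ref{TriangleInequality} and~\eqref{StepwiseSplitting}---closes the bound by $\|F\|_{L_1}$.

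The only genuinely new ingredient is the local embedding above; no new combinatorics or linear algebra is required. In particular, the interpolation trick from Corollary~\ref{SummationOverTree}, needed there to land in $L_{p,1}$, is superfluous here because $L_p(\nu)$ is itself the target norm. The only bookkeeping care is in tracking the scale factors $m^{\alpha N}$ and $m^{-(1-1/p)N}$ so that the convex and flat pieces produce the correct cancelling exponents; everything else transcribes.
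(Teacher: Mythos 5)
Your proposal is correct and is exactly the ``straightforward generalization of Theorem~\ref{vanSchaftingen}'' that the paper leaves to the reader: the convex/flat splitting, Corollary~\ref{ToBesov}, Lemma~\ref{Inductive} with Remark~\ref{StrictInequality} to make the geometric series converge, and the final summation over tree roots all transcribe as you describe, with your Frostman local embeddings $\|g\|_{L_p(\nu)}\lesssim m^{\alpha N}\|g\|_{L_1}$ and $\|g\|_{L_p(\nu)}\lesssim m^{\alpha N-(1-1/p)N}\|g\|_{L_p}$ correctly replacing Lemma~\ref{LocalEmbedding2}. Your observation that the Lorentz interpolation of Corollary~\ref{SummationOverTree} is unnecessary here is also right; the same Frostman factor is handled in the paper's proof of the harder $p=1$ case (Theorem~\ref{TraceTheorem}) by the equivalent device of H\"older against the $L_{p/(p-1)}$ norm of the martingale representing $\nu$.
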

The limiting case~$p=1$ is more involved. 
\begin{Th}\label{TraceTheorem}
	Assume~$\nu$ satisfies the~$(\alpha,1)$ Frostman condition and
	\begin{equation}\label{LowerBoundForAlpha}
	\alpha > -\frac{\kappa'(1)}{\log m}.
	\end{equation}
	Then, the mapping~$\I_{\alpha}\colon \W \to L_1(\nu)$ is continuous.
\end{Th}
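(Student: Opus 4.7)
My plan is to imitate the convex/flat decomposition from the proofs of Theorems~\ref{vanSchaftingen} and~\ref{Uncertainty}. Pick $\eps$ small (to be specified later), decompose $F = F_{\Co} + F_{\Fl}$ according to~\eqref{ConvexAndFlat}, and split the flat part into a disjoint union $F_{\Fl} = \sum_j F_{\T_j}$ of maximal trees of flat atoms with roots $\omega_j \in \AF_{n_j}$, as in~\eqref{SplittingIntoTrees}. The estimate $\|I_{\alpha}[F]\|_{L_1(\nu)} \lesssim \|F\|_{L_1}$ will then follow by combining an easy bound for the convex part with a tree-by-tree bound for the flat part.

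For the convex part, the Frostman condition $\nu(\omega') \lesssim m^{(\alpha-1)(n+1)}$ for every atom $\omega' \in \AF_{n+1}$ yields, for any $\F_{n+1}$-measurable function $g$,
\begin{equation*}
\|g\|_{L_1(\nu)} = \sum\limits_{\omega'\in\AF_{n+1}}|g(\omega')|\,\nu(\omega') \lesssim m^{\alpha(n+1)}\|g\|_{L_1}.
\end{equation*}
Applied to $g = f_{n+1,\Co}$, weighted by $m^{-\alpha n}$ and summed over $n$, this combines with Corollary~\ref{ToBesov} to give $\sum_n m^{-\alpha n}\|f_{n+1,\Co}\|_{L_1(\nu)} \lesssim \|F\|_{L_1}$.

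For a single flat tree $\T$ with root $\omega_0 \in \AF_{n_0}$, I would use H\"older's inequality in the form
\begin{equation*}
\int |f_{n+1,\T}|\, d\nu \leq \|f_{n+1,\T}\|_{L_p(\nu)}\, \nu(\omega_0)^{1/p'}
\end{equation*}
for some $p > 1$ to be chosen. The same Frostman conversion (applied to $p$-th powers) gives $\|f_{n+1,\T}\|_{L_p(\nu)} \lesssim m^{\alpha(n+1)/p}\|f_{n+1,\T}\|_{L_p}$, while Lemma~\ref{Inductive} bounds the Lebesgue $L_p$ norm by $e^{\alpha'(n-n_0)}\|F_{n_0}\|_{L_p(\omega_0)}$ for any $\alpha' > \kappa(p^{-1})$. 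Using $\|F_{n_0}\|_{L_p(\omega_0)} = m^{n_0/p'}\E\|F_{n_0}\|\chi_{\omega_0}$ and $\nu(\omega_0)^{1/p'} \lesssim m^{(\alpha-1)n_0/p'}$, the exponents collapse neatly to
\begin{equation*}
m^{-\alpha n}\int |f_{n+1,\T}|\, d\nu \lesssim m^{\alpha/p}\,e^{(n-n_0)(\alpha' - \alpha\log m/p')}\,\E\|F_{n_0}\|\chi_{\omega_0}.
\end{equation*}
The sum over $n \geq n_0$ converges provided $\alpha' < \alpha\log m/p'$; equivalently, one needs some $p>1$ with $\frac{p\kappa(1/p)}{p-1} < \alpha\log m$. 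Since $\kappa(1) = 0$, a direct computation gives $\lim_{p\to 1^+}\frac{p\kappa(1/p)}{p-1} = -\kappa'(1)$, so such $p$ exists precisely under the hypothesis~\eqref{LowerBoundForAlpha}. This is the decisive input of the theorem's assumption.

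To finish, I would sum the per-tree estimates $\sum_{n\geq n_j} m^{-\alpha n}\int|f_{n+1,\T_j}|\,d\nu \lesssim \E\|F_{n_j}\|\chi_{\omega_j}$ over $j$; as in the last part of the proof of Theorem~\ref{vanSchaftingen}, each parent $(\omega_j)^\uparrow$ is convex by maximality of $\T_j$, so Lemma~\ref{TriangleInequality} and~\eqref{StepwiseSplitting} give $\sum_j \E\|F_{n_j}\|\chi_{\omega_j} \lesssim \|F\|_{L_1}$. The main obstacle is the exponent bookkeeping in the tree estimate and the verification that the geometric-series convergence condition matches~\eqref{LowerBoundForAlpha} exactly through the derivative of $\kappa$ at $1$; once $p$ is fixed near $1$ to ensure convergence, $\eps$ is taken small enough for Lemma~\ref{Inductive} to deliver $\alpha'$ arbitrarily close to $\kappa(p^{-1})$.
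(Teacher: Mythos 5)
Your proposal is correct and follows essentially the same route as the paper: the same convex/flat decomposition, the same per-tree estimate via H\"older and Lemma~\ref{Inductive}, and the same choice of $p$ close to $1$ so that the geometric series converges exactly under~\eqref{LowerBoundForAlpha} via $\lim_{p\to 1^+}\frac{p\kappa(1/p)}{p-1}=-\kappa'(1)$. The only (cosmetic) difference is that you apply H\"older directly with respect to $\nu$ and convert $L_p(\nu)$ to $L_p$ atomwise by the Frostman condition, whereas the paper pairs with the representing martingale $N_n$ and interpolates its $L_\infty$ and $L_1$ bounds; the exponent bookkeeping is identical.
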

It seems that the limit case of equality in~\eqref{LowerBoundForAlpha} is delicate in the already mentioned sense: maybe, we can replace the positive operator~$\I_{\alpha}$ by an operator of the same order that has more cancellations to have the continuity.

We are able to prove sharpness of~\eqref{LowerBoundForAlpha} only in a very special case where the function~$\kappa$ is linear (note that the classical Sobolev spaces satisfy this condition).
\begin{Le}\label{SufficiencyForTrace}
	Let the function~$\kappa$ be linear on the interval~$[0,1]$. If the mapping~$\I_{\alpha}\colon \W \to L_1(\nu)$ is continuous, then~\eqref{LowerBoundForAlpha}.
	\end{Le}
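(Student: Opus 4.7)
The plan is to prove the contrapositive at the critical exponent $\alpha = \alpha_0 := -\kappa'(1)/\log m$: I exhibit a measure $\nu$ satisfying the $(\alpha_0,1)$ Frostman condition together with a martingale $F \in \W$ for which $\I_{\alpha_0}[F] \notin L_1(\nu)$. This shows the strict inequality in~\eqref{LowerBoundForAlpha} cannot be relaxed to $\alpha \geq \alpha_0$, which is the substance of sharpness.

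First, I would extract the form of an extremal vector. Set $c := -\kappa'(1)$, and let $v^*$ attain the supremum in~\eqref{FormulaForDerivativeOfKappa} (attainment follows from compactness of the feasible set in $V$). Then $\kappa_{v^*}'(1) = -c$, so by convexity of $\kappa_{v^*}$ and $\kappa_{v^*}(1)=0$ the tangent-line bound gives $\kappa_{v^*}(\theta) \geq c(1-\theta) = \kappa(\theta)$ on $[0,1]$, while $\kappa_{v^*} \leq \kappa$ holds by definition; hence $\kappa_{v^*} \equiv \kappa$ on $[0,1]$. Writing $u_j^* = 1+v_j^*$, this reads $\tfrac{1}{m}\sum_j (u_j^*)^p = e^{c(p-1)}$ for every $p \geq 1$. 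Since the right side is a single exponential in $p$, the values $u_j^*$ may take at most one nonzero value; combined with $\sum_j u_j^* = m$ this forces
\begin{equation*}
v_j^* = \tfrac{m-k}{k} \ (j \in S), \qquad v_j^* = -1 \ (j \notin S),
\end{equation*}
where $|S| = k = me^{-c} = m^{1-\alpha_0}$ is therefore an integer in $\{1,\ldots,m-1\}$. Pick $a \in \mathbb{R}^\ell \setminus \{0\}$ with $v^* \otimes a \in W$.

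Following the template of Lemma~\ref{CounterExampleToHLS}, set $F_n = a \prod_{i=1}^n (1 + h_i)$, where $h_i$ takes value $v_j^*$ on the $j$-th child. Since $1 + v_j^*$ is $m/k$ on $S$ and $0$ off $S$, a direct calculation gives $F_n = a(m/k)^n \chi_{T_n}$, with $T_n = \{x : c(x,i) \in S \ \text{for all}\ i \leq n\}$; in particular $F \in \W$ and $\|F_n\|_{L_1} = \|a\|$ is independent of $n$. Let $\nu$ be the weak-$*$ limit of $F_n\,dx$: a positive measure of total mass $\|a\|$ supported on $T_\infty = \bigcap_n T_n$. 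One checks $\nu(\omega) = \|a\|k^{-n} = \|a\|m^{(\alpha_0-1)n}$ whenever $\omega \in \AF_n$ sits inside $T_n$, and zero otherwise, so $\nu$ satisfies the $(\alpha_0,1)$ Frostman condition.

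Finally, for $x \in T_\infty$ one has $f_n(x) = a(m/k)^{n-1}(m-k)/k$, and using $m/k = m^{\alpha_0}$,
\begin{equation*}
m^{-\alpha_0 n} f_n(x) = a\,\frac{m-k}{m}, \quad \text{independent of } n.
\end{equation*}
Hence $(\I_{\alpha_0}F)_N(x) = N(m-k)/m \cdot a$ on $T_\infty$, giving
\begin{equation*}
\|(\I_{\alpha_0} F)_N\|_{L_1(\nu)} \geq \frac{N\|a\|(m-k)}{m}\,\nu(T_\infty) \xrightarrow{N \to \infty} \infty,
\end{equation*}
while $\|F_N\|_{L_1} = \|a\|$ stays bounded -- contradicting continuity of $\I_{\alpha_0}\colon \W \to L_1(\nu)$. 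The main obstacle is the first step: the tangent/convexity reduction and the ``single exponential'' uniqueness argument both rely essentially on linearity of $\kappa$, and rigorously confirming attainment of the supremum (which in turn forces integrality of $k$) is where the bulk of the work lies; the remainder of the proof is a direct computation.
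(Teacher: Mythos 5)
Your endpoint computation is correct, and it rests on a structural observation the paper does not use: linearity of~$\kappa$ plus attainment of the infimum in~\eqref{FormulaForDerivativeOfKappa} forces the extremizer~$v^*$ to equal~$m/k-1$ on a~$k$-element set and~$-1$ elsewhere with~$k=me^{\kappa'(1)}$ an integer, so that~$F$ is a uniformly distributed Cantor-type martingale and~$m^{-\alpha_0 n}f_n$ is literally constant on its support ($\alpha_0:=-\kappa'(1)/\log m$). The paper takes a different, softer route: it picks the extremizer of~$\kappa(\tfrac12)$, never identifies it explicitly, and tests~$\I_{\alpha}[F]$ against a second martingale through the orthogonality identity~$\E\,\I_{\alpha}[F]\,\I_{\gamma}[F]=\sum_n m^{-(\alpha+\gamma)n}\E|f_n|^2$, using only the relation~$2\kappa(\tfrac12)=\kappa(0)=-\kappa'(1)$ supplied by linearity.

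There is, however, a genuine gap in the range of~$\alpha$ you cover. The lemma asserts that continuity forces~$\alpha>\alpha_0$, so for \emph{every}~$\alpha\le\alpha_0$ one must produce a measure satisfying the~$(\alpha,1)$ Frostman condition on which~$\I_{\alpha}$ is unbounded. Your~$\nu$ has~$\nu(\omega)\asymp m^{(\alpha_0-1)n}$ on the atoms meeting~$T_\infty$, so it satisfies the~$(\alpha_0,1)$ condition but \emph{not} the~$(\alpha,1)$ condition for~$\alpha<\alpha_0$, which demands the smaller bound~$m^{(\alpha-1)n}$; hence your argument says nothing below the endpoint. The paper closes this by smoothing the test measure: it sets~$\nu=F_0+\I_{\gamma}[F]$ with~$\gamma=\alpha_0-\alpha\ge 0$, verifies~$\|N_{n_0}\|_{L_\infty}\lesssim m^{(\alpha_0-\gamma)n_0}$ so that~$\nu$ is~$(\alpha,1)$-Frostman, and observes that the pairing~$\E\,\I_{\alpha}[F]\,\I_{\gamma}[F]=\sum_n m^{-\alpha_0 n}\E|f_n|^2$ still diverges termwise. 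Your construction is exactly the case~$\gamma=0$ of this family; to complete the proof you need the cases~$\gamma>0$, and since the smoothed~$\nu$ is no longer carried by~$T_\infty$, the pointwise evaluation on the support would have to be replaced by the duality pairing in any event.
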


\begin{proof}[Proof of Theorem~\ref{TraceTheorem}]
	The proof follows the lines of the proof of Theorem~\ref{vanSchaftingen} with slight modifications. As usual, we will estimate~$\|\I_{\alpha}[F_{\Co}]\|_{L_1(\nu)}$ and~$\|\I_\alpha[F_{\Fl}]\|_{L_1(\nu)}$ separately. Here is the estimate for the convex part:
	\begin{multline*}
	\|\I_\alpha[F_{\Co}]\|_{L_1(\nu)} \leq \sum\limits_{n\geq 0}m^{-\alpha n}\|(f_{\Co})_{n+1}\|_{L_1(\nu)} = \sum\limits_{n\geq 0}m^{-\alpha n}\Big(\sum\limits_{\omega \in \Co\cap \AF_n}|f_{n+1}|_{\omega}|\nu(\omega)\Big) \stackrel{\scriptscriptstyle{\eqref{Frostman}}}{\leq}\\ \sum\limits_{n\geq 0}m^{-n}\Big(\sum\limits_{\omega \in \Co\cap \AF_n}|f_{n+1}|_{\omega}|\Big) \stackrel{\hbox{\tiny Cor.~\ref{ToBesov}}}{\leq} \|F\|_{L_1}.
	\end{multline*}
	Similar to the proof of Theorem~\ref{vanSchaftingen}, the bound
	\begin{equation}\label{IndividualTree}
	\|\I_{\alpha}[F_{\T}]\|_{L_1(\nu)} \lesssim m^{-n_0}|F_{n_0}(\omega_0)|
	\end{equation}
	for any tree of flat atoms~$\T$ with the root~$\omega_0 \in \AF_{n_0}$, leads to the wanted estimate for the flat part. So, it remains to prove~\eqref{IndividualTree}.
	
	Let~$\{N_n\}_n$ be the martingale that represents~$\nu$. We interpolate the~$(\alpha,1)$-Frostman condition
	\begin{equation*}
	\|N_n\|_{L_{\infty}} \lesssim m^{\alpha n}
	\end{equation*} 
	with the trivial bound~$\|N_n\|_{L_1(\omega_0)}\lesssim m^{(\alpha - 1)n_0}$ to get
	\begin{equation}\label{InterpolatoryEstimateForMeasure}
	\|N_n\|_{L_{\frac{p}{p-1}}(\omega_0)} \lesssim m^{\frac{p-1}{p}(\alpha -1)n_0 + \frac{\alpha n}{p}}.
	\end{equation}
	
	Now we can prove~\eqref{IndividualTree}:
	\begin{multline*}
	\|\I\alpha[F_{\T}]\|_{L_1(\nu)} \leq \sum\limits_{n \geq n_0}m^{-\alpha n}\sum\limits_{\omega \in \T\cap \AF_n}|f_{n+1}|_\omega|\nu(\omega) =\\  
	\sum\limits_{n \geq n_0}m^{-\alpha n}\E\Big(\sum\limits_{\omega \in \T\cap \AF_n}|f_{n+1}|_\omega|\chi_{\omega}N_n\Big) \leq \\ 
	\sum\limits_{n \geq n_0}m^{-\alpha n}\Big\|\sum\limits_{\omega \in \T\cap \AF_n}|f_{n+1}|_\omega|\chi_{\omega}\Big\|_{L_p(\omega_0)}\|N_n\|_{L_{\frac{p}{p-1}}(\omega_0)} \stackrel{\stackrel{\hbox{\tiny Lem.~\ref{Inductive},}}{\scriptscriptstyle \beta > \kappa(p^{-1})}}{\lesssim}\!\!\!\!\!\!\!\!_{\eps}  \\ 
	\sum\limits_{n\geq n_0} m^{-\alpha n} e^{\beta(n-n_0)}\|F_{n_0}\|_{L_p(\omega_0)} \|N_n\|_{L_{\frac{p}{p-1}}(\omega_0)} \stackrel{\scriptscriptstyle \eqref{InterpolatoryEstimateForMeasure}}{\lesssim}\\  \sum\limits_{n\geq n_0} m^{-\alpha n} e^{\beta(n-n_0)}\|F_{n_0}\|_{L_p(\omega_0)} m^{\frac{p-1}{p}(\alpha -1)n_0 + \frac{\alpha n}{p}} \lesssim \\  m^{-(1-\frac{1}{p})n_0}\|F_0\|_{L_p(\omega_0)}\sum\limits_{n\geq n_0} m^{-(1-\frac{1}{p})\alpha(n-n_0)}e^{\beta(n-n_0)}.
	\end{multline*} 
	In order for the geometric series to converge, we should find~$\beta > \kappa(p^{-1})$ such that~$\beta < (1-\frac{1}{p})\alpha \log m$ or, what is the same,
	\begin{equation*}
	\alpha > \frac{\kappa(p^{-1})}{(1-\frac{1}{p})\log m}.
	\end{equation*}
	For sufficiently small~$p$, this follows from~\eqref{LowerBoundForAlpha}.
\end{proof}

\begin{proof}[Proof of Lemma~\ref{SufficiencyForTrace}]
	Let~$v \in V$ be such that
	\begin{equation*}
	\Big(\frac{1}{m}\sum\limits_{j=1}^m |1+v_j|^2\Big)^\frac12 = e^{\kappa(\frac12)},
	\end{equation*}
	there exists non-zero~$a\in\mathbb{R}^\ell$ such that~$v\otimes a \in W$ and~$v_j \geq -1$ for all~$j$. We construct the multiplicative martingale~$F$ similar to the one used in the proof of Lemma~\ref{CounterExampleToHLS}:
	\begin{equation*}
	F_n = \prod_{j \leq n}(1+h_n), \quad h_n = \sum\limits_{\omega \in \AF_n}\J_\omega[v]\chi_{\omega}.
	\end{equation*}
	We define the measure~$\nu$ by the formula
	\begin{equation*}
	\nu = F_0+\I_{\gamma}[F],
	\end{equation*}
	where~$\gamma$ is a non-negative number to be chosen later. Since~$F \geq 0$,~$\nu$ is also non-negative. Let~$\{N_n\}_n$ be the martingale that represents~$\nu$. Then,
	\begin{equation*}
	\begin{split}
	\|N_{n_0}\|_{L_{\infty}} \leq \sum\limits_{n < n_0} \|m^{-\gamma n}f_{n+1}\|_{L_{\infty}} \leq \sum\limits_{n < n_0} m^{-\gamma n}e^{n\kappa(0)} = \sum\limits_{n < n_0} e^{n(\kappa(0) - \gamma\log m)} \\ \lesssim m^{(\frac{\kappa(0)}{\log m} - \gamma)n_0},
	\end{split}
	\end{equation*}
	provided~$\gamma < \frac{\kappa(0)}{\log m}$. Thus,~$\nu$ satisfies~$(\alpha,1)$-Frostman condition with~$\alpha = \frac{\kappa(0)}{\log m} - \gamma$. 
	
	Now we estimate~$\|\I_{\alpha} [F]\|_{L_1(\nu)}$ from below:
	\begin{equation*}
	\begin{split}
	\|\I_{\alpha} [F]\|_{L_1(\nu)} \gtrsim \E \I_{\alpha} [F] I_{\gamma}[F] = \sum\limits_{n >0}m^{-(\alpha + \gamma)n}\E|f_n|^2 \asymp 
	\sum\limits_{n > 0}m^{-(\alpha +\gamma)n}e^{2\kappa(\frac12)} \\ = \sum\limits_{n > 0}e^{2\kappa(\frac12) - (\alpha+\gamma)\log m}.
	\end{split}
	\end{equation*}
	Since~$2\kappa(\frac12) = \kappa(0)$, this series diverges, and~$\|\I_{\alpha} [F]\|_{L_1(\nu)}$ is infinite. We note that~$F\otimes a \in \W$, and so,~$\I_{\alpha} \colon \W \to L_1(\nu)$ is not bounded. Since~$\kappa(0) = -\kappa'(1)$, we may take any~$\alpha \in (0,-\frac{\kappa'(1)}{\log m}]$.
\end{proof}

Rami Ayoush

Institute of Mathematics, Polish Academy of Sciences.

rayoush@impan.pl.

\medskip

Dmitriy M. Stolyarov

St. Petersburg State University, 14th line 29B, Vasilyevsky Island, St. Petersburg, Russia.

St. Petersburg Department of Steklov Mathematical Institute, Russian Academy of Sciences.

d.m.stolyarov@spbu.ru.

\medskip

Michal Wojciechowski

Institute of Mathematics, Polish Academy of Sciences.

miwoj.impan@gmail.com.

\end{document}